\tikzset{taar/.style={double, double equal sign distance, -implies}}
\tikzset{amar/.style={->, dotted}}
\tikzset{dmar/.style={->, dashed}}
\tikzset{aar/.style={->, very thick}}
\newtheorem{theorem}{Theorem}[section]
\newtheorem{lemma}[theorem]{Lemma}
\newtheorem{proposition}[theorem]{Proposition}
\theoremstyle{definition}
\theoremstyle{remark}
\newtheorem{remark}[theorem]{Remark}
\newtheorem{example}[theorem]{Example}
\newcommand{\alphas}{\boldsymbol{\alpha}}
\newcommand{\betas}{\boldsymbol{\beta}}
\def\parmod{\pmod}
\def\F{\mathbb{F}}
\def\Z{\mathbb{Z}}
\def\del{\partial}
\def\Sym{\mathrm{Sym}}
\def\CFKi{\CFK^{\infty}}
\def\Im{\operatorname{Im}}
\def \gr {\operatorname{gr}}
\def\Cone{\operatorname{Cone}}
\def\co{\colon}
\def\Cone{\operatorname{Cone}}
\newcommand{\sunderline}[1]{\underline{#1\mkern-3mu}\mkern3mu }
\def\Vu {\overline{V}}
\def\Vl {\sunderline{V}}
\def\HF {\mathit{HF}}
\def\HFK {\mathit{HFK}}
\newcommand\HFKhat{\widehat{\HFK}}
\newcommand \HFm {\HF^-}
\def\CFK{\mathit{CFK}}
\newcommand{\real}{\tilde{\mathfrak{I}}} 
\newcommand{\Id}{\operatorname{Id}}
\author[K. Hendricks]{Kristen Hendricks}
\email{kristen.hendricks@rutgers.edu}
\thanks{KH was partially supported by NSF CAREER Grant DMS-2019396 and a Sloan Research Fellowship.}
\author[M. Issac]{Matthew Issac}
\email{mi213@scarletmail.rutgers.edu}
\thanks{MI was partially supported by NSF CAREER Grant DMS-2019396.}
\author[N. McConnell]{Nicholas McConnell}
\email{ncm78@scarletmail.rutgers.edu}
\thanks{NM was partially supported by NSF CAREER Grant DMS-2019396.}
\numberwithin{equation}{section}
\title{A note on the involutive invariants of certain pretzel knots}
\begin{document}
\maketitle
\begin{abstract} We compute the involutive knot invariants for pretzel knots of the form $P(-2,m,n)$ for $m\geq n \geq 3$. \end{abstract}
\tableofcontents
\section{Introduction}

Heegaard Floer homology is a collection of invariants of three-manifolds and knots and links within them introduced by Ozsv{\'a}th and Szab{\'o} \cite{OS3manifolds1,OS3manifolds2,OSknots} and in the knot case independently by Rasmussen \cite{RasmussenThesis} in the early 2000s. The knot version associates to a knot $K \subseteq S^3$ a $(\Z\oplus\Z)$-filtered, $\Z$-graded chain complex over $\F[U,U^{-1}]$ called $\CFK^{\infty}(K)$. This chain complex recovers the data of the classical Alexander polynomial \cite{OSknots} and detects the knot genus \cite{OS:four} and whether the knot is fibred \cite{Ghigginifibred, Nifibred}. Furthermore, a plethora of interesting invariants of knot concordance have been extracted from it \cite{OS:four, OSinteger, Rasmussenh, Homconcordance, OSS}.

In 2015, Hendricks and Manolescu \cite{HM:involutive} introduced a refinement of Heegaard Floer homology called involutive Heegaard Floer homology, which incorporates the data of a conjugation symmetry on the Heegaard Floer chain complexes. In the knot case, this takes the form of a skew-filtered automorphism
\[ \iota_K \co \CFK^{\infty}(K) \rightarrow \CFK^{\infty}(K)\]
\noindent which is order four up to filtered chain homotopy. From this additional data, they construct two new concordance invariants $\Vl_0(K)$ and $\Vu_0(K)$, which are analogs of a concordance invariant $V_0(K)$ from the non-involutive setting \cite{OSinteger, Rasmussenh,Peters}. These invariants are particularly interesting in that, unlike many concordance invariants from Heegaard Floer homology and the related knot homology theory Khovanov homology, they can take nonzero values on knots of finite concordance order. For example, $\Vl_0(4_1)=1$; the invariants therefore detect the nonsliceness of the figure eight knot. The authors give combinatorial computations of the involutive concordance invariants for $L$-space knots (which include the torus knots) and thin knots (which include alternating and quasi-alternating knots).

Following a similar strategy to Hendricks and Manolescu's computation for thin knots \cite[Section 8]{HM:involutive}, in this note we compute the involutive concordance invariants of $P(-2,m,n)$ pretzel knots for $m$ and $n$ odd, and their mirrors. The complexes $\CFK^{\infty}(P(-2,m,n))$ associated to these knots were computed by Goda, Matsuda, and Morifuji \cite{GMM}. The reader should compare the statement of our result below with \cite[Proposition 8.2]{HM:involutive}. We include the values of the ordinary concordance invariant $V_0(K)$ in the statement for ease of comparison.

\begin{theorem} \label{thm:main} Let $m,n$ be odd numbers such that $m\geq n\geq 3$. The involutive knot concordance invariants of the pretzel knots $K = P(-2,m,n)$  are as follows.

\begin{itemize}
\item If $m \not\equiv n \parmod{4}$, then $V_0(K) = \Vl_0(K) = 0$ and $\Vu_0(K) = -\frac{m+n}{4}$.
\item If $m \equiv n \parmod{4}$, then $V_0(K) = \Vl_0(K) = 0$ and $\Vu_0(K) = -\frac{m+n-2}{4}$.
\end{itemize}

\noindent Moreover, the involutive knot concordance invariants of the mirrors $\overline{K}=P(2,-m,-n)$ are as follows.

\begin{itemize}

\item If $m \not\equiv n \parmod{4}$, then $V_0(\overline{K}) = \Vl_0(\overline{K}) = \Vu_0(\overline{K}) = \frac{m+n}{4}$.
\item If $m \equiv n \equiv 3 \parmod{4}$, then $V_0(\overline{K}) = \Vl_0(\overline{K}) = \Vu_0(\overline{K}) = \frac{m+n-2}{4}$.
\item If $m \equiv n \equiv 1 \parmod{4}$, then $ \Vl_0(\overline{K}) = \frac{m+n+2}{4}$ and $V_0(\overline{K}) = \Vu_0(\overline{K}) = \frac{m+n-2}{4}$.
\end{itemize}
\end{theorem}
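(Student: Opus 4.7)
The plan is to adapt the strategy of Hendricks-Manolescu's thin-knot computation \cite[Section 8]{HM:involutive} to the explicit model of $\CFKi(P(-2,m,n))$ computed by Goda-Matsuda-Morifuji \cite{GMM}. The argument breaks into three steps: record the GMM complex with its bigrading, identify the conjugation involution $\iota_K$ up to filtered chain homotopy, and then read off $\Vl_0$ and $\Vu_0$ from the resulting involutive subcomplex.

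First I would tabulate the bigraded generators of $\CFKi(P(-2,m,n))$ and the horizontal and vertical arrows recorded in \cite{GMM}, producing a direct-sum decomposition into a staircase-type summand (governing the $U$-tower in $\HFm$) and collections of square or ``box'' summands (contributing only to $\HFKhat$). The parity distinction $m\equiv n$ versus $m\not\equiv n \pmod{4}$ enters as a Maslov-grading shift on a distinguished generator, and the vanishing $V_0(K)=0$ falls out immediately from the location of the generator that survives in the $U$-tower.

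The main obstacle is the identification of $\iota_K$ on this model, since \cite{GMM} does not come equipped with the conjugation symmetry. The plan is to use rigidity: $\iota_K$ must be a skew-filtered, skew-graded chain automorphism of order four up to homotopy, and on the GMM complex there are only finitely many homotopy classes of maps with the correct bigrading shift. I expect these constraints, combined with the $(m,n)$-tangle swap symmetry of the pretzel diagram, to reduce the possibilities to a small set, with any remaining ambiguity not affecting $\Vl_0$ or $\Vu_0$. Concretely, $\iota_K$ should be forced to exchange a specific pair of endpoint generators of the staircase in a manner controlled by the parities above, analogously to the thin case.

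Once $\iota_K$ is pinned down, $\Vl_0(K)$ and $\Vu_0(K)$ are extracted by a direct computation in the $\iota_K$-twisted subquotient at Alexander level zero, exactly as in \cite[Section 8]{HM:involutive}; the two subcases of the theorem come from the Maslov-grading shift noted above. The mirror $\overline{K} = P(2,-m,-n)$ is then handled via the standard identifications $\CFKi(\overline{K}) \simeq \CFKi(K)^{*}$ and $\iota_{\overline{K}} \simeq \iota_K^{*}$, together with the duality relations between $V_0, \Vl_0, \Vu_0$ and their mirror counterparts. The split-off case $m \equiv n \equiv 1 \pmod{4}$ for the mirror should reflect that, only in that congruence, the dualized involution acts nontrivially on the generator that would otherwise realize $\Vl_0(\overline{K}) = V_0(\overline{K})$, producing the extra $+1$ shift in the formula for $\Vl_0$.
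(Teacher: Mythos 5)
Your overall plan — read off the GMM complex, decompose it, pin down $\iota_K$ by rigidity, compute the invariants, then handle the mirror by dualizing — matches the paper's strategy. But two of those steps, which you treat as routine, are actually where essentially all of the content of the proof lives, and as written your proposal would not get through them.

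The first and most serious gap is the rigidity argument for $\iota_K$. You appeal to the thin-knot reasoning of \cite[Section 8]{HM:involutive}, but $\CFKi(P(-2,m,n))$ is \emph{not} thin: the four exceptional generators $y_1,\dots,y_4$ sit one diagonal off from the thin band occupied by the ordinary generators. Consequently Hendricks--Manolescu's Proposition 8.1 does not apply directly, and the assertion ``grading constraints reduce the possibilities to a small set, with any remaining ambiguity not affecting $\Vl_0$ or $\Vu_0$'' is exactly what you would need to prove. What the paper actually proves (Lemma \ref{lemma:simplify}) is that the two parallel diagonals on which the generators live are far enough apart, and populated in the right pattern, that (1) $\iota_K$ is still forced to exchange planar gradings $(i,j) \leftrightarrow (j,i)$, and (2) any filtered homotopy $H$ of degree $+1$ must vanish, so $\iota_K^2 = \sigma$ on the nose. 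Both facts follow from the identities $s = i+j+\tfrac{m+n}{2}-1$ for ordinary generators and $s = i+j+\tfrac{m+n}{2}$ for exceptional ones, together with the bounds on where generators of a fixed Maslov grading can sit. Without this argument you cannot split off the square pairs or isolate the model summand. Also, the ``$(m,n)$-tangle swap symmetry'' you invoke is not a symmetry of the knot for $m\neq n$ and plays no role in the paper; the determination of $\iota_K$ comes entirely from the bigrading.

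The second gap is in the decomposition itself and in the mirror case. After splitting off paired squares, the model summand is not merely a staircase with a parity-dependent grading shift: in the case $m\equiv n\equiv 1\pmod 4$ there is a leftover square complex on the main diagonal (model $C_1$), and this is determined by an Alexander polynomial count of the boxes on each diagonal (Lemma \ref{lemma:split2}). This diagonal box is precisely what causes $\Vl_0(\overline{K}) = \frac{m+n+2}{4}$ to exceed $V_0(\overline{K}) = \Vu_0(\overline{K}) = \frac{m+n-2}{4}$; attributing the extra $+1$ to ``the dualized involution acts nontrivially on the generator that would otherwise realize $\Vl_0 = V_0$'' mislocates the source of the anomaly. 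In addition, the central staircase has two steps of length two at its ends (coming from the differentials out of $x_{1,1}$ and $x_{n-2,m-2}$ into the exceptional generators), so the relevant arithmetic differs slightly from the length-one staircases of thin knots and has to be tracked. Finally, there is no formal ``duality relation'' that determines $V_0$, $\Vl_0$, $\Vu_0$ of the mirror from those of $K$; one must dualize the model complex, transport $\iota_K$, and recompute $H_*(A_0^-)$ and $H_*(AI_0^-)$ from scratch, as the paper does for $\overline{C}_1$.
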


This computation comes from analyzing four essentially distinct cases for the structure of the chain complex $\CFKi(K)$, corresponding to the values of $m$ and $n$ modulo four, as we explain further in Section \ref{subsec:model}.

\begin{remark} In the case that $n=3$, the involutive concordance invariants were already known. The knots $P(-2,m,3)$ are mirrors of $L$-space knots \cite{OS05:lens}. Hendricks and Manolescu computed the involutive concordance invariants of $L$-space knots and their mirrors \cite[Section 7]{HM:involutive}; we review the results of their computation in Section 2. We include the case $n=3$ above for completeness.\end{remark}

\begin{remark} The computation of knot Floer homology for the pretzel knots discussed in this note is particularly simple because they are $(1,1)$ knots \cite{GMM}. In general, $(1,1)$ knots admit Heegaard diagrams depending only on a set of four integer parameters, whose knot Floer homology may be computed combinatorially \cite{Rasmussenexpository, Doyle, Racz}. A possible future research direction is to attempt to give a general, hopefully combinatorial, formula for the skew-filtered chain homotopy equivalence class of $\iota_K$ for knots admitting such diagrams. \end{remark}

\subsection*{Organization} This paper is organized as follows. In Section \ref{sec:background} we review some necessary background; in particular, in Section 2.1 we review some properties of Heegaard Floer homology for knots, and in Section 2.2 we recall involutive Heegaard Floer homology for knots and the construction of the involutive concordance invariants. In Section 3.1 we recall the Heegaard Floer complexes associated to the pretzel knots $P(-2,m,n)$ and carry out a convenient change of basis. In Section 3.2 we present the computation of the involutive invariants associated to these knots and conclude the proof of Theorem \ref{thm:main}.

\subsection*{Acknowledgments} This project was carried out during the Rutgers DIMACS REU in Summer 2020; the second and third authors thank the organizers of the REU for their support. We are also grateful to Jennifer Hom for comments on a draft of this paper. Finally, we thank the referee for many helpful comments and corrections.

\section{Background on Heegaard Floer homology and involutive Heegaard Floer homology} \label{sec:background}

In this section we recall some background on Heegaard Floer homology for knots and involutive Heegaard Floer homology for knots.

\subsection{Heegaard Floer homology for knots}

We begin by briefly reviewing the construction of knot Floer homology, after which we will give a more focused description of some of its algebraic aspects; for a more detailed look, see \cite{OSknots}, or \cite{Homsurvey} for an expository view.

Let $\F$ be the field of two elements. Recall that a \emph{doubly-pointed Heegaard diagram} is a tuple $\mathcal H = (\Sigma, \alphas, \betas, z, w)$ such that
\begin{itemize}
\item $\Sigma$ is a closed oriented surface of genus $g$;
\item $\alphas$ (respectively $\betas$) is a tuple $\{\alpha_1, \dots, \alpha_g\}$ of pairwise disjoint circles (respectively $\{\beta_1, \dots, \beta_g\}$) in $\Sigma$ which span a $g$-dimensional subspace of $H_1(\Sigma; \mathbb F)$.
\item The curves $\alpha_i$ and $\beta_j$ intersect transversely for all $i,j$
\item $w$ and $z$ are points in the complement of $\alphas$ and $\betas$
\end{itemize}

Momentarily ignoring the basepoint $z$, the tuple $(\Sigma, \alphas, \betas, w)$ specifies a 3-manifold $Y$ via thickening $\Sigma$ to $\Sigma \times [0,1]$, attaching thickened disks along each $\alpha_i \times \{0\}$ and $\beta_j \times \{1\}$, and capping off each of the two remaining $S^3$ boundary components with three-balls. A knot $K$ inside of $Y$ is determined by connecting $w$ to $z$ in the complement of the $\beta$-disks and $z$ to $w$ in the complement of the $\alpha$-disks. An example of a Heegaard diagram for the right-handed trefoil appears in Figure~\ref{fig:heegaard}.

\begin{figure}
\fontsize{8pt}{11pt}\selectfont
\begin{center}
\begingroup%
  \makeatletter%
  \providecommand\color[2][]{%
    \errmessage{(Inkscape) Color is used for the text in Inkscape, but the package 'color.sty' is not loaded}%
    \renewcommand\color[2][]{}%
  }%
  \providecommand\transparent[1]{%
    \errmessage{(Inkscape) Transparency is used (non-zero) for the text in Inkscape, but the package 'transparent.sty' is not loaded}%
    \renewcommand\transparent[1]{}%
  }%
  \providecommand\rotatebox[2]{#2}%
  \newcommand*\fsize{\dimexpr\f@size pt\relax}%
  \newcommand*\lineheight[1]{\fontsize{\fsize}{#1\fsize}\selectfont}%
  \ifx\svgwidth\undefined%
    \setlength{\unitlength}{124.20294327bp}%
    \ifx\svgscale\undefined%
      \relax%
    \else%
      \setlength{\unitlength}{\unitlength * \real{\svgscale}}%
    \fi%
  \else%
    \setlength{\unitlength}{\svgwidth}%
  \fi%
  \global\let\svgwidth\undefined%
  \global\let\svgscale\undefined%
  \makeatother%
  \begin{picture}(1,0.7373809)%
    \lineheight{1}%
    \setlength\tabcolsep{0pt}%
    \put(0.40120999,0.19445729){\color[rgb]{0,0,0}\makebox(0,0)[lt]{\lineheight{1.25}\smash{\begin{tabular}[t]{l}$a$\end{tabular}}}}%
    \put(0.42088235,0.29204368){\color[rgb]{0,0,0}\makebox(0,0)[lt]{\lineheight{1.25}\smash{\begin{tabular}[t]{l}$b$\end{tabular}}}}%
    \put(0.52677206,0.0289378){\color[rgb]{0,0,0}\makebox(0,0)[lt]{\lineheight{1.25}\smash{\begin{tabular}[t]{l}$c$\end{tabular}}}}%
    \put(0.38961193,0.09471422){\color[rgb]{0,0,0}\makebox(0,0)[lt]{\lineheight{1.25}\smash{\begin{tabular}[t]{l}$z$\end{tabular}}}}%
    \put(0.49420699,0.19715299){\color[rgb]{0,0,0}\makebox(0,0)[lt]{\lineheight{1.25}\smash{\begin{tabular}[t]{l}$w$\end{tabular}}}}%
    \put(0,0){\includegraphics[width=\unitlength,page=1]{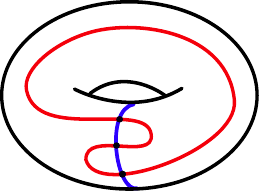}}%
  \end{picture}%
\endgroup%

\end{center}
\caption{A Heegaard diagram $\mathcal H$ for the right-handed trefoil. Here $g=1$ and the single $\alpha$ curve is drawn in red while the single $\beta$ curve is drawn in blue. The complex $\CFKi(\mathcal H)$ is generated over $\F[U,U^{-1}]$ by the three intersection points $a$, $b$, and $c$.}
\label{fig:heegaard}
\end{figure}

Given a Heegaard diagram $\mathcal H$ for $K \subset S^3$ as above, knot Floer homology associates to $\mathcal H$ a free finitely-generated $(\Z \oplus \Z)$-filtered chain complex $\CFKi(\mathcal H)$ over $\F[U,U^{-1}]$ \cite{OSknots, RasmussenThesis}. The construction of this complex uses the $g$-fold symmetric product $\Sym^g(\Sigma) = \Sigma^g/S_g$, where here $\Sigma^g$ denotes the ordinary product of $g$ copies of $\Sigma$ and the quotient is by the action of the symmetric group $S_g$. This points of this symmetric product are unordered $g$-tuples of points on the surface $\Sigma$; moreover, $\Sym^g(\Sigma)$ has the structure of a $g$-dimensional complex manifold. Inside this symmetric product one may consider the tori $\mathbb T_{\alphas} = \alpha_1 \times \dots \times \alpha_g$ and $\mathbb T_{\betas} = \beta_1 \times \dots \times \beta_g$. The generators of $\CFKi(\mathcal H)$ as an $\mathbb F[U,U^{-1}]$-complex are the finitely-many intersection points $x \in \mathbb T_{\alphas} \pitchfork \mathbb T_{\betas}$; concretely, this means that the generators consist of unordered $g$-tuples of intersection points between the curves $\alpha_i$ and $\beta_j$, such that each curve is used exactly once. The differential is defined by counting pseudoholomorphic curves in the symmetric product; for more detail, see \cite{OSknots}.

To the set of generators $\mathbb T_{\alphas} \pitchfork \mathbb T_{\betas}$, Ozsv{\'a}th and Szab{\'o} define maps $A, M: \mathbb T_{\alphas} \pitchfork \mathbb T_{\betas} \rightarrow \mathbb Z$, called the \emph{Alexander} and \emph{Maslov} (or \emph{homological}) gradings respectively. With these maps in hand, the generators of $\CFKi(\mathcal H)$ as an $\mathbb F$-vector space may be written
\[ U^{-i}x = [x, i, j] \text{ such that } x \in \mathbb T_{\alphas}\pitchfork \mathbb T_{\betas}, (i,j) \in \mathbb Z \oplus \mathbb Z, A(x)=j-i.\]
These generators are conventionally drawn on a plane; the element $[x;i,j]$ lies at $(i,j)$ and is said to have \emph{planar grading} $(i,j)$. One may then extend the functions $A$ and $M$ to $\CFKi(\mathcal H)$ via 
\[\gr([x;i,j])=M([x;i,j])=M(x)+2i \qquad \qquad A([x;i,j])=j\] 
The action of the $U$-variable is now given by
\[ U[x; i,j] = [x; i-1, j-1] \]
\noindent and the effect on the gradings of $U$-multiplication is 
\[M(U[x;i,j])=M([x;i-1,j-1])=M([x;i,j])-2\] \[A(U[x;i,j])=A([x;i-1,j-1])=j-1.\]

\noindent The $(i,j)$ level of the $\Z \oplus \Z$ filtration is $\mathcal F_{(i,j)} = \{[x;i',j']\in \CFKi(K): (i',j')\leq (i,j)\}$, where $\Z \oplus \Z$ is given the dictionary order. The differential $\partial$ on $\CFKi(K)$ respects the filtration and is $U$-equivariant; moreover if
\[
\partial([x;i,j]) = \sum [y; i',j']
\]
where each $[y;i'j']$ appears at most once (that is, there are no cancelling pairs in the expression) then for each $[y; i',j']$ we have \[M([y;i',j'])=M([x;i,j])-1.\]

While the construction of $\CFKi(\mathcal H)$ requires a choice of Heegaard diagram, Ozsv{\'a}th and Szab{\'o} show that all such choices produce chain homotopy equivalent chain complexes \cite{OSknots}; indeed, work of Juh{\'a}sz, Thurston, and Zemke \cite{JTNaturality} shows that these chain homotopies are themselves canonical up to homotopy, from which it follows that there is a well-defined filtered chain homotopy equivalence class of complexes $\CFKi(K)$. Throughout the paper, we will generally take some representative for the filtered chain homotopy equivalence class of $\CFKi(K)$; in some cases, such as Example~\ref{ex:fig8} below, this representative will not be the chain complex associated to any Heegaard diagram for the knot.

The homology $H_*(\CFKi(K))$ is always isomorphic to $\mathbb F[U,U^{-1}]$ \cite{OS3manifolds1, OSknots, RasmussenThesis}.

\begin{example} A representative for the filtered chain homotopy equivalence class of the knot Floer complex associated to the right-handed trefoil is shown in Figure \ref{fig:rightTrefoil}; in fact, it is exactly $\CFKi(\mathcal H)$ for the Heegaard diagram $\mathcal H$ of Figure~\ref{fig:heegaard}. As an $\F[U,U^{-1}]$-module, it has three generators $a=[a;0,0]$ in homological grading $-1$, $b=[b; 0,1]$ in homological grading $0$, and $c=[c;0,-1]$ in homological grading $-2$, with differential given by 

\begin{align*}
\partial a = Ub+c \qquad \qquad \partial b=\partial c = 0.
\end{align*}

\noindent The homology of the chain complex is generated over $\F[U,U^{-1}]$ by $[b]=[U^{-1}c]$.

\begin{figure}
\begin{tikzpicture}\tikzstyle{every node}=[font=\tiny] 
\path[->][dotted](0,-3)edge(0,3);
\path[->][dotted](-3,0)edge(3,0);
\node at (-.25,2.75){$j$};
\node at (2.75,-.25){$i$};

\fill(0,1)circle [radius=2pt];
\fill(1,1)circle [radius=2pt];
\fill(1,0)circle [radius=2pt];

\node(y1)at (-.2,1.2){$b$};
\node(y1)at (1.4,1.2){$U^{-1}a$};
\node(y1)at (1.5,.2){$U^{-1}c$};

\path[->](.9,1)edge(.1,1);
\path[->](1,.9)edge(1,.1);

\fill(-1,0)circle [radius=2pt];
\fill(0,0)circle [radius=2pt];
\fill(0,-1)circle [radius=2pt];

\node(y1)at (-1.2,0.2){$Ub$};
\node(y1)at (0.2,0.2){$a$};
\node(y1)at (0.2,-0.8){$c$};

\path[->](-0.1,0)edge(-0.9,0);
\path[->](0,-0.1)edge(0,-0.9);

\fill(-2,-1)circle [radius=2pt];
\fill(-1,-1)circle [radius=2pt];
\fill(-1,-2)circle [radius=2pt];

\node(y1)at (-2.2,-0.75){$U^2b$};
\node(y1)at (-0.8,-0.8){$Ua$};
\node(y1)at (-0.7,-1.8){$Uc$};

\path[->](-1.1,-1)edge(-1.9,-1);
\path[->](-1,-1.1)edge(-1,-1.9);

\node(y1)at (2,2){$\iddots$};
\node(y1)at (-2.5,-2.5){$\iddots$};

\end{tikzpicture}
\caption{A representative for the filtered chain homotopy equivalence class of the knot Floer complex associated to the right-handed trefoil.}
\label{fig:rightTrefoil}
\end{figure}
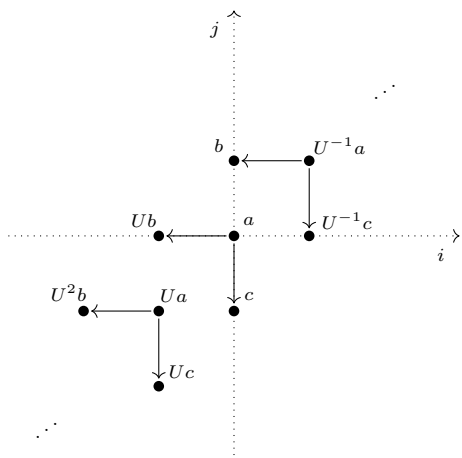
\end{example}

\begin{example} \label{ex:fig8} A representative for the filtered chain homotopy equivalence class of the knot Floer complex associated to the figure-eight knot is shown in Figure \ref{fig:figureEight}. As an $\F[U,U^{-1}]$-module, it has five generators $a=[a;0,0]$ in homological grading $0$, $b=[b; -1,0]$ in homological grading $-1$, $c=[c;0,-1]$ in homological grading $-1$, $e=[e;0,0]$ in homological grading $0$, and $x=[x;0,0]$ in homological grading $0$ with nonzero differentials given by 

\begin{align*}
\partial a = b+c \qquad \qquad \partial b = \partial c = Ue \qquad \qquad \partial e = \partial x = 0.
\end{align*}

\noindent The homology of the chain complex is generated over $\mathbb F[U,U^{-1}]$ by $[x]$.

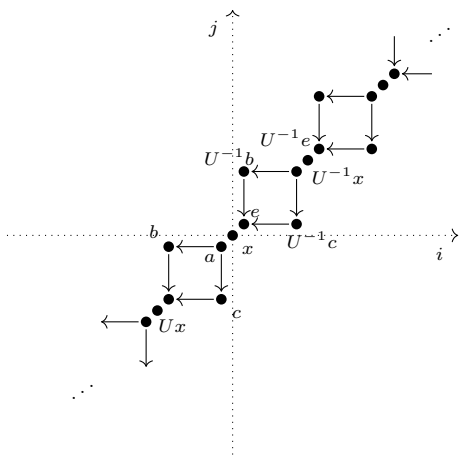
\begin{figure} 
\begin{tikzpicture}\tikzstyle{every node}=[font=\tiny] 
\path[->][dotted](0,-3)edge(0,3);
\path[->][dotted](-3,0)edge(3,0);

\node at (-.25,2.75){$j$};
\node at (2.75,-.25){$i$};

\fill(2,2)circle [radius=2pt];
\fill(2.15,2.15)circle [radius=2pt];

\path[->](2.15,2.65)edge(2.15,2.25);
\path[->](2.65,2.15)edge(2.25,2.15);

\fill(1,1)circle [radius=2pt];
\fill(1.15,1.15)circle [radius=2pt];
\fill(1.15,1.85)circle [radius=2pt];
\fill(1.85,1.15)circle [radius=2pt];
\fill(1.85,1.85)circle [radius=2pt];

\node(y1)at (1.4,0.8){$U^{-1}x$};
\node(y1)at (.7,1.3){$U^{-1}e$};

\path[->](1.15,1.75)edge(1.15,1.25);
\path[->](1.85,1.75)edge(1.85,1.25);
\path[->](1.75,1.15)edge(1.25,1.15);
\path[->](1.75,1.85)edge(1.25,1.85);

\fill(0,0)circle [radius=2pt];
\fill(.15,.15)circle [radius=2pt];
\fill(.15,.85)circle [radius=2pt];
\fill(.85,.15)circle [radius=2pt];
\fill(.85,.85)circle [radius=2pt];

\node(y1)at (.2,-.2){$x$};
\node(y1)at (.3,.3){$e$};
\node(y1)at (-.05,1.05){$U^{-1}b$};
\node(y1)at (1.05,-.05){$U^{-1}c$};
\node(y1)at (-.3,-.3){$a$};

\path[->](.15,.75)edge(.15,.25);
\path[->](.85,.75)edge(.85,.25);
\path[->](.75,.15)edge(.25,.15);
\path[->](.75,.85)edge(.25,.85);

\fill(-1,-1)circle [radius=2pt];
\fill(-0.85,-0.85)circle [radius=2pt];
\fill(-0.85,-0.15)circle [radius=2pt];
\fill(-0.15,-0.85)circle [radius=2pt];
\fill(-0.15,-0.15)circle [radius=2pt];

\node(y1)at (-0.8,-1.2){$Ux$};
\node(y1)at (-1.05,0.05){$b$};
\node(y1)at (0.05,-1.05){$c$};

\path[->](-0.85,-0.25)edge(-0.85,-0.75);
\path[->](-0.15,-0.25)edge(-0.15,-0.75);
\path[->](-0.25,-0.85)edge(-0.75,-0.85);
\path[->](-0.25,-0.15)edge(-0.75,-0.15);

\fill(-1.15,-1.15)circle [radius=2pt];

\path[->](-1.15,-1.25)edge(-1.15,-1.75);
\path[->](-1.25,-1.15)edge(-1.75,-1.15);

\node(y1)at (2.75,2.75){$\iddots$};
\node(y1)at (-2,-2){$\iddots$};

\end{tikzpicture}
\caption{A representative for the filtered chain homotopy equivalence class of the knot Floer complex associated to the figure-eight knot.}
\label{fig:figureEight}
\end{figure}
\end{example}

If $K$ is a knot and $\overline K$ is its mirror, then $\CFK^\infty(\overline K)$ is the dual complex $\CFK^\infty(K)^*$ over $\F [U,U^{-1}]$.

\begin{example} One may obtain a representative for the filtered chain homotopy equivalence class of the knot Floer complex associated to the left-handed trefoil by dualizing the complex for the right-handed trefoil shown in Figure~\ref{fig:rightTrefoil}; the result is shown in Figure \ref{fig:leftTrefoil}. As an $\F[U,U^{-1}]$-module, it has three generators $a=[a;0,0]$ in homological grading $1$, $b=[b; 0,-1]$ in homological grading $0$, and $c=[c;0,1]$ in homological grading $2$, with differential specified by

\begin{align*}
\partial b = Ua \qquad \qquad \partial c = a \qquad \qquad \partial a = 0.
\end{align*}

\noindent The homology of the complex is generated over $\F[U,U^{-1}]$ by $[b+Uc]$.

\begin{figure}
\begin{tikzpicture}\tikzstyle{every node}=[font=\tiny] 
\path[->][dotted](0,-3)edge(0,3);
\path[->][dotted](-3,0)edge(3,0);

\node at (-.25,2.75){$j$};
\node at (2.75,-.25){$i$};

\fill(1,2)circle [radius=2pt];
\fill(1,1)circle [radius=2pt];
\fill(2,1)circle [radius=2pt];

\node(y1)at (0.8,2.2){$U^{-1}c$};
\node(y1)at (0.8,0.8){$U^{-1}a$};
\node(y1)at (2.2,0.8){$U^{-2}b$};

\path[->](1,1.9)edge(1,1.1);
\path[->](1.9,1)edge(1.1,1);

\fill(0,1)circle [radius=2pt];
\fill(0,0)circle [radius=2pt];
\fill(1,0)circle [radius=2pt];

\node(y1)at (-.2,1.2){$c$};
\node(y1)at (-.2,-.2){$a$};
\node(y1)at (1.2,-.2){$U^{-1}b$};

\path[->](0,.9)edge(0,.1);
\path[->](.9,0)edge(.1,0);

\fill(-1,0)circle [radius=2pt];
\fill(-1,-1)circle [radius=2pt];
\fill(0,-1)circle [radius=2pt];

\node(y1)at (-1.2,0.2){$Uc$};
\node(y1)at (-1.22,-1.2){$Ua$};
\node(y1)at (0.2,-1.2){$b$};

\path[->](-1,-0.1)edge(-1,-0.9);
\path[->](-0.1,-1)edge(-0.9,-1);

\node(y1)at (2.5,2.5){$\iddots$};
\node(y1)at (-2,-2){$\iddots$};

\end{tikzpicture}
\caption{A representative for the filtered chain homotopy equivalence class of the knot Floer complex associated to the left-handed trefoil.}\label{fig:leftTrefoil}
\end{figure}
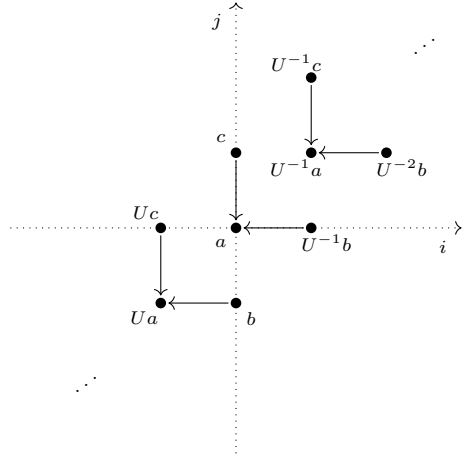
\end{example}

We will have occasion to consider several special subsets of $\CFKi(K)$ which are chain complexes over either the ring $\F[U]$ or over the vector space $\F$. Given a subset $X \in \Z \oplus \Z$, let $CX$ denote the $\mathbb F$-vector space with basis consisting of elements with planar grading $(i,j) \in X$. Some examples of particular importance to us include

\begin{center}
\begin{itemize}
\item The complex $A_0^- = C\{i,j \leq 0\}$, consisting of the portion of $\CFKi(K)$ lying in the third quadrant of the plane, which has the structure of a chain complex over $\mathbb F[U]$.
\item $B^-_0 = C\{i \leq 0\}$, consisting of the portion of $\CFKi(K)$ lying in the second and third quadrants of the plane, which has the structure of a chain complex over $\mathbb F[U]$.
\item $C\{i=0\}= C\{i \leq 0\}/ C\{i < 0\}$, consisting of the portion of $\CFKi(K)$ lying on the $j$-axis, which has the structure of a chain complex over $\mathbb F$.
\end{itemize}
\end{center}

Let us begin by considering the first two complexes. The homology $H_*(A_0^-)$ always admits a (noncanonical) decomposition as a direct sum
\[\F[U]\oplus \left(\bigoplus_{i=1}^k \F[U]/U^{n_i}\right)\] 
\noindent for some natural numbers $1 \leq n_1 \leq \dots \leq n_k$; this follows from the fact that the homology of $\CFK^{\infty}(K)$ is the Heegaard Floer three-manifold invariant $\HF^{\infty}(S^3) \simeq \mathbb F[U,U^{-1}]$ \cite{OS3manifolds1,OSknots, RasmussenThesis}. Moreover, the homology $H_*(B_0^-)$ is isomorphic to a copy of $\F[U]$ with the property that $\gr(1)=0$. (In particular, $H_*(B_0^-)$ is the three-manifold Heegaard Floer invariant $\HFm(S^3)$.) There is a chain map $v_0 \co A_0^- \rightarrow B_0^-$ given by inclusion. For sufficiently large $n$, the induced map on homology $U^nH_*(A_0^-) \to U^nH_*(B^-_0)$ is a nonzero map $\mathbb F[U] \rightarrow \mathbb F[U]$ which must therefore be given by multiplication by some $U^{V_0(K)}$, where $V_0(K)$ is a nonnegative integer. Since the map $v_0$ is grading-preserving, the integer $V_0(K)$ may be computed from the homological degree of the element $1$ in the ``tower'' summand $\mathbb F[U]$ in any decomposition of $H_*(A_0^-)$ into a direct sum of an $\mathbb F[U]$ summand and $U$-torsion summands. In other words, we see that
\[
V_0(K) = -\frac{1}{2}\max \{r : \exists \ x \in H_r(A_0^-), \forall \ n, \ U^nx\neq 0\} \]
\noindent where $H_r(A_0^-)$ denotes the homology in homological grading $r$. Peters \cite[Proposition 2.1]{Peters} and Rasmussen \cite[Theorem 2.3]{Rasmussenh} showed that $V_0(K)$ is an invariant of knot concordance, in Peters's case under the name $d(S^3_{+1}(K))=-2V_0(K)$ and in Rasmussen's case under the name $h_0(\overline{K})$. For more on the context of the concordance invariant $V_0(K)$, see eg \cite[Section 3.2.2]{Homsurvey}.

Now consider the complex $C\{i=0\}$, whose chain homotopy equivalence class is called $\widehat{\CFK}(K)$, which is naturally $\Z$-filtered by the Alexander filtration.  The homology of the associated graded object of $C\{i=0\}$ is 
\begin{equation}
\begin{split}
\HFKhat(K) & =  \bigoplus_{w \in \Z} \HFKhat(K,w) \\
 & = \bigoplus_{w \in \Z} H_*(C\{i = 0,j = w\})
\end{split}
\end{equation}
\noindent and is often referred to as the \emph{knot Floer homology}. If we include the homological grading $s$, we get a further decomposition
\[
\HFKhat(K) = \bigoplus_{w \in \Z, s \in \Z} \HFKhat_s(K,w).
\]
This theory is symmetric in the sense that $\HFKhat_s(K,w) \simeq \HFKhat_{s-2w}(K,-w)$ and furthermore it detects the knot genus via
\[
g(K)= \mathrm{max}\{w: \HFKhat(K,w) \neq 0\}.
\]
Finally, the graded Euler characteristic of the knot Floer homology is the Alexander polynomial of the knot, that is,
\[
\Delta_K(t) = \sum_{w} \chi(\HFKhat(K,w))t^w.
\]
We also consider the vertical and horizontal homologies associated to $\CFKi(K)$, as follows. Let $\partial = \sum_{i,j} \partial_{ij}$ where $\partial_{ij}$ is the term in the differential which decreases the two planar gradings by $i$ and $j$ respectively. Then the vertical differential is $\partial_{\mathrm{vert}}= \sum_{j} \partial_{0,j}$. The vertical homology is the $\F[U,U^{-1}]$-module \[H_*(\CFKi(K), \partial_{\mathrm{vert}})=\bigoplus_{i'} H_*(C\{i=i'\}).\] Likewise the horizontal differential is $\partial_{\mathrm{horz}}= \sum_{i} \partial_{i,0}$ and the horizontal homology is \[H^*(\CFKi(K), \partial_{\mathrm{horz}})=\bigoplus_{j'} H_*(C\{j=j'\}).\] For any knot $K$, the ranks of the vertical and horizontal homologies are one as $\mathbb F[U,U^{-1}]$-modules; alternately, if we ignore the action of $U$, the dimensions of each $H_*(C\{j=j'\})$ and $H_*(C\{i=i'\})$ are one as an $\mathbb F$-vector space.

The action of a Dehn twist around the knot $K$ in $S^3$ induces a filtered chain homotopy equivalence $\sigma$ on $\CFKi(K)$ with the property that $\sigma^2 \sim \Id$, where $\sim$ denotes filtered chain homotopy \cite{Sarkarmoving}. Zemke \cite{Zemkemoving} shows that $\sigma$ admits a simple computation, as follows. Let 

\[ \Phi = \sum_{\substack{i,j\geq 0 \\ i \text{ odd}}} \partial_{ij} \qquad \qquad \Psi = \sum_{\substack{i,j\geq 0 \\ j \text{ odd}}} \partial_{ij}.\]

\noindent We then have 

\[\sigma = \Id + U^{-1}\Phi \circ \Psi.\]

We briefly mention two special types of knots. A knot $K$ is said to be \emph{$L$-space} if it admits a positive surgery which is a three-manifold with Heegaard Floer homology of minimal rank (called a \emph{Heegaard Floer $L$-space}). Ozsv{\'a}th and Szab{\'o} showed \cite{OS05:lens} that if $K$ is an $L$-space knot, the filtered chain homotopy equivalence class of the complex $\CFKi(K)$ has a particularly simple representative. To describe it, we begin by introducing the following notation.

A \emph{positive staircase complex} is a $(\Z\oplus \Z)$-filtered $\F$-chain complex generated by elements $z_0,z_r^1,z_r^2$ where $r$ ranges from $1$ to some integer $v\geq 1$. The element $z_0$ has planar grading $(0,0)$; moreover, the planar gradings of $z^1_{v-2w}$ and $z^1_{v-(2w+1)}$ differ only in the $i$ grading and the planar gradings of $z^1_{v-(2w+1)}$ and $z^1_{v-(2w+2)}$ differ only in the $j$ grading. The planar gradings have the symmetry property that if $z_r^1$ lies at planar grading $(i,j)$ then $z_r^2$ lies at planar grading $(j,i)$.  If $v$ is even, the nonzero differentials in this complex are
\begin{align*}
\partial(z_r^s)=z_{r-1}^s+z_{r+1}^s \text{ for } 1<r<v, r \text{ odd } \qquad \qquad \qquad \partial(z_1^s) = z_0+ z_2^s
\end{align*}
whereas if $v$ is odd they are
\begin{align*}
\partial(z_r^s)=z_{r-1}^s+z_{r+1}^s \text{ for } r>0, r \text{ even} \qquad \qquad \qquad \partial(z_0)=z_1^1+z_1^2.
\end{align*}
\noindent Examples of the two possible forms of a positive staircase are shown in Figure~\ref{fig:Stair}.
The dual of a positive staircase is a \emph{negative staircase complex}. Again the generators are elements $z_0,z_r^1,z_r^2$ where $r$ ranges from $1$ to some integer $v\geq 1$. The element $z_0$ has planar grading $(0,0)$; moreover, the planar gradings of $z^1_{v-2w}$ and $z^1_{v-(2w+1)}$ differ only in the $j$ grading and the planar gradings of $z^1_{v-(2w+1)}$ and $z^1_{v-(2w+2)}$ differ only in the $i$ grading. The planar gradings have the symmetry property that if $z_r^1$ lies at planar grading $(i,j)$ then $z_r^2$ is at $(j,i)$.  If $v$ is even, the nonzero differentials of this complex are
\begin{align*}
&\partial(z_0)=z_1^1 + z_1^2 \\
&\partial(z_r^s)=z_{r-1}^s+z_{r+1}^s\text{ for } 0<r<v, r \text{ even } \\
&\partial(z_v^s)=z_{v-1}^s
\end{align*}
whereas if $v$ is odd they are
\begin{align*}
&\partial(z_1^s) = z_0 + z_2^s \\
&\partial(z_r^s)=z_{r-1}^s+z_{r+1}^s \text{ for } 1 < r < v, r \text{ odd} \\
&\partial(z_v^s)=z_{v-1}^s.
\end{align*}
\noindent Examples of the two possible forms of negative staircase complexes are shown in Figure~\ref{fig:Stair2}.

We now turn to the specific case of an $L$-space knot, which by work of  Ozsv{\'a}th and Szab{\'o} \cite{OS05:lens} has Alexander polynomial
\[
\Delta_K(t) = (-1)^v + \sum_{i=1}^v (-1)^{v-i}(t^{w_i} + t^{-w_i})
\]
for some sequence of positive integers $0<w_1<w_2<\dots <w_v = g(K)$. Let $\ell_i = w_i - w_{i-1}$ and let $n(K)$ denote the sum
\[
n(K) = w_v - w_{v-1} + \cdots + (-1)^{v-2}w_2 + (-1)^{v-1}w_1.
\]
Then the filtered chain homotopy equivalence class of $\CFKi(K)$ has a representative given by $C\otimes \F[U,U^{-1}]$ where $C$ is a positive staircase complex with generators $z_0,z_1^s,\dots, z_v^s$ such that the planar grading of $z_v^1$ is $(n(K), g(K)-n(K))$, and in general the gradings of $z^1_{v-2w}$ and $z^1_{v-(2w+1)}$ differ in the $i$ grading by $\ell_{v-2w}$ and the planar gradings of $z^1_{v-(2w+1)}$ and $z^1_{v-(2w-2)}$ differ in the $j$ grading by $\ell_{v-(2w+1)}$. We call the $\F$-complex $C$ a \emph{model complex}, or in this case a \emph{model complex for $\CFKi(K)$.} The complex associated to the mirror of an $L$-space knot is generated by the elements of the negative staircase produced by dualizing.

In the construction above, we see that the sum $n(K)$ appears in the positive staircase complex associated to the $L$-space knot as the sum of the lengths of the horizontal arrows in the top half of the staircase. For an arbitrary knot, if a representative of the filtered chain homotopy equivalence class of $\CFKi(K)$ contains a direct summand generated over $\mathbb F[U,U^{-1}]$ by a positive staircase complex, we let this fact be a definition of the quantity $n(K)$. If $\CFKi(K)$ contains a direct summand generated by a negative staircase complex, we let the sum of the lengths of the horizontal arrows in the top half of the dual of the complex be $n(K)$; this is equivalent to the sum of the lengths of the vertical arrows in the top half of the complex. Note that any staircase summand contributes a rank one summand $\mathbb F[U,U^{-1}]$ to the vertical homology and to the horizontal homology of the chain complex, and therefore there can be at most one staircase summand in any representation of the chain complex.

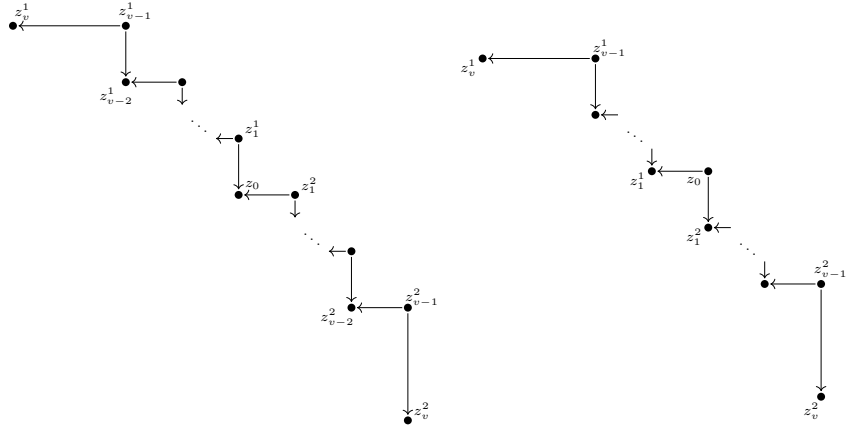
\begin{figure}[htb]
\begin{center}
\scalebox{.75}{
\begin{tikzpicture}\tikzstyle{every node}=[font=\tiny] 
\fill(3,-4)circle [radius=2pt];
\fill(3,-2)circle [radius=2pt];
\fill(2,-2)circle [radius=2pt];
\fill(2,-1)circle [radius=2pt];
\fill(1,0)circle [radius=2pt];
\fill(0,0)circle [radius=2pt];
\fill(0,1)circle [radius=2pt];
\fill(-1,2)circle [radius=2pt];
\fill(-2,2)circle [radius=2pt];
\fill(-2,3)circle [radius=2pt];
\fill(-4,3)circle [radius=2pt];

\node(xa)at (-2.17,1.75){$z_{v-2}^1$};
\node(xa)at (-1.83,3.25){$z_{v-1}^1$};
\node(xa)at (-3.83,3.25){$z_v^1$};

\node(xa)at (-.7,1.3){$\ddots$};

\node(xa)at (.25,.17){$z_0$};
\node(xa)at (1.25,.17){$z_1^2$};
\node(xa)at (.25,1.17){$z_1^1$};

\node(xa)at (1.3,-.7){$\ddots$};

\node(xa)at (1.75,-2.17){$z_{v-2}^2$};
\node(xa)at (3.25,-1.83){$z_{v-1}^2$};
\node(xa)at (3.25,-3.83){$z_v^2$};

\path[->](3,-2.1)edge(3,-3.9);
\path[->](-1.1,2)edge(-1.9,2);
\path[->](-2,2.9)edge(-2,2.1);
\path[->](-.1,1)edge(-.4,1);
\path[->](-1,1.9)edge(-1,1.6);
\path[->](.9,0)edge(.1,0);
\path[->](0,.9)edge(0,.1);
\path[->](1,-.1)edge(1,-.4);
\path[->](1.9,-1)edge(1.6,-1);
\path[->](2,-1.1)edge(2,-1.9);
\path[->](2.9,-2)edge(2.1,-2);
\path[->](-2.1,3)edge(-3.9,3);
\end{tikzpicture}
\begin{tikzpicture}\tikzstyle{every node}=[font=\tiny] 
\fill(-4,2)circle [radius=2pt];
\fill(-2,2)circle [radius=2pt];
\fill(-2,1)circle [radius=2pt];
\fill(-1,0)circle [radius=2pt];
\fill(0,0)circle [radius=2pt];
\fill(0,-1)circle [radius=2pt];
\fill(1,-2)circle [radius=2pt];
\fill(2,-2)circle [radius=2pt];
\fill(2,-4)circle [radius=2pt];

\node(xa)at (2.17,-1.75){$z_{v-1}^2$};
\node(xa)at (1.83,-4.25){$z_v^2$};

\node(xa)at (0.7,-1.3){$\ddots$};

\node(xa)at (-0.25,-0.17){$z_0$};
\node(xa)at (-1.25,-0.17){$z_1^1$};
\node(xa)at (-0.25,-1.17){$z_1^2$};

\node(xa)at (-1.3,0.7){$\ddots$};

\node(xa)at (-1.75,2.17){$z_{v-1}^1$};
\node(xa)at (-4.25,1.83){$z_v^1$};

\path[->](1.9,-2)edge(1.1,-2);
\path[->](2,-2.1)edge(2,-3.9);
\path[->](0.4,-1)edge(0.1,-1);
\path[->](1,-1.6)edge(1,-1.9);
\path[->](-0.1,0)edge(-0.9,0);
\path[->](0,-0.1)edge(0,-0.9);
\path[->](-1,0.4)edge(-1,0.1);
\path[->](-1.6,1)edge(-1.9,1);
\path[->](-2,1.9)edge(-2,1.1);
\path[->](-2.1,2)edge(-3.9,2);
\end{tikzpicture}
}
\end{center}
\caption{Positive staircase complexes for the case that $v$ is even (on the left) and odd (on the right).}
\label{fig:Stair}
\end{figure}
\begin{figure}[htb]
\begin{center}
\scalebox{.75}{
\begin{tikzpicture}\tikzstyle{every node}=[font=\tiny] 
\fill(-3,4)circle [radius=2pt];
\fill(-3,2)circle [radius=2pt];
\fill(-2,2)circle [radius=2pt];
\fill(-2,1)circle [radius=2pt];
\fill(-1,0)circle [radius=2pt];
\fill(0,0)circle [radius=2pt];
\fill(0,-1)circle [radius=2pt];
\fill(1,-2)circle [radius=2pt];
\fill(2,-2)circle [radius=2pt];
\fill(2,-3)circle [radius=2pt];
\fill(4,-3)circle [radius=2pt];

\node(xa)at (2.17,-1.75){$z_{v-2}^2$};
\node(xa)at (1.83,-3.25){$z_{v-1}^2$};
\node(xa)at (3.83,-3.25){$z_v^2$};

\node(xa)at (0.7,-1.3){$\ddots$};

\node(xa)at (-0.25,-0.17){$z_0$};
\node(xa)at (-1.25,-0.17){$z_1^1$};
\node(xa)at (-0.25,-1.17){$z_1^2$};

\node(xa)at (-1.3,0.7){$\ddots$};

\node(xa)at (-1.75,2.17){$z_{v-2}^1$};
\node(xa)at (-3.25,1.83){$z_{v-1}^1$};
\node(xa)at (-3.25,3.83){$z_v^1$};

\path[->](-3,3.9)edge(-3,2.1);
\path[->](1.9,-2)edge(1.1,-2);
\path[->](2,-2.1)edge(2,-2.9);
\path[->](0.4,-1)edge(0.1,-1);
\path[->](1,-1.6)edge(1,-1.9);
\path[->](-0.1,0)edge(-0.9,0);
\path[->](0,-0.1)edge(0,-0.9);
\path[->](-1,0.4)edge(-1,0.1);
\path[->](-1.6,1)edge(-1.9,1);
\path[->](-2,1.9)edge(-2,1.1);
\path[->](-2.1,2)edge(-2.9,2);
\path[->](3.9,-3)edge(2.1,-3);
\end{tikzpicture}
\begin{tikzpicture}\tikzstyle{every node}=[font=\tiny] 
\fill(4,-2)circle [radius=2pt];
\fill(2,-2)circle [radius=2pt];
\fill(2,-1)circle [radius=2pt];
\fill(1,0)circle [radius=2pt];
\fill(0,0)circle [radius=2pt];
\fill(0,1)circle [radius=2pt];
\fill(-1,2)circle [radius=2pt];
\fill(-2,2)circle [radius=2pt];
\fill(-2,4)circle [radius=2pt];

\node(xa)at (-2.17,1.75){$z_{v-1}^1$};
\node(xa)at (-1.83,4.25){$z_v^1$};

\node(xa)at (-.7,1.3){$\ddots$};

\node(xa)at (.25,.17){$z_0$};
\node(xa)at (1.25,.17){$z_1^2$};
\node(xa)at (.25,1.17){$z_1^1$};

\node(xa)at (1.3,-.7){$\ddots$};

\node(xa)at (1.75,-2.17){$z_{v-1}^2$};
\node(xa)at (4.25,-1.83){$z_v^2$};

\path[->](-1.1,2)edge(-1.9,2);
\path[->](-2,3.9)edge(-2,2.1);
\path[->](-.1,1)edge(-.4,1);
\path[->](-1,1.9)edge(-1,1.6);
\path[->](.9,0)edge(.1,0);
\path[->](0,.9)edge(0,.1);
\path[->](1,-.1)edge(1,-.4);
\path[->](1.9,-1)edge(1.6,-1);
\path[->](2,-1.1)edge(2,-1.9);
\path[->](3.9,-2)edge(2.1,-2);
\end{tikzpicture}
}
\end{center}
\caption{Negative staircase complexes for the case that $v$ is even (on the left) and odd (on the right).}
\label{fig:Stair2}
\end{figure}
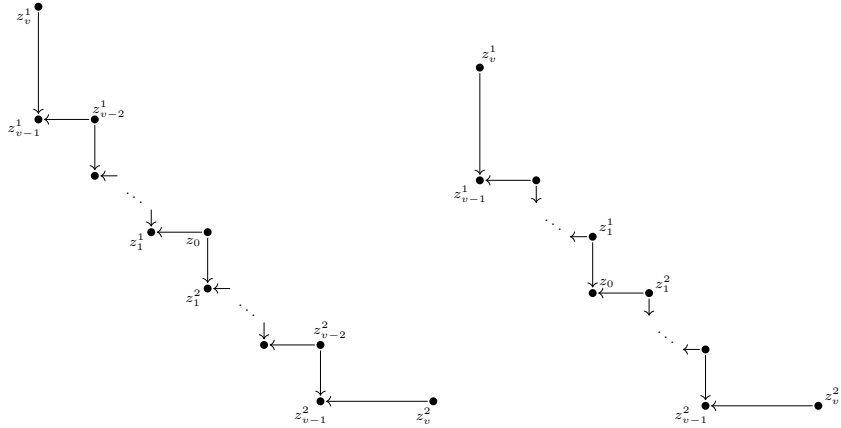

We now turn to our second special type of knot. A knot is said to be \emph{thin} if its knot Floer homology $\HFKhat(K) = \oplus \HFKhat_s(K,w)$ has the property that $w-s$ is a constant $k$ for all $\HFKhat_s(K,w) \neq \{0\}$. The terminology here is because if the knot Floer homology $\HFKhat$ were graphed with the Maslov grading on one axis and the Alexander on the other, the support of $\HFKhat$ would lie in a single diagonal.  More generally, a \emph{thin complex} is a graded $\mathbb Z\oplus \mathbb Z$-filtered $\mathbb F[U,U^{-1}]$ chain complex with the property that there is a constant $k$ such that generators at planar grading $(i,j)$ always have homological grading $s=i+j-k$; this implies that the generators $[x;0,j]$ on the $j$-axis have constant difference between their Alexander and homological gradings. A thin knot $K$ admits a representative of $\CFKi(K)$ which is a thin complex \cite[Lemma 5]{Petkovacables}.

Petkova \cite{Petkovacables} showed that complexes associated to thin knots have an especially simple form up to chain homotopy equivalence, which we now review. Let the \emph{square complex} $C_s$, also known as a \emph{one-by-one box complex}, refer to an $\F$-chain complex with generators $a,b,c,Ue$, in filtration levels $(i+1,j+1),(i,j+1),(j+1,i),(i,j)$ respectively, and homological gradings $k+2,k+1,k+1,k$ respectively, with differential
$$\partial(a)=b+c,~~~~\partial(b)=Ue,~~~~\partial(c)=Ue,~~~~\partial(Ue)=0.$$
One readily checks that $\ker\partial$ and $\operatorname{im}\partial$ are both the vector space span of $b+c$ and $Ue$; hence, $C_s$ is acyclic. If $i=j$ the square complex is said to be \emph{on the main diagonal}. See Figure \ref{fig:Cs} for a picture of $C_s$.
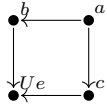
\begin{figure}[htb]
\begin{center}
\begin{tikzpicture}\tikzstyle{every node}=[font=\tiny] 
\fill(0,0)circle [radius=2pt];
\fill(0,1)circle [radius=2pt];
\fill(1,0)circle [radius=2pt];
\fill(1,1)circle [radius=2pt];

\node(xa)at (.25,.17){$Ue$};
\node(xb)at (1.15,.15){$c$};
\node(xc)at (.15,1.15){$b$};
\node(xe)at (1.15,1.15){$a$};

\path[->](.9,1)edge(.1,1);
\path[->](.9,0)edge(.1,0);
\path[->](1,.9)edge(1,.1);
\path[->](0,.9)edge(0,.1);
\end{tikzpicture}
\end{center}
\caption{A copy of the square complex $C_s$.}
\label{fig:Cs}
\end{figure}
\begin{lemma} \cite[Lemma 7]{Petkovacables} \label{lemma:Petkova}
Suppose $C$ is a thin complex with horizontal and vertical homologies of rank at most one. Then $C$ is filtered chain homotopy equivalent to the tensor product of $\F[U,U^{-1}]$ with a direct sum of square complexes and at most one staircase complex all of whose steps are length one.
\end{lemma}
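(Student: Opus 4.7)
The plan is to view $(C, \partial)$ as an anticommuting bicomplex of finite-dimensional $\F$-vector spaces and classify its indecomposable direct summands.

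First, I observe that thinness forces the differential to split as $\partial = \partial_H + \partial_V$, where $\partial_H$ decreases $i$ by one while fixing $j$, and $\partial_V$ decreases $j$ by one while fixing $i$. Indeed, a term in $\partial x$ decreases the Maslov grading by exactly one; the thin identity $M = i + j - k$, together with the filtration bound $p, q \geq 0$ on the decrease $(p, q)$, forces $p + q = 1$ and hence $(p, q) = (1, 0)$ or $(0, 1)$. All arrows are therefore of length one, and $\partial^2 = 0$ decomposes as $\partial_H^2 = 0$, $\partial_V^2 = 0$, and $\partial_V \partial_H + \partial_H \partial_V = 0$, making $C$ an anticommuting bicomplex in the above sense.

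Second, using filtered basis changes internal to each single planar grading $(i, j)$---which preserve every filtration level and are hence automatically filtered---I would put the pair $(\partial_H, \partial_V)$ into a direct sum of indecomposable bicomplex summands. Over the field $\F$ with only length-one arrows, a direct classification of the possible indecomposable shapes yields two non-trivial types: (i) a four-generator acyclic \emph{square} summand isomorphic to $C_s$, and (ii) an open \emph{zigzag} summand consisting of $2k + 1$ generators joined by an alternating sequence of length-one horizontal and vertical arrows, whose two endpoints remain unpaired. The classification is essentially a bigraded Gaussian elimination: iteratively match unpaired source-target pairs connected by a single arrow within each planar grading, using the anticommutation to propagate the matching consistently.

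Finally, I would use the rank bounds on the horizontal and vertical homologies to constrain the non-square summands. A square is acyclic and contributes nothing to either homology; each zigzag of $2k + 1$ generators contributes exactly one to both $H_*(C, \partial_H)$ and $H_*(C, \partial_V)$, its two endpoints being independent representatives in the respective homologies. Any other conceivable indecomposable shape, such as a single length-one arrow, would contribute two to one direction's homology and is thus forbidden by the rank bound. Consequently there is at most one zigzag summand, which is the claimed staircase of length-one steps after tensoring with $\F[U, U^{-1}]$, and the remaining summands are squares. I expect the main obstacle to be the bicomplex indecomposable classification in the second paragraph: rigorously ruling out exotic indecomposable shapes (e.g., longer non-linear patterns) over $\F$ with length-one arrows. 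The resolution leans heavily on the length-one constraint inherited from thinness, which prevents any branching beyond the square and zigzag patterns described.
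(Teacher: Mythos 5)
The paper does not prove this lemma; it cites Petkova, whose argument is a direct combinatorial case analysis on the arrows incident to generators of the reduced complex. Your approach, reducing the problem to classifying indecomposable direct summands of an anticommuting bicomplex over $\F$, is a genuinely different and more structural route.

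Your first paragraph is correct and is the same starting observation Petkova makes: the thin identity forces $p+q=1$ for any arrow of bidegree $(-p,-q)$, hence $\partial = \partial_H + \partial_V$ with all arrows of length one, and $\partial^2 = 0$ unpacks into the three bicomplex identities. The substantive gap is in your second paragraph, and you rightly flag it: the claim that basis changes within each bigraded piece split the bicomplex into squares and zigzags needs justification. It is true, and the anticommutation relation is precisely what forces any would-be closed loop to be the four-cycle, i.e.\ the square---morally this is a string-and-band classification in which zigzags are the strings and the square is the only planar band realizable by length-one arrows---but establishing it rigorously is comparable in effort to Petkova's own case analysis, so you should either cite a reference for the indecomposable classification or supply the inductive reduction explicitly. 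A related imprecision: in paragraph two you list only odd-length zigzags (your $2k+1$ generators) as the non-square indecomposables, but even-length zigzags, such as a single arrow, are also indecomposable; these are exactly the shapes your third paragraph then rules out via the rank bound on the horizontal and vertical homologies, so they should appear in the enumeration before being excluded rather than being absent from it. One observation in your favor: since your decomposition uses only basis changes within each $(i,j)$-level, it would produce a filtered isomorphism rather than merely a filtered chain homotopy equivalence, which is stronger than what the lemma asserts.
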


Indeed, Petkova shows this result for any $\mathbb Z\oplus \mathbb Z$-filtered $\mathbb F[U,U^{-1}]$-complex with the property that all differentials lower one of the horizontal and vertical filtrations by exactly one, that is, such that all arrows in the complex are either horizontal of length one or vertical of length one (which must be true of a thin complex). The case analysis involved in the proof also determines the length of the staircase (that is, the integer $v$) and whether the staircase is positive or negative.

\subsection{Involutive Heegaard Floer homology for knots}  Recall that if $C$ is a $(\Z \oplus \Z)$-filtered complex, a chain map $\alpha \co C\rightarrow C$ is said to be skew-filtered if $\alpha(\mathcal{F}_{j,i}) \subset \mathcal{F}_{i,j}$. In \cite[Section 6]{HM:involutive}, Hendricks and Manolescu use a conjugation operation on knot Floer homology to define a skew-filtered automorphism 

\[ \iota_{K,\mathcal H} \co \CFKi(\mathcal H) \rightarrow \CFKi(\mathcal H).\]

The the pair $(\CFKi(\mathcal H),\iota_{K, \mathcal H})$ is an invariant of the knot up to equivariant chain homotopy equivalence, sometimes called \emph{strong equivalence}. Specifically, two pairs $(C_i, \iota_i)$ for $i=1,2$ for which $C_i$ is a finitely-generated $\mathbb Z\oplus \mathbb Z$-filtered $\mathbb F[U,U^{-1}]$-complex and $\iota_i$ is a skew-filtered automorphism are said to be strongly equivalent if there are filtered chain maps $f: C_1 \rightarrow C_2$ and $g:C_2 \rightarrow C_1$ with the property that \[gf \sim \Id_{C_1} \qquad \qquad gf \sim \Id_{C_2} \qquad \qquad f\iota_1 \sim \iota_2f \qquad \qquad \iota_1g \sim g\iota_2\] where the first two equivalences are filtered chain homotopy equivalence and the second two are skew-filtered chain homotopy equivalence.

In general we take some representative for the strong equivalence class $(\CFKi(K), \iota_K)$, not always one arising from a Heegaard diagram. The map $\iota_K$ is in principle difficult to compute. However, it has the following useful property.

\begin{lemma} \cite{HM:involutive} The square of the map $\iota_K$ is filtered chain homotopic to the Sarkar involution; that is, we have
\[ {\iota_K}^2 \sim \sigma. \]
\end{lemma}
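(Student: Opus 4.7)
The plan is to unpack the definition of $\iota_K$ and track what happens under composition. Following Hendricks--Manolescu, the involution $\iota_K$ is constructed by choosing a doubly-pointed Heegaard diagram $\cH = (\Sigma,\alphas,\betas,w,z)$ for $(S^3,K)$ and taking the composition
\[ \iota_K = \Phi_{\bar\cH \to \cH} \circ \eta, \]
where $\eta \co \CFKi(\cH) \to \CFKi(\bar\cH)$ is the tautological chain isomorphism to the conjugate diagram $\bar\cH = (-\Sigma,\betas,\alphas,w,z)$, and $\Phi_{\bar\cH\to\cH}$ is a chain homotopy equivalence realizing naturality of knot Floer homology along some sequence of Heegaard moves from $\bar\cH$ back to $\cH$. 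Since $\eta$ is a literal isomorphism satisfying $\eta \circ \eta = \Id$ when read between the appropriate complexes, the square $\iota_K^2 = \Phi_{\bar\cH\to\cH}\circ(\eta\circ\Phi_{\bar\cH\to\cH}\circ\eta)$ collapses to a self-map of $\CFKi(\cH)$ realized by a closed loop of Heegaard moves: travel from $\cH$ to $\bar\cH$, apply $\eta$ to reinterpret the second $\Phi$ as a sequence of moves from $\cH$ to $\bar\cH$ carried out on the conjugate side, then return.

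Next, I would appeal to the Juh\'asz--Thurston--Zemke naturality of knot Floer homology to convert this loop of Heegaard moves into a (basepoint-preserving) diffeomorphism $\phi$ of the pointed pair $(S^3,K,w,z)$, well-defined up to isotopy. The induced map on $\CFKi(K)$ depends only on the mapping class of $\phi$ in the pointed mapping class group of $(S^3,K)$. The crucial computation is to identify $\phi$: I expect that composing conjugation with itself gives the identity diffeomorphism of the underlying 3-manifold, but not of the decorated pair, because the loop of diagrams drags the basepoint $z$ once around the knot $K$ (or, equivalently, performs a Dehn twist along a meridional disk dual to $K$). This is exactly the diffeomorphism whose induced map on $\CFKi(K)$ is $\sigma$, by Sarkar's identification of the basepoint-moving class with the formula $\Id + U^{-1}\Phi\circ\Psi$ recalled above.

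The main obstacle is the naturality bookkeeping: one must verify that the loop of moves coming from $\eta\circ\Phi_{\bar\cH\to\cH}\circ\eta$ is isotopic, in the pointed mapping class group, to a single basepoint-pushing Dehn twist about $K$, independently of the choice of intermediate diagrams and the particular naturality maps used to define $\Phi_{\bar\cH\to\cH}$. This reduces to the analogous closed-manifold statement $\iota^2\sim\Id$, established by Hendricks--Manolescu in the three-manifold setting, enhanced by a careful accounting of how the second basepoint is transported throughout the loop. Once this identification is in place, the conclusion $\iota_K^2\sim\sigma$ follows immediately from Sarkar's and Zemke's description of the basepoint-moving action, since both maps realize the same element of the pointed mapping class group and hence the same filtered chain homotopy class of automorphisms of $\CFKi(K)$.
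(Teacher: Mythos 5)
The lemma you are proving is stated in this paper as a citation to Hendricks--Manolescu, and the paper gives no proof of its own; so there is nothing internal to compare against. Your reconstruction is aimed at the proof in \cite{HM:involutive}, and the overall architecture does match theirs: build $\iota_K$ as a conjugation isomorphism followed by a naturality map, observe that $\iota_K^2$ is the map assigned to a closed loop of Heegaard moves, identify that loop in the pointed mapping class group of $(S^3,K)$, and appeal to Sarkar and Zemke's identification of the basepoint-pushing class with $\Id + U^{-1}\Phi\circ\Psi$.

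There is, however, a concrete error near the start that matters. You write the conjugate diagram as $\bar\cH = (-\Sigma,\betas,\alphas,w,z)$, i.e.\ without exchanging the basepoints. In the knot setting the conjugate diagram is $(-\Sigma,\betas,\alphas,z,w)$: reversing $\Sigma$ and swapping $\alphas$ and $\betas$ would, with the old basepoint roles, represent the mirror of $K$, and it is the additional $w\leftrightarrow z$ swap that returns you to $K$ while negating the Alexander grading. This is exactly what makes $\iota_K$ \emph{skew}-filtered rather than filtered, and it is also what makes the squared loop nontrivial: each conjugation carries a ``half turn'' in how the pair $(w,z)$ is transported, so the full loop drags a basepoint once around a meridian. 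If you drop the swap, as written, you lose both the skew-filtered property and the geometric source of the full twist, and the argument cannot reach $\sigma$.

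Beyond that, the step you yourself flag as ``the main obstacle'' is in fact the heart of the proof. Asserting that $\Phi_{\bar\cH\to\cH}\circ\Phi_{\cH\to\bar\cH}$ realizes the basepoint-pushing Dehn twist is not something that follows from naturality formalities alone; one has to trace, through an explicit choice of Heegaard moves relating $\cH$ to $\bar\cH$, how $z$ is transported, and check that the net effect is a single positive pass around $K$. As it stands your proposal correctly identifies the target statement and the relevant black boxes (Juh\'asz--Thurston--Zemke naturality, Sarkar's formula, Zemke's computation) but leaves the one genuinely nontrivial geometric verification unproved, so it is an outline rather than a proof.
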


Note that this implies that ${\iota_K}^4$ is filtered chain homotopic to the identity map. In \cite[Sections 7 and 8]{HM:involutive}, Hendricks and Manolescu show that this is enough to uniquely determine $\iota_K$ up to filtered chain homotopy for $L$-space knots and thin knots. 

There are two knot concordance invariants analogous to $V_0(K)$ arising from this complex, as follows. First, choosing some representative for $(\CFKi(K),\iota_K)$, we consider the complex
\[
CI^{\infty} = \left((\CFKi(K) \otimes \mathbb F[Q]/(Q^2))[-1], \del + Q(1+\iota_K) \right)
\]
or equivalently the mapping cone
\[ CI^{\infty} = (\Cone(\CFKi(K) \xrightarrow{Q(1+\iota_K)} Q\CFKi(K)[-1]))\]
where multiplication by $Q$ lowers the homological grading by $1$ and the term $[-1]$ denotes an upward shift on the homological grading by $1$. Notice that this specifically implies that if $x$ is a generator in our representative for $\CFKi(K)$ having homological grading $\gr(x)=s$, then in the complex $CI^{\infty}$ the element $x$ has homological grading $s+1$ and the element $Qx$ has homological grading $s$.  To distinguish the involutive differential from the ordinary differential, we let $\partial^{\iota} = \del + Q(1+\iota_K)$ denote the involutive differential throughout.

Observe that there is an exact triangle
\[ \dots \rightarrow H_{r+1}(CI^{\infty}) \rightarrow H_{r+1}(\CFKi(K)) \xrightarrow{Q(1+\iota_K)} H_{r}(Q\CFKi(K)) \rightarrow H_r(CI^{\infty}))\rightarrow \dots\]
Since $H_*(C) \simeq \F[U,U^{-1}]$ is at most one-dimensional in any given homological grading and $(\iota_K)_*$ is an isomorphism, we see that $1+(\iota_K)_*$ is a zero map, and \[H_*(CI^{\infty}) \simeq \F[U,U^{-1},Q]/(Q^2).\]

Now consider the subcomplex $A_0^-$ of our representative for $\CFKi$, which is preserved by $\iota_K$. As previously, denote its boundary map by $\del$. Then we consider the $\mathbb F[U,Q]/(Q^2)$-complex
\[ AI_0^- = \left((A_0^- \otimes \mathbb F[Q]/(Q^2))[-1], \del + Q(1+\iota_K) \right).\]
This may also be expressed as the mapping cone
\[
AI_0^-=\mathrm{Cone}(A_0^- \xrightarrow{Q(1+\iota_K)} QA_0^-[-1])
\]
\noindent  A similar argument using the exact triangle associated to the mapping cone shows that the homology $H_*(AI_0^-)$ always admits a (noncanonical) decomposition as an $\mathbb F[U]$-module into a direct sum of two copies of $\mathbb F[U]$ along with some $U$-torsion summands. Of the two $\mathbb F[U]$ summands, one has a generator $[x_1]$ lying in an odd homological grading with the property that $U^n[x_1]$ is never in the image of $Q$ for any $n\geq 0$, and one has a generator $[x_2]$ lying in an even homological grading with the property that $U^n[x_2] \subset \Im(Q)$ for $n\gg 0$. Indeed, $[x_1]$ and $[x_2]$ may be chosen such that $Q[x_1] = U^m[x_2]$ for some sufficiently large $m$. As in the non-involutive case, the top gradings of these two summands, which is to say the gradings of the generators $[x_1]$ and $[x_2]$, are concordance invariants associated to the knot. The involutive concordance invariants are then
\[
\Vl_0(K) = -\frac{1}{2}\left(\max \{r : \exists \ x \in H_r(AI_0^-), \forall \ n, \ U^nx\neq 0 \ \text{and} \ U^nx \notin \operatorname{Im}(Q)\}-1\right) \]
and
\[ 
\Vu_0(K) = -\frac{1}{2}\max \{r : \exists \ x \in H_r(AI_0^-), \forall \ n, U^nx\neq 0; \exists \ m\geq 0 \ \operatorname{s.t.} \ U^m x \in \operatorname{Im}(Q)\}. \]

This is not quite Hendricks and Manolescu's original definition, which is given in terms of correction terms of surgeries on knots \cite[Theorem 1.6]{HM:involutive} and rephrased in terms of the gradings of $AI_0^+ = C\{(i,j): i\geq 0 \text{ or } j\geq 0\}$ \cite[pg. 45]{HM:involutive}; our definition is equivalent via the duality of the minus and plus variants of Heegaard Floer homology (cf for example the discussion in \cite[Section 3.8]{HHSZ}).

We briefly recall two special cases. The \emph{standard staircase map} on a staircase complex $C$ is the reflection across the line $i=j$.

\begin{proposition}\cite[Section 7]{HM:involutive} Let $K$ be an $L$-space knot, so that $\CFKi(K) \simeq C\otimes \F[U,U^{-1}]$ for $C$ a positive staircase complex. The involution on $\CFKi(K)$ is generated by the standard staircase map, and the involutive concordance invariants are
\[V_0(K) = \Vl_0(K) = \Vu_0(K) = n(K)\]
and
\[V_0(\overline{K})=\Vl_0(\overline{K})=0, \ \Vu_0(\overline{K}) = -n(K).\]
\end{proposition}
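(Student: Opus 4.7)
My plan is to first pin down the filtered chain homotopy class of $\iota_K$ on the positive staircase $C$ as the standard staircase involution $\tau$, and then read off the involutive invariants by direct analysis of the cone complex $AI_0^-$ for $K$ and for its mirror.

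\textbf{Identifying $\iota_K$.} Since $\CFKi(K) \simeq C \otimes \F[U,U^{-1}]$, it suffices to determine $\iota_K$ on the staircase generators $z_0, z_r^1, z_r^2$. The map $\iota_K$ is homogeneous, skew-filtered, and $U$-equivariant, and these generators sit at distinct bidegrees arranged symmetrically under $(i,j)\leftrightarrow(j,i)$. The skew-filtered condition forces $\iota_K(z_r^s)$ to be supported at bidegrees dominated by the reflection $(j_r,i_r)$, and the Maslov grading constraint singles out $z_r^{s'}$ (with $\{s,s'\}=\{1,2\}$) as the only possible leading term, with $\iota_K(z_0)$ having leading term $z_0$. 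The leading coefficients must equal $1$ because $\iota_K$ induces the classical conjugation symmetry on $\widehat{\HFK}$, and any lower-order corrections can be absorbed by a filtered change of basis, the crucial point being that the staircase has no acyclic box summands to harbor hidden self-maps. The upshot is $\iota_K \sim \tau$, and the compatibility $\iota_K^2 \sim \sigma$ is automatic since $\tau^2 = \id$ and the Sarkar contribution $U^{-1}\Phi\Psi$ vanishes on a staircase (each step shifts only one coordinate).

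\textbf{Invariants of $K$.} For the positive staircase, $H_*(A_0^-)\cong\F[U]\oplus\textup{(torsion)}$ with the $\F[U]$-tower generated in Maslov grading $-2n(K)$, giving $V_0(K)=n(K)$. Because $\tau$ fixes the tower generator, $(1+\iota_K)_*$ vanishes on the tower and the cone long exact sequence produces two surviving $U$-towers in $H_*(AI_0^-)$: one from the $A_0^-$-summand, outside $\operatorname{Im}(Q)$, contributing $\Vl_0(K)=n(K)$, and one from the $QA_0^-$-summand, inside $\operatorname{Im}(Q)$, contributing $\Vu_0(K)=n(K)$.

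\textbf{Invariants of $\overline K$.} For the mirror, $\CFKi(\overline K)$ is the dual negative staircase, with $H_*(A_0^-)\cong\F[U]\oplus\textup{(torsion)}$ whose $\F[U]$-tower is generated in Maslov grading $0$; this gives $V_0(\overline K)=0$, and the analogous cone argument with $\tau$ still fixing the tower yields $\Vl_0(\overline K)=0$. The value $\Vu_0(\overline K)=-n(K)$ is detected by the top torsion class $[\xi]$ of $H_*(A_0^-)$: although $U\cdot[\xi]=0$ in $H_*(A_0^-)$, the would-be witness boundary in $A_0^-$ picks up a nonzero $Q$-correction from the cone differential, so $U\cdot[\xi]$ lifts to a nontrivial element of $\operatorname{Im}(Q)$ in $H_*(AI_0^-)$. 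The lift of $[\xi]$ is therefore $U$-nontorsion and eventually lands in $\operatorname{Im}(Q)$, placing the $\Vu_0$-witness at Maslov grading $2n(K)$.

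\textbf{Main obstacle.} The technical heart is the rigidity step for identifying $\iota_K$: showing that every lower-order perturbation of $\iota_K(z_r^s) = z_r^{s'}+\cdots$ can be cleared by a filtered change of basis. This requires a careful grading-driven argument combined with the absence of box summands in the staircase. Once this identification is secured, the invariants follow from direct bookkeeping of cycles and boundaries in the cone, guided by the explicit form of $\tau$ and the known structure of $H_*(A_0^-)$.
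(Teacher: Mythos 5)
This proposition is cited from Hendricks--Manolescu (with the trefoil example in the paper serving only as an illustration), so the paper offers no proof of its own; your sketch follows the same approach as the cited argument: pin down $\iota_K$ as the standard staircase reflection, then track the $U$-towers through the mapping cone defining $AI_0^-$. Your invariant computations check out against the paper's explicit trefoil numbers (the tower of $H_*(A_0^-)$ at grading $-2n(K)$ for the positive staircase, at $0$ for the negative, and the torsion class for the mirror promoted to a non-torsion $\Vu_0$-witness in grading $2n(K)$ via the $Q$-correction in the cone differential), and the general gradings you quote are consistent with the stated answer.

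Two caveats. First, the rigidity step identifying $\iota_K$ with $\tau$ is where all the real work lies, and your treatment of it is an assertion rather than an argument: the claim that lower-order perturbations can always be cleared by a filtered change of basis is precisely the uniqueness-up-to-filtered-homotopy lemma that Hendricks--Manolescu prove for staircase complexes, and simply appealing to the absence of box summands does not yet establish it. You correctly flag this as the main obstacle, but the sketch here does not fill the gap. Second, a minor point: your parenthetical explanation for $\Phi\Psi = 0$ on a staircase ("each step shifts only one coordinate") is not itself the reason $\Phi\Psi$ vanishes; the actual reason is that every nonzero component of the staircase differential lands on a corner generator whose differential is zero, so the composite $\Phi\circ\Psi$ has nowhere to go. The conclusion $\sigma = \id$ on a staircase is correct, but it deserves the right justification.
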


Computations are also accessible for thin knots. We first define a standard map on a pair of square complexes. Consider two square complexes $C_s$ generated by $a,b,c,Ue$ and $C_s'$ generated by $a',b',c',Ue'$, with such that $a$ lies in planar grading $(i+1,j+1)$ and $a'$ lies in planar grading $(j+1,i+1)$, as in Figure \ref{fig:Interchange}. The \emph{standard square map} between $C_s \otimes \F[U,U^{-1}]$ and $C_s' \otimes \F[U,U^{-1}]$ is
$$\iota_K(a)=a'+e',~~~~\iota_K(b)=c',~~~~\iota_K(c)=b',~~~~\iota_K(Ue)=Ue'$$
$$\iota_K(a')=a,~~~~\iota_K(b')=c,~~~~\iota_K(c')=b,~~~~\iota_K(Ue')=Ue.$$

\noindent Observe that this is a chain map with the property that ${\iota_K}^2=\sigma$, as $\sigma(a)=a+e$ and $\sigma(a')=a'+e'$.
\begin{figure}[htb]
\begin{center}
\begin{tikzpicture}\tikzstyle{every node}=[font=\tiny] 
\fill(-1,1)circle [radius=2pt];
\fill(-1,2)circle [radius=2pt];
\fill(0,1)circle [radius=2pt];
\fill(0,2)circle [radius=2pt];

\node(xa)at (-.75,1.17){$Ue$};
\node(xb)at (.15,1.15){$c$};
\node(xc)at (-.85,2.15){$b$};
\node(xe)at (.15,2.15){$a$};

\path[->](-.1,2)edge(-.9,2);
\path[->](-.1,1)edge(-.9,1);
\path[->](0,1.9)edge(0,1.1);
\path[->](-1,1.9)edge(-1,1.1);

\fill(1,-1)circle [radius=2pt];
\fill(1,0)circle [radius=2pt];
\fill(2,-1)circle [radius=2pt];
\fill(2,0)circle [radius=2pt];

\node(xa)at (1.25,-.83){$Ue'$};
\node(xb)at (2.15,-.85){$c'$};
\node(xc)at (1.15,.15){$b'$};
\node(xe)at (2.15,.15){$a'$};

\path[->](1.9,0)edge(1.1,0);
\path[->](1.9,-1)edge(1.1,-1);
\path[->](2,-.1)edge(2,-.9);
\path[->](1,-.1)edge(1,-.9);
\end{tikzpicture}
\end{center}
\caption{A pair of square complexes.}
\label{fig:Interchange}
\end{figure}
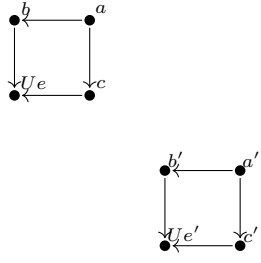

\begin{proposition}\cite[Proposition 8.1]{HM:involutive}\label{prop:thiniota} 
Let $K$ be a thin knot. The complex $(\CFKi(K), \iota_K)$ admits a representative which decomposes as a direct sum of pairs of square complexes $C_s \otimes \F[U,U^{-1}]$ and $C_s'\otimes \F[U,U^{-1}]$ interchanged by the standard square map, and a complex $C\otimes \F[U,U^{-1}]$ preserved by $\iota_K$ such that $C$ consists of a staircase complex and at most one square complex on the main diagonal.
\end{proposition}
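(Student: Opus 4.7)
The plan is to first reduce the underlying chain complex via Lemma \ref{lemma:Petkova} and then normalize the involution $\iota_K$ through a sequence of filtered changes of basis, using the constraints that $\iota_K$ is skew-filtered and $\iota_K^2 \sim \sigma$. Thinness of $K$ ensures that the horizontal and vertical homologies have rank one, so Lemma \ref{lemma:Petkova} yields a filtered chain homotopy equivalence $\CFKi(K) \simeq \F[U,U^{-1}] \otimes (C_{\mathrm{st}} \oplus \bigoplus_\alpha C_{s,\alpha})$, where $C_{\mathrm{st}}$ is either empty or a unit-step staircase and each $C_{s,\alpha}$ is a square complex with top generator $a_\alpha$ at some planar grading $(i_\alpha+1, j_\alpha+1)$. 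Transferring $\iota_K$ across this equivalence gives a new skew-filtered chain map on the model complex still satisfying $\iota_K^2 \sim \sigma$, and it suffices to normalize this transported map.

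Next I would identify the coarse permutation that $\iota_K$ induces on these summands. Since $\iota_K$ is skew-filtered and each $a_\alpha$ is a top generator of an acyclic summand, the leading term of $\iota_K(a_\alpha)$ must lie at planar grading exactly $(j_\alpha+1, i_\alpha+1)$ and must again be of $a$-type. Combined with $\iota_K^4 \sim \id$, this forces off-diagonal squares to pair up into mirror pairs swapped by $\iota_K$, while on-diagonal squares and the staircase are each setwise preserved; for the staircase, the same argument shows that its two legs are interchanged to leading order.

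With the coarse permutation fixed, the remaining task is to eliminate lower-order terms of $\iota_K$ through basis changes of the form $a_\alpha \mapsto a_\alpha + (\text{lower-filtration corrections})$. On a pair of mirror squares, once the leading terms are standard, the chain-map condition $\iota_K \partial = \partial \iota_K$ propagates the normalization down to $b_\alpha, c_\alpha, Ue_\alpha$. Imposing $\iota_K^2 \sim \sigma$ then completes the pinning: for a thin complex, the Sarkar involution simplifies via $\sigma = \id + U^{-1} \Phi \Psi$ to $\sigma(a_\alpha) = a_\alpha + e_\alpha$ with $\sigma$ trivial on all other generators, and only the standard square map satisfies this identity up to homotopy. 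The same strategy applied to the staircase recovers the standard reflection across $i=j$, and an additional argument absorbs redundant on-diagonal squares (any two can be combined, or one combined with the staircase, via a change of basis consistent with skew-filteredness), leaving at most one.

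The principal obstacle is ensuring that the cascade of basis changes in the last step actually terminates and is mutually compatible. Each cancellation at a given filtration level may introduce new lower-order corrections, both across summands and within a single summand, and one must verify inductively that an appropriate ordering of the changes does not spoil either the skew-filtered property or the identity $\iota_K^2 \sim \sigma$. The rank-one hypothesis on the horizontal and vertical homologies guaranteed by thinness is precisely what prevents the induction from looping back, since it rules out the kinds of acyclic extensions that would otherwise obstruct termination.
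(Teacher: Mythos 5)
The paper does not reprove this proposition; it cites Hendricks--Manolescu, but it does tell you the one fact that makes their argument work: for a thin knot, \emph{grading considerations} force $\iota_K$ to interchange the planar gradings $(i,j)$ and $(j,i)$ exactly, and force the homotopy $H$ in $\iota_K^2 = \sigma + \partial H + H\partial$ to vanish, so that $\iota_K^2 = \sigma$ on the nose. Your proposal misses both of these. You attribute the claim that $\iota_K(a_\alpha)$ lands at planar grading $(j_\alpha+1, i_\alpha+1)$ to ``acyclicity of the summand,'' but acyclicity is not what does the work. The correct reason is thinness: a generator at $(i,j)$ has Maslov grading $i+j-k$, a grading-preserving skew-filtered map sends it to a sum of generators at $(i',j')$ with $i' \leq j$ and $j' \leq i$, and thinness forces $i' + j' = i + j$, whence $i' = j$ and $j' = i$. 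Once you see this, there are \emph{no} ``lower-filtration corrections'' to eliminate, no cascade of basis changes, and no termination question: every term of $\iota_K$ already sits on the reflected filtration level, and the only remaining ambiguity is a finite within-level change of basis, not an induction over filtration degree. The ``principal obstacle'' you flag in your last paragraph is a phantom introduced by this misdiagnosis, and the reason you offer for termination (rank-one horizontal and vertical homology) is doing a different job entirely --- it is the hypothesis of Petkova's lemma needed to get the decomposition of the underlying complex, not an input to any basis-change induction.

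The second gap is that you write $\iota_K^2 \sim \sigma$ throughout, when the whole point of the thin case is that the homotopy is identically zero. Any such $H$ raises Maslov grading by one while respecting both filtrations, and the same count --- a generator at $(i,j)$ has Maslov $i+j-k$, so there is nothing of Maslov $i+j-k+1$ at filtration level $\leq (i,j)$ --- forces $H=0$. This is precisely what the present paper reproves in Lemma~\ref{lemma:simplify} for the pretzel complexes, which are thin apart from four exceptional generators; tracing through that proof gives the model for the argument you should be running here. Without the two exact identities, your ``pinning'' step is underdetermined: $\iota_K^2 \sim \sigma$ plus skew-filteredness does not in general single out the standard square map and staircase reflection up to basis change, which is exactly why the proposition is restricted to thin knots in the first place.
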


The key fact used in the proof of Proposition \ref{prop:thiniota} is that for a thin complex, grading considerations ensure that $\iota_K$ interchanges the planar gradings $(i,j)$ and $(j,i)$ and that ${\iota_K}^2 = \sigma$ on the nose (that is, the filtered chain homotopy $H$ relating them must be zero). Hendricks and Manolescu use Proposition \ref{prop:thiniota} to compute the involutive correction terms of thin knots \cite[Proposition 8.2]{HM:involutive}.

We conclude this subsection with a look at the involutive concordance invariants of the example complexes introduced so far, all of which fall into one of the special cases above.

\begin{example} Let $K$ be the right-handed trefoil, with a representative of $\CFKi(K)$ as in Figure \ref{fig:rightTrefoil}. The automorphism $\iota_K$ is given by
\[
\iota_K(b)=U^{-1}c \qquad \iota_K(c) = Ub \qquad \iota_K(a)=a.
\]
The right-handed trefoil $K$ is an $L$-space knot with $n(K)=1$; therefore we have
\[V_0(K)=\Vl_0(K)=\Vu_0(K) = 1. \]

\begin{figure}
    \centering
    \begin{tikzpicture}
    \node(1)at(0,0.2){$[c+Qa]$};
    \path[->][bend right = 50](-.1,-.1)edge(-.1,-1.7);
    \path[->][dashed](.1,-.1)edge(1.4,-0.9);
    \node(2)at(0,-2){$[Uc+QUa]$};
    \path[->][bend right = 50](-.1,-2.3)edge(-.1,-3.8);
    \path[->][dashed](.1,-2.2)edge(1.4,-3);
    \node(3)at(0,-4){$[U^2c+QU^2a]$};
    \path[->][dashed](.1,-4.2)edge(1.4,-5);
    \node(5)at(2,-1){$[Qc]$};
    \path[->][bend left = 50](2.1,-1.2)edge(2.1,-2.7);
    \node(6)at(2,-3){$[QUc]$};
    \path[->][bend left = 50](2.1,-3.2)edge(2.1,-4.7);
    \node(7)at(2,-5){$[QU^2c]$};
    \node(8)at(1,-6){$\bf \cdots$};
    \end{tikzpicture}
    \caption{The homology $H_*(AI_0^-)$ for the right-handed trefoil, in terms of the representative for the filtered chain homotopy equivalence class for $\CFKi(K)$ of Figure \ref{fig:rightTrefoil}. Curved lines denote the action of the variable $U$, and dashed lines denote the action of the variable $Q$. The element $[c+Qa]$ lies in homological grading $-1$ and the element $[Qc]$ lies in grading $-2$.} \label{fig:rhtAI0}
\end{figure}
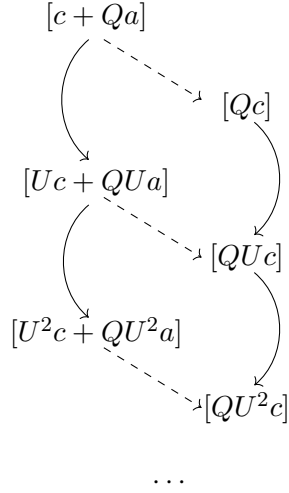

\noindent In more detail, the homology of $H_*(A_0^-)$ is isomorphic to $\mathbb F_{(-2)}[U]$ generated over $\mathbb F[U]$ by the maximally-graded element $[b]$ in the tower, with the consequence that
\[ V_0(K) = -\frac{1}{2}\gr[b] = -\frac{1}{2}(-2)=1. \]
The homology of the mapping cone 
\[ AI_0^- = \Cone(A_0^- \xrightarrow{Q(1+\iota_K)} QA_0^-[-1])\] 
is shown in Figure~\ref{fig:rhtAI0}; we see that as an $\mathbb F[U]$-module,
\[H_*(AI_0^-) \simeq \F_{(-1)}[U] \oplus \F_{(-2)}[U]\]
where the tower summand $\F_{(-1)}[U]$ is generated by the element $[c+Qa]$ and the tower summand $\F_{(-2)}[U]$ is generated by $[Qc]=Q[c+Qa]$. Here we have that $[c+Qa]$  is a maximally-graded element which is not $U$-torsion and such that $U^n[c+Qa]$ is never in the image of $Q$, so that
\[
\Vl_0(K) = -\frac{1}{2}(\gr[c+Qa]-1) = -\frac{1}{2}(-1-1) =1\]
and likewise $[Qc]$ is a maximally-graded element which is not $U$-torsion and lies in the image of $Q$, such that
\[
\Vu_0(K) = -\frac{1}{2}(\gr[Qc])=-\frac{1}{2}(-2)=1.\]
\begin{figure}
     \centering
    \begin{tikzpicture}
    \node(1)at(0,0.2){$[b+Uc]$};
    \path[->][bend right = 50](-.1,-.1)edge(-.1,-1.7);
    \path[->][dashed](.1,-.1)edge(1.4,-0.9);
    \node(2)at(0,-2){$[Ub+U^cc)]$};
    \path[->][bend right = 50](-.1,-2.3)edge(-.1,-3.8);
    \path[->][dashed](.1,-2.2)edge(1.4,-3);
    \node(3)at(0,-4){$[U^2b+Uc]$};
    \path[->][dashed](.1,-4.2)edge(1.4,-5);
    \node(4)at(2.5,1){$[a]$};
    \path[->][bend left = 50](2.6,0.7)edge(2.6,-0.7);
    \path[->][dashed](2.9,.8)edge(4.1,.1);
    \node(5)at(2.5,-1){$[Qb+QUc]$};
    \path[->][bend left = 50](2.6,-1.2)edge(2.6,-2.7);
    \node(6)at(2.7,-3){$[QUb+QU^2c]$};
    \path[->][bend left = 50](2.6,-3.2)edge(2.6,-4.7);
    \node(7)at(2.8,-5){$[QU^2b+QU^3c]$};
    \node(9)at(4.5,0){$[Qa]$};
    \node(8)at(1,-6){$\bf \cdots$};
    \end{tikzpicture}
     \caption{The homology $H_*(AI_0^-)$ for the left-handed trefoil, in terms of the representative for the filtered chain homotopy equivalence class for $\CFKi(K)$ of Figure \ref{fig:leftTrefoil}. Curved lines denote the action of the variable $U$, and dashed lines denote the action of the variable $Q$. The element $[b+Uc]$ lies in homological grading $1$ and the element $[a]$ lies in homological grading $2$; the element $[Qa]$ lies in homological grading $1$.}
     \label{fig:lhtAI0}
 \end{figure}

For the left-handed trefoil $\overline{K}$ with $\CFKi(\overline{K})$ as in Figure~\ref{fig:leftTrefoil}, we have
\[ \iota_K(b)=Uc \qquad \iota_K(c)=U^{-1}b \qquad \iota_K(a)=a.\]
Since $\overline{K}$ is the mirror of an $L$-space knot with $n(K)=1$ we have
\[
V_0(\overline{K})=\Vl_0(\overline{K})=0, \qquad \Vu_0(\overline{K})=-1.
\]
More precisely, we see that $H_*(A_0^-) \simeq \F_{(0)}[U]\oplus \F_{(1)}$, where the summand $\F_{(0)}[U]$ is generated over $\F[U]$ by $[b+Uc]$ and the summand $\F_{(1)}$ has an $\F$-basis $[a]$. We have that the homology class $[b+Uc]$ is a maximally-graded element which is not $U$-torsion, and $\gr([b+Uc])=0$, so $V_0(K)=0$. 

The homology of the mapping cone complex $AI_0^-$ is shown in Figure~\ref{fig:lhtAI0}. We see that as an $\mathbb F[U]$-module we have
\[ H_*(AI_0^-) \simeq \F_{(1)}[U] \oplus \F_{(2)}[U] \oplus \F_{(1)}\]
where the summand $\F_{(1)}[U]$ is generated over $\F[U]$ by $[b+Uc]$, the summand $\F_{(2)}[U]$ is generated over $\F[U]$ by $[a]$, and the summand $\F_{(1)}$ has an $\F$-basis $[Qa]$. Note, in particular, that $U[a]=[Ua] = [Qb+QUc]$, since \[\partial^{\iota}(b) = \partial(b) + Q(1+\iota_K)(b) = Ua + Q(b+Uc).\]
Therefore, in the homology $H_*(AI_0^-)$, the element $[b+Uc]$ is a maximally-graded element which is not $U$-torsion and for which no $U$-power lies in the image of $Q$, so
\[
\Vl_0(\overline{K}) = -\frac{1}{2}(\gr[b+Uc])=-\frac{1}{2}(1-1)=0\]
\noindent whereas $[a]$ is a maximally-graded element which is not $U$-torsion and such that $U[a]=[Q(b+Uc)]$, so
\[
\Vu_0(\overline{K})= -\frac{1}{2}(\gr[a])=-\frac{1}{2}(2)=-1.\]\end{example}

\begin{figure}
    \centering
    \begin{tikzpicture}
    \node(0)at(4,2){$[e]$};
    \node(1)at(0,0.2){$[Ux + Qc]$};
    \path[->][bend right = 50](-.1,-.1)edge(-.1,-1.7);
    \path[->][dashed](.1,-.1)edge(1.4,-0.9);
    \node(2)at(0,-2){$[U^2x + QUc]$};
    \path[->][bend right = 50](-.1,-2.3)edge(-.1,-3.8);
    \path[->][dashed](.1,-2.2)edge(1.4,-3);
    \node(3)at(0,-4){$[U^3x + QU^2c]$};
    \path[->][dashed](.1,-4.2)edge(1.4,-5);
    \node(4)at(2,1){$[Qx]$};
    \path[->][bend left = 50](2.1,0.7)edge(2.1,-0.7);
    \node(5)at(2,-1){$[QUx]$};
    \path[->][bend left = 50](2.1,-1.2)edge(2.1,-2.7);
    \node(6)at(2,-3){$[QU^2x]$};
    \path[->][bend left = 50](2.1,-3.2)edge(2.1,-4.7);
    \node(7)at(2,-5){$[QU^3x]$};
    \node(8)at(1,-6){$\bf \cdots$};
    \end{tikzpicture}
\caption{The homology $H_*(AI_0^-)$ for the figure-eight knot, in terms of the representative for the filtered chain homotopy equivalence class for $\CFKi(K)$ of Figure \ref{fig:figureEight}. Curved lines denote the action of the variable $U$, and dashed lines denote the action of the variable $Q$. The element $[Ux+Qc]$ lies in homological grading $-1$ and the element $[Qx]$ lies in homological grading $0$; the element $[e]$ lies in homological grading $1$.}
\label{fig:fig8AI0}
\end{figure}

\begin{example}Let $K$ be the figure-eight knot, with $\CFKi(K)$ as in Figure~\ref{fig:figureEight}. The automorphism $\iota_K$ is given by 
\[
\iota_K(c)= b \qquad \iota_K(b)=c \qquad \iota_K(e) = e \qquad \iota_K(a)= a + x \qquad \iota_K(x) = x + e.
\]
We see that the homology $H_*(A_0^-) \simeq \F_{(0)}[U]$ generated by the element $[x]$. Thus $[x]$ is a maximally graded element which is not $U$-torsion, such that $V_0(K) = -\frac{1}{2}\gr[x] = 0.$ The homology $H_*(AI_0^-)$ appears in Figure~\ref{fig:fig8AI0}. We see that as an $\F[U]$-module,
\[H_*(AI_0^-) \simeq \F_{(-1)}[U] \oplus \F_{(0)}[U] \oplus \F_{(1)}\]
where the summand $\F_{(-1)}[U]$ is generated over $\F[U]$ by $[Ux+Qc]$, the summand $\F_{(0)}[U]$ is generated over $\F[U]$ by $[Qx]$, and the summand $\F_{(1)}$ is generated by $[e]$. Note in particular that $Q[Ux+Qc] = [QUx] = U[Qx]$. Then the element $[Ux+Qc]$ is a maximally-graded element which is not $U$-torsion and for which no $U$-power lies in the image of $Q$, so
\[
\Vl_0(\overline{K}) = -\frac{1}{2}(\gr[Ux+Qc])=-\frac{1}{2}(-1-1)=1\]
\noindent whereas $[Qx]$ is a maximally-graded element which is not $U$-torsion, so
\[
\Vu_0(\overline{K})= -\frac{1}{2}(\gr[Qx])=-\frac{1}{2}(0)=0.\] \end{example}

\section{Involutive invariants of $(-2,m,n)$ pretzel knots}

\subsection{The complex $\CFK^{\infty}(P(-2,m,n))$}

In this subsection we review Goda, Matsuda, and Morifuji's  computation of the knot Floer homology of $P(-2,m,n)$ \cite[Section 5]{GMM} and prove that there is a change of basis which allows us to simplify the complex. Let $m \geq n \geq 3$. For our proof, we will require a description of the generators of the complex and their filtration levels, some information about the differentials of particular generators, and the final computation of knot Floer homology.

Following \cite{GMM}, we fix the following notation. Let $g = \frac{m + n}{2}$ be the genus of the pretzel knot, and set $m' = \frac{m - 3}{2},$ $ n' = \frac{n - 3}{2}$,
 \[ \gamma(g) =\begin{cases} 
      1 - \frac{g-1}{2} & g \; \;\mathrm{ odd} \\
      1 - \frac{g}{2} & g \; \; \mathrm{even}
   \end{cases} 
  \qquad \text{and} \qquad
   \delta(g) = \begin{cases} 
     \frac{g-1}{2} & g \; \;\mathrm{ odd} \\
      \frac{g}{2} - 1 & g \; \; \mathrm{even}
   \end{cases}.
\]

\noindent With respect to these choices, there is a representative for the complex $\CFK^{\infty}(K)$ (arising, indeed, from a genus one Heegaard diagram) which has a basis as an $\mathbb F$-vector space as follows, where $t \in \mathbb Z$ is arbitrary. Setting $t=0$ returns a set of generators of $\CFKi(K)$ over $\mathbb F[U,U^{-1}]$.
\begin{align*}
&[y_1,;t + \gamma(g) - 1, t + \delta(g) + 1] \\ &[y_2;t + \gamma(g) - 1, t + \delta(g)]\\ 
&[y_3;t + \delta(g), t + \gamma(g) - 1] \\ &[y_4; t + \delta(g) + 1, t + \gamma(g) -1]\\
&[x_{2p+1,2q+1};t + \gamma(g) + p + q + 1, t + \delta(g) - p - q] \;\; \text{ for } 0 \leq p \leq n', 0 \leq q \leq m' \\
&[x_{2p+1,2q};t + \gamma(g) + p + q, t + \delta(g) - p - q] \;\; \text{ for } 0 \leq p \leq n', 1 \leq q \leq m' \\
&[x_{2p,2q+1};t + \gamma(g) + m' + p - q, t + \delta(g) - m' - p - q] \;\; \text{ for } 1 \leq p \leq n', 0 \leq q \leq m' \\
&[x_{2p,2q};t + \gamma(g) + m' + p - q, t + \delta(g) - m' - p + q -1] \;\; \text{ for } 1 \leq p \leq n', 1 \leq q \leq m'
\end{align*}

We refer to the generators $y_i$ as the \emph{exceptional generators} and the generators $x_{k, \ell}$ as the \emph{ordinary generators}. One may straightforwardly check that the ordinary generators lie strictly between the lines $j - i = g-1$ and $j - i = 1-g$ and that $[y_2; t+ \gamma(g)-1, t+ \delta(g)]$ and $[y_3; t+ \delta(g), t+ \gamma(g)-1]$ lie on these lines respectively. Furthermore, $[y_1;t + \gamma(g) - 1, t + \delta(g) + 1]$ lies on the line $j - i = g$ and $[y_4;t + \delta(g) + 1, t + \gamma(g) - 1]$ lies on the line $j-i = -g.$  

We summarize the important aspects of the differential below.

\begin{lemma}\cite[Section 5]{GMM} With respect to the basis for $\CFK^{\infty}(P(-2,m,n))$ above, the differentials of the exceptional generators are
\begin{itemize}
    \item $\partial [y_1;i,j] = [y_2;i,j-1]$
    \item $\partial [y_2;i,j] = 0$
    \item $\partial [y_3;i,j] = 0$
    \item $\partial [y_4;i,j] = [y_3;i-1,j].$
\end{itemize}
Furthermore, the elements whose differential, written as a sum of the generators above, includes an exceptional generator are
\begin{itemize}
    \item $\partial [x_{1,1};i,j] = [x_{2,m-2};i,j-1] + [x_{1,2};i,j-1] + [y_2;i-2,j]$
    \item $\partial [x_{n-2,m-2};i,j] = [x_{n-3,1};i-1,j] + [x_{n-2,m-3};i-1,j] + [y_3;i,j-2].$
     
\end{itemize}
For all other ordinary generators $[x;i,j]$, the differential is a sum of elements $[w; i-1,j]$ and $[z;i,j-1]$; that is, any arrow appearing in the differential is either horizontal or vertical of length one.
\end{lemma}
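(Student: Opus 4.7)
Since $P(-2,m,n)$ is a $(1,1)$-knot, the plan is to work from an explicit doubly-pointed Heegaard diagram on the torus. I would begin by drawing the genus-one Heegaard diagram $(T^2, \alpha, \beta, z, w)$ constructed in \cite{GMM}, lifted to the universal cover $\R^2$ where $\alpha$-translates are horizontal lines and $\beta$-translates are lines of rational slope determined by the pretzel parameters $m, n$. The intersection points $\alpha \cap \beta$ constitute the generators of $\widehat{\CFK}$, and the explicit correspondence with the symbols $y_i$ and $x_{k,\ell}$ comes from the labeling conventions in \cite{GMM}.

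First, I would verify the planar filtration levels. The relative Alexander grading between two intersection points $p$ and $q$ equals $n_z(\phi) - n_w(\phi)$ for any Whitney disk $\phi$ from $p$ to $q$, and the relative Maslov grading is computed from the Maslov index minus $2n_w(\phi)$. Starting from an absolute normalization (e.g.\ at a generator lying on a symmetry axis of the diagram) and working along short chains of bigons, the positions of all generators in the $(i,j)$-plane --- including the shifts by $\gamma(g)$ and $\delta(g)$ --- follow. This step is essentially bookkeeping.

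Next, I would compute the differentials by counting embedded bigons in $T^2 \setminus \{w\}$. In a $(1,1)$-diagram this is particularly manageable, because every bigon lifts to an embedded region in $\R^2$ bounded by segments of parallel $\alpha$ and $\beta$ lines. Near each corner intersection point $y_1, y_2, y_3, y_4$ one expects to see a single short bigon to an adjacent intersection point; this yields $\partial [y_1;i,j] = [y_2;i,j-1]$, $\partial [y_4;i,j] = [y_3;i-1,j]$, and $\partial [y_2;i,j] = \partial [y_3;i,j] = 0$, with the filtration shifts encoding whether the bigon crosses $z$ or $w$. (I note that the third bullet in the statement appears to repeat $y_1$ and should likely read $\partial[y_3;i,j]=0$, parallel to $y_2$.) For $x_{1,1}$ and $x_{n-2,m-2}$, which are the ordinary generators nearest to the exceptional corners, the short bigons give the $x$-generator terms in the claimed differentials, while a longer bigon wrapping partway around the diagram connects them to $y_2$ or $y_3$; its basepoint crossings explain the filtration shifts $(i-2,j)$ and $(i,j-2)$.

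The main obstacle will be ruling out additional bigons, that is, establishing that the listed differentials are \emph{complete} rather than merely that the listed terms appear. This requires a careful global case analysis of the combinatorics of the Heegaard diagram near the exceptional vertices, distinguishing short bigons from longer wrap-around disks and ensuring that no other immersed bigons contribute. For this I would lean on the explicit picture drawn in \cite{GMM}, where the positions of $\alpha$, $\beta$, $z$, and $w$ are arranged so that the global bigon count reduces to a finite check.
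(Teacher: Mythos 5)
This lemma is imported verbatim from Goda--Matsuda--Morifuji \cite[Section 5]{GMM}; the paper offers no independent argument, it simply cites their computation. Your proposed route --- lift the genus-one doubly-pointed Heegaard diagram to the universal cover, read off relative Maslov and Alexander gradings from short Whitney disks, and enumerate embedded bigons --- is exactly the standard method for computing $\CFKi$ of a $(1,1)$-knot, and it is in fact the method GMM use. So your approach is not a \emph{different} route so much as a proposal to redo the source computation rather than cite it. That is perfectly legitimate, and arguably more self-contained, but the trade-off is that the actual content of the lemma (which specific bigons exist, which basepoints they cross, and --- crucially --- that no further index-one disks contribute) is precisely the part you defer to ``the explicit picture drawn in \cite{GMM}.'' As written, the sketch establishes that the listed terms are plausible but does not verify the signs in the filtration shifts (e.g.\ that $x_{1,1}$ maps to $y_2$ at planar level $(i-2,j)$ rather than $(i-1,j-1)$ or $(i,j-2)$), nor does it rule out additional differentials out of or into the exceptional generators; both of those require working with the concrete diagram rather than the general shape of the argument. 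One more thing worth recording: you are right that the third bullet of the lemma is a typo and should read $\partial[y_3;i,j]=0$, parallel to the $y_2$ case. This is consistent with Lemma 3.3's proof, where $\del(y_4)=y_3$, $\del_{\mathrm{vert}}(x_{n-2,m-2})=y_3$, and $y_3$ itself is a cycle.
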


Note that the computation of \cite{GMM} is carried out for $\mathbb Z$ coefficients, so there are some signs in the original that do not appear above. The computation of the knot Floer homology of the pretzel knots carried out from this definition is as follows.

\begin{lemma} \label{lemma:hfk} \cite[Section 5]{GMM} For $m\geq n \geq 3$, the knot Floer homology of $K=P(-2,m,n)$ in positive Alexander gradings is equal to
\[\HFKhat{(K,w)} = 
\begin{cases}
0 & w > g\\
\F_{g+w} & w = g, g - 1\\
0 & w = g -2\\
\F^{g-2-w}_{g-1+w} & g - n \leq w < g - 2\\
\F^{n-2}_{g-1+w} & 0 \leq w < g - n.
\end{cases}
\]
\end{lemma}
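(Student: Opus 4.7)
The plan is to compute $\HFKhat(K, w) = H_*(C\{i = 0, j = w\}, \widehat{\partial})$ for each $w \geq 0$ directly from the generator list and the differentials displayed above, where $\widehat{\partial}$ denotes the part of $\partial$ preserving both $i$ and $j$. A short check gives $\delta(g) - \gamma(g) = g - 2$ in each parity of $g$, so substituting $i = 1 - \gamma(g), -\delta(g), -\delta(g) - 1$ places the four exceptional generators on the vertical axis at Alexander levels $g,\, g-1,\, 1-g,\, -g$ for $y_1, y_2, y_3, y_4$ respectively. In positive Alexander grading we therefore see only $[y_1; 0, g]$ and $[y_2; 0, g-1]$, each isolated at its level; since $\partial y_1 = y_2$ strictly lowers $j$ and $\partial y_2 = 0$, both survive $\widehat{\partial}$, producing one copy of $\F$ at $i = g$ and at $i = g-1$.

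For the ordinary generators I would next solve for when each family meets the line $i = 0$ and read off the resulting $j$, obtaining $j = g - 3 - 2(p+q)$ for $x_{2p+1,2q+1}$, $j = g - 2 - 2(p+q)$ for $x_{2p+1,2q}$, $j = g - 2 - 2m' - 2p$ for $x_{2p,2q+1}$, and $j = g - 3 - 2m' - 2p + 2q$ for $x_{2p,2q}$. A parity check rules out every family at $j = g - 2$, giving $\HFKhat(K, g-2) = 0$. For $w = g - 2 - k$ with $k \geq 1$, one then counts solutions $(p,q)$ within the prescribed ranges $0 \leq p \leq n'$ and $0 \leq q \leq m'$ (with the appropriate shifts by one). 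In the window $g - n \leq w < g - 2$ no index hits its upper bound, so the total count grows linearly as $g - 2 - w$; once $w < g - n$ the upper bounds activate and the count stabilizes at $n - 2$, matching the last two cases of the lemma.

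Two items then remain. For the Maslov grading I would extract the constant Maslov-versus-Alexander offset on the $i = 0$ axis by comparing the known homological grading of $y_1$ with its Alexander grading $g$; this yields subscript $g + i$ at the top two levels and $g - 1 + i$ on the bulk, giving the stated Maslov gradings. The main obstacle is to verify that $\widehat{\partial}$ is actually identically zero on every $C\{i = 0, j = w\}$, so that the chain group equals its own homology and the counts above are truly ranks. The exceptional-involving formulas $\partial x_{1,1}$ and $\partial x_{n-2,m-2}$ stated above each strictly decrease either $i$ or $j$ from the source, so they contribute nothing on the associated graded; but the rest of the differential is omitted in the paper, referred to \cite{GMM}. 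A self-contained proof would either have to reproduce those omitted differentials and check that each strictly drops $i$ or $j$ when its source lies on the line $i = 0$, or else invoke \cite[Section 5]{GMM} verbatim for this vanishing.
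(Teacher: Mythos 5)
The paper does not prove this lemma; it is cited directly from \cite[Section~5]{GMM}, so you are attempting a from-scratch rederivation. Your strategy -- place each generator family on the axis $\{i=0\}$, count by Alexander grading, and show the within-grading differential vanishes -- is the natural one, but the central count is asserted rather than computed, and with the formulas as you have written them it does not come out right. Test $m=n=5$ (so $g=5$, $m'=n'=1$): your formulas give exactly one generator at Alexander grading $+1$ (namely $x_{1,2}$, from $j=g-2-2(p+q)$ with $p+q=1$) but three at $-1$ ($x_{3,2}$, $x_{2,1}$, $x_{2,3}$). A rank-one chain group at $\{i=0,j=1\}$ cannot have homology $\F^{g-2-1}=\F^2$, and the multiset of Alexander gradings fails the $t\leftrightarrow t^{-1}$ symmetry it must have. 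The culprit is $j=g-2-2m'-2p$ for $x_{2p,2q+1}$, which does not involve $q$ at all; you read it off correctly from the displayed coordinate $[x_{2p,2q+1};\,i+\gamma(g)+m'+p-q,\ i+\delta(g)-m'-p-q]$, but that display appears to carry a sign slip. With $i+\delta(g)-m'-p+q$ in the second slot one gets $j=g-2-2m'-2p+2q$, the distribution of Alexander gradings becomes symmetric, and the count at depth $k=g-2-w$ comes out to $k$ for both parities of $k$ (whereas with the version as printed you get only $k/2$ when $k$ is even). A self-contained argument has to notice and correct this, which in practice means working from \cite{GMM} rather than the restatement here. Separately, ``a parity check rules out every family at $j=g-2$'' overstates: parity handles $x_{2p+1,2q+1}$ and $x_{2p,2q}$, while $x_{2p+1,2q}$ and $x_{2p,2q+1}$ have the right parity and are instead excluded by the lower bounds $q\geq 1$ and $p\geq 1$.

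On the concern you raise at the end, there is a cleaner route than reproducing the omitted differentials. The lemma concentrates each $\HFKhat(K,w)$ in a single Maslov degree, so once you check that all generators lying on $\{i=0,j=w\}$ share the same Maslov grading, the within-grading differential must vanish: every component of $\partial$ drops the Maslov grading by one and has nowhere to land. This still requires the Maslov gradings of the $x_{k,\ell}$ from \cite{GMM}, which the paper does not reproduce, so the appeal to \cite{GMM} remains unavoidable -- but no knowledge of the full differential is needed.
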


We may now state our first splitting lemma:
\begin{lemma} \label{lemma:split1} Let $K$ be a pretzel knot $P(-2,m,n)$ with $m\geq n>3$.  Then $CFK^\infty(K)$ is chain homotopy equivalent to the tensor product of $\F[U,U^{-1}]$ with a direct sum of a single staircase complex together with some number of copies of the square complex $C_s$.
\end{lemma}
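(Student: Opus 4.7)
The plan is to apply a sequence of filtered changes of basis to the given generating set in order to reveal a direct sum decomposition. The complex written down by Goda, Matsuda, and Morifuji is not visibly split: for example, $\partial x_{1,1}$ mixes the exceptional generator $y_2$ with the ordinary generators $x_{1,2}$ and $x_{2,m-2}$, and a symmetric mixing occurs at $x_{n-2,m-2}$. Our task is to use the freedom of a filtered basis change to untangle these terms.

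First, I would identify the staircase summand. The four exceptional generators sit at the extreme corners of the support of $\CFK^\infty(K)$, on the diagonals $i - j = \pm \frac{m+n}{2}$, and by the cited lemma from \cite{GMM} we have $\partial y_1 = y_2$ and $\partial y_4 = y_3$. These edges will form the top-left and bottom-right tips of the staircase. Between them I expect a single chain of ordinary generators connecting $y_2$ to $y_3$ through a sequence of alternating horizontal and vertical arrows, where the lengths and positions can be read off the remaining parts of the differential not explicitly quoted in the excerpt. The path will be chosen to match the ranks of $\HFKhat(K, i)$ recorded in the lemma: the rank-one contributions at $i = g, g-1$ anchor the top corners of the staircase, the vanishing at $i = g-2$ forces a length-two step, and the plateau at rank $n-2$ governs the lengths of the steps near the middle.

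Next, I would show that the remaining ordinary generators organize into copies of the square complex $C_s$ via Gaussian elimination. Concretely, whenever a generator $x$ not yet used in the staircase has $\partial x$ containing a leading component $y$ also not yet used, one can cancel the pair $(x,y)$ through a filtered change of basis. Iterating this procedure, moving inward from the boundary of the support rectangle, the complex reduces to the staircase plus a collection of $2 \times 2$ blocks whose differentials match the pattern in Figure \ref{fig:Cs}. The total count of squares is forced by Euler characteristic together with the computation of $\HFKhat$; the hypothesis $n > 3$ guarantees that the middle band of $\HFKhat$ has rank $n - 2 \geq 2$, which is precisely where these squares live.

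The main obstacle is the combinatorial bookkeeping: keeping track of which $x_{p,q}$ enter the staircase versus the squares, verifying that each change of basis respects the bifiltration, and confirming that the surviving differentials realize the $C_s$ pattern exactly rather than producing longer acyclic pieces. A careful ordering of the cancellations, proceeding inward from the corners, should keep the induction local; once the staircase length is pinned down by matching the homology, the remaining structure is determined.
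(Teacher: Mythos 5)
Your outline misses the key structural lever the paper actually uses: quotienting away the exceptional generators to get a \emph{thin} complex and then invoking Petkova's structure theorem. Concretely, the paper observes that $[y_1]$ generates the horizontal homology and $[y_4]$ the vertical homology, then passes to the quotient $C'$ of $\CFK^\infty(K)$ by the subcomplex spanned by $y_1,y_2,y_3,y_4$. It checks that $C'$ has rank-one vertical and horizontal homology (now generated by $[x_{1,1}]$ and $[x_{n-2,m-2}]$) and that $\HFKhat(C')$ is the thin part of $\HFKhat(C)$, so $C'$ is a thin complex. Lemma~\ref{lemma:Petkova} then decomposes $C'$ as a staircase with length-one steps plus boxes, with no hand-done cancellation needed, and the case analysis from Petkova's proof pins down that the staircase is negative with $x_{n-2,m-2}$ at its bottom corner. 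Finally the paper re-attaches the exceptional generators to extend the staircase by two steps of length two. Your approach --- tracing a staircase through the ordinary generators directly and cancelling the rest by hand ``inward from the corners'' --- would require knowing the full differential on the ordinary part, which the paper (and the portion of \cite{GMM} quoted) does not provide and which the published argument deliberately avoids needing.

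There are also a couple of smaller inaccuracies worth flagging. You say the lengths of the interior steps are governed by the rank-$(n-2)$ plateau in $\HFKhat$; in fact every interior step of the staircase has length one, and the plateau controls the \emph{number of boxes} on each diagonal rather than step lengths. The two length-two steps arise precisely from gluing the exceptional generators onto the thin staircase in $C'$; they are not a structural feature forced on the ordinary part by the vanishing of $\HFKhat$ at $i=g-2$. Finally, appealing to Euler characteristic to ``force'' the box count is insufficient on its own: you would still need an argument (as Petkova supplies) that the complex splits off square summands rather than, say, longer zig-zags. So while the intuition that the exceptional generators sit at the staircase tips and the ordinary part contributes the boxes is right, the proof you sketch does not close without the thin-complex reduction.
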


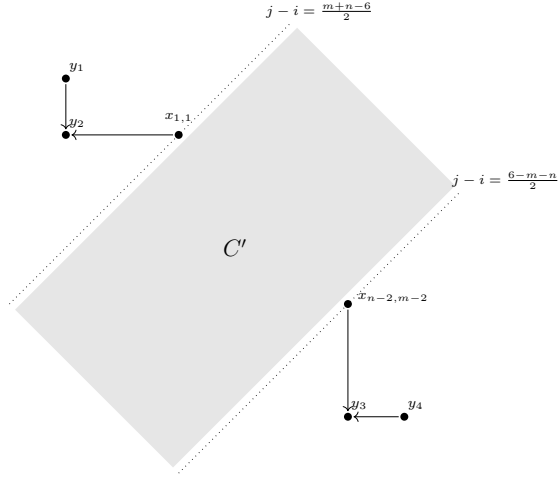
\begin{figure}
\begin{center}
\scalebox{.75}{
\begin{tikzpicture}\tikzstyle{every node}=[font=\tiny] 

\fill(-1,2)circle [radius=2pt];
\fill(-3,2)circle [radius=2pt];
\fill(-3,3)circle [radius=2pt];
\path[->](-1.1,2)edge(-2.9,2);
\path[->](-3,2.9)edge(-3,2.1);

\fill(2,-1)circle [radius=2pt];
\fill(2,-3)circle [radius=2pt];
\fill(3,-3)circle [radius=2pt];
\path[->](2,-1.1)edge(2,-2.9);
\path[->](2.9,-3)edge(2.1,-3);

\node(y1)at (-1,2.3){$x_{1,1}$};
\node(y1)at (-2.8,2.2){$y_2$};
\node(y1)at (-2.8,3.2){$y_1$};

\node(y1)at (2.8,-.9){$x_{n-2,m-2}$};
\node(y1)at (2.2,-2.8){$y_3$};
\node(y1)at (3.2,-2.8){$y_4$};

\path[dotted](-4,-1)edge(1,4);
\path[dotted](-1,-4)edge(4,1);

\fill[fill=gray!20] (-3.9,-1.1) -- (1.1,3.9) -- (3.9,1.1) -- (-1.1,-3.9) -- cycle;

\node(y1)at(0,0){\large{$C'$}};
\node(y1)at(1.5,4.2){$j-i=\frac{m+n-6}2$};
\node(y1)at(4.8,1.2){$j-i=\frac{6-m-n}2$};

\end{tikzpicture}
}
\end{center}
\caption{A schematic diagram of the situation in Lemma \ref{lemma:split1}. The four exceptional generators and all of the differentials involving them are shown. The remaining generators in $C'$ lie strictly between the two dashed lines.}\label{fig:Lemma33idea}
\end{figure}

\begin{proof} As earlier in this section, we let $g = \frac{m+n}{2}$ denote the genus of the pretzel knot. In this proof we let $y_1 = [y_1; -g,0], y_2 = [y_2; -g,-1], y_3 = [y_3;-1,-g], y_4 = [y_4; 0, -g], x_{1,1} = [x_{1,1};2-g, -1],$ and $x_{n-2,m-2} = [x_{n-2,m-2};-1,2-g].$ These gradings are chosen so that $y_1, y_4, x_{1,1}$, and $x_{n-2,m-2}$ lie in homological grading $0$ and $y_2$ and $y_3$ lie in homological grading $-1$. Moreover, we have
\[\del(y_1)=y_2 \qquad \qquad \del(y_4) = y_3.\]
\noindent Furthermore, 
\[ \del_{\mathrm{horz}}(x_{1,1})=y_2 \qquad \qquad \del_{\mathrm{vert}}(x_{n-2,m-2})=y_3.\]

\noindent A schematic of the situation appears in Figure \ref{fig:Lemma33idea}. We first observe that $[y_1]$ is a nontrivial element of the horizontal homology of $\CFKi(K)$, since the only differential involving $y_1$ in the complex is a vertical arrow to $y_2$. Since the horizontal homology has rank one, it in fact must be generated by $[y_1]$. Similarly, the vertical homology is generated by $[y_4].$ Now, let us consider the complex $C'$ which is a quotient of $C =\CFK^{\infty}(K)$ by the subcomplex generated by the elements $y_1,y_2,y_3,$ and $y_4.$ In doing so, we have deleted the horizontal arrow from $x_{1,1}$ to $y_2$ and the vertical arrow from $x_{n-2,m-2}$ to $y_3.$ If we now consider the horizontal homology of $C',$ we see that it is generated by $[x_{1,1}]$ since now $x_{1,1}$ has trivial horizontal differential and the horizontal differential is otherwise unchanged. Similarly, the vertical homology of $C'$ is generated by $[x_{n-2,m-2}].$ In particular, both the vertical and horizontal homologies of $C'$ have rank one. Moreover, the remaining differentials in $C'$ each lower one of the horizontal or vertical planar gradings by exactly one; indeed, $C'$ is a thin complex. Therefore, Petkova's work \cite[Lemma 7]{Petkovacables}, reviewed in Section~\ref{sec:background} as Lemma \ref{lemma:Petkova}, tells us that up to change of basis $C'$ may be decomposed as the tensor product of $\F[U,U^{-1}]$ with a direct sum of a staircase complex and some number of one-by-one boxes.

However, the case analysis of \cite[Section 3]{Petkovacables} also recovers the sign and length of the staircase complex in question. We start by considering an element lying on the bottom-most occupied diagonal line $j-i=k$ in $C'$, which in this case is $x_{n-2,m-2}$ (or any of its $U$-translates). This element does not lie in the image of a vertical differential, which corresponds to Case 2 of Lemma 7 of Petkova \cite[Section 3]{Petkovacables}; in particular, $x_{n-2,m-2}$ must form the lower right corner of a negative staircase summand. Hence we can split off a staircase complex whose lower-right corner is $x_{n-2,m-2}$. By symmetry of the staircase across the main diagonal, the upper left corner must lie in planar grading $\left(2-g,-1\right)$ and have homological grading $0$; the only element in $C'$ satisfying this is $x_{1,1}$.

Now we include $C'$ back into $C.$ We see that $x_{1,1}$ has a horizontal length two arrow to $y_2$ and $x_{n-2,m-2}$ has a vertical length two arrow to $y_3.$ Hence the negative staircase in $C'$ extends to a negative staircase in $C$ which includes the four exceptional generators (and now has two steps of length two). The conclusion of the lemma follows. 
\end{proof}

See Figure \ref{fig:Pn299} for the example of the pretzel knot $P(-2,9,9)$ after this change of basis. Note that since the change of basis of the preceding lemma affected only to the ordinary generators, we may continue to distinguish between the four exceptional generators and the remaining ordinary generators in the subsection that follows.

\begin{figure}
\begin{center}
\scalebox{.75}{
\begin{tikzpicture}\tikzstyle{every node}=[font=\tiny] 
\path[->][dotted](0,-7)edge(0,7);
\path[->][dotted](-7,0)edge(7,0);

\node at (-.25,6.75){$j$};
\node at (6.75,-.25){$i$};

\node(y1)at (6,6){$\iddots$};
\node(y1)at (-5.5,-5.5){$\iddots$};

\fill(-4,5)circle [radius=2pt];
\fill(-4,4)circle [radius=2pt];
\fill(-2,4)circle [radius=2pt];
\fill(-2,3)circle [radius=2pt];
\fill(-1,3)circle [radius=2pt];
\fill(-1,2)circle [radius=2pt];
\fill(0,2)circle [radius=2pt];
\fill(0,1)circle [radius=2pt];
\fill(1,1)circle [radius=2pt];
\fill(1,0)circle [radius=2pt];
\fill(2,0)circle [radius=2pt];
\fill(2,-1)circle [radius=2pt];
\fill(3,-1)circle [radius=2pt];
\fill(3,-2)circle [radius=2pt];
\fill(4,-2)circle [radius=2pt];
\fill(4,-4)circle [radius=2pt];
\fill(5,-4)circle [radius=2pt];

\fill(-1.85,2.15)circle [radius=2pt];
\fill(-1.85,2.85)circle [radius=2pt];
\fill(-1.15,2.15)circle [radius=2pt];
\fill(-1.15,2.85)circle [radius=2pt];

\fill(-.85,1.15)circle [radius=2pt];
\fill(-.85,1.85)circle [radius=2pt];
\fill(-.15,1.15)circle [radius=2pt];
\fill(-.15,1.85)circle [radius=2pt];

\fill(.15,.15)circle [radius=2pt];
\fill(.15,.85)circle [radius=2pt];
\fill(.85,.15)circle [radius=2pt];
\fill(.85,.85)circle [radius=2pt];

\fill(1.15,-.85)circle [radius=2pt];
\fill(1.15,-.15)circle [radius=2pt];
\fill(1.85,-.85)circle [radius=2pt];
\fill(1.85,-.15)circle [radius=2pt];

\fill(2.15,-1.85)circle [radius=2pt];
\fill(2.15,-1.15)circle [radius=2pt];
\fill(2.85,-1.85)circle [radius=2pt];
\fill(2.85,-1.15)circle [radius=2pt];

\node(y1)at (-4.8,4.2){$z_8^1$};
\node(y1)at (-4.75,3.2){$z_7^1$};
\node(y1)at (-2.8,3.2){$z_6^1$};
\node(y1)at (-2.75,2.2){$z_5^1$};
\node(y1)at (-1.8,1.8){$z_4^1$};
\node(y1)at (-1.75,1.2){$z_3^1$};
\node(y1)at (-.8,.8){$z_2^1$};
\node(y1)at (-.75,.2){$z_1^1$};
\node(y1)at (.2,-.2){$z_0$};
\node(y1)at (.25,-.8){$z_1^2$};
\node(y1)at (1.2,-1.2){$z_2^2$};
\node(y1)at (1.25,-1.8){$z_3^2$};
\node(y1)at (2.2,-2.2){$z_4^2$};
\node(y1)at (2.25,-2.8){$z_5^2$};
\node(y1)at (3.2,-2.8){$z_6^2$};
\node(y1)at (3.25,-4.8){$z_7^2$};
\node(y1)at (4.2,-4.8){$z_8^2$};

\path[->](-4,4.9)edge(-4,4.1);
\path[->](-2.1,4)edge(-3.9,4);
\path[->](-2,3.9)edge(-2,3.1);
\path[->](-1.1,3)edge(-1.9,3);
\path[->](-1,2.9)edge(-1,2.1);
\path[->](-.1,2)edge(-.9,2);
\path[->](0,1.9)edge(0,1.1);
\path[->](.9,1)edge(.1,1);
\path[->](1,.9)edge(1,0.1);
\path[->](1.9,0)edge(1.1,0);
\path[->](2,-.1)edge(2,-.9);
\path[->](2.9,-1)edge(2.1,-1);
\path[->](3,-1.1)edge(3,-1.9);
\path[->](3.9,-2)edge(3.1,-2);
\path[->](4.9,-4)edge(4.1,-4);
\path[->](4,-2.1)edge(4,-3.9);

\path[->](-1.25,2.85)edge(-1.75,2.85);
\path[->](-1.25,2.15)edge(-1.75,2.15);
\path[->](-1.15,2.75)edge(-1.15,2.25);
\path[->](-1.85,2.75)edge(-1.85,2.25);

\path[->](-.25,1.85)edge(-.75,1.85);
\path[->](-.25,1.15)edge(-.75,1.15);
\path[->](-.15,1.75)edge(-.15,1.25);
\path[->](-.85,1.75)edge(-.85,1.25);

\path[->](.75,.85)edge(.25,.85);
\path[->](.75,.15)edge(.25,.15);
\path[->](.85,.75)edge(.85,.25);
\path[->](.15,.75)edge(.15,.25);

\path[->](1.75,-.15)edge(1.25,-.15);
\path[->](1.75,-.85)edge(1.25,-.85);
\path[->](1.85,-.25)edge(1.85,-.75);
\path[->](1.15,-.25)edge(1.15,-.75);

\path[->](2.75,-1.15)edge(2.25,-1.15);
\path[->](2.75,-1.85)edge(2.25,-1.85);
\path[->](2.85,-1.25)edge(2.85,-1.75);
\path[->](2.15,-1.25)edge(2.15,-1.75);

\node(y1)at(-.5,1.5){\large{2}};
\node(y1)at(.5,.5){\large{3}};
\node(y1)at(1.5,-.5){\large{2}};


\fill(-3,6)circle [radius=2pt];
\fill(-3,5)circle [radius=2pt];
\fill(-1,5)circle [radius=2pt];
\fill(-1,4)circle [radius=2pt];
\fill(0,4)circle [radius=2pt];
\fill(0,3)circle [radius=2pt];
\fill(1,3)circle [radius=2pt];
\fill(1,2)circle [radius=2pt];
\fill(2,2)circle [radius=2pt];
\fill(2,1)circle [radius=2pt];
\fill(3,1)circle [radius=2pt];
\fill(3,0)circle [radius=2pt];
\fill(4,0)circle [radius=2pt];
\fill(4,-1)circle [radius=2pt];
\fill(5,-1)circle [radius=2pt];
\fill(5,-3)circle [radius=2pt];
\fill(6,-3)circle [radius=2pt];

\fill(-0.85,3.15)circle [radius=2pt];
\fill(-0.85,3.85)circle [radius=2pt];
\fill(-0.15,3.15)circle [radius=2pt];
\fill(-0.15,3.85)circle [radius=2pt];

\fill(0.15,2.15)circle [radius=2pt];
\fill(0.15,2.85)circle [radius=2pt];
\fill(0.85,2.15)circle [radius=2pt];
\fill(0.85,2.85)circle [radius=2pt];

\fill(1.15,1.15)circle [radius=2pt];
\fill(1.15,1.85)circle [radius=2pt];
\fill(1.85,1.15)circle [radius=2pt];
\fill(1.85,1.85)circle [radius=2pt];

\fill(2.15,0.15)circle [radius=2pt];
\fill(2.15,0.85)circle [radius=2pt];
\fill(2.85,0.15)circle [radius=2pt];
\fill(2.85,0.85)circle [radius=2pt];

\fill(3.15,-0.85)circle [radius=2pt];
\fill(3.15,-0.15)circle [radius=2pt];
\fill(3.85,-0.85)circle [radius=2pt];
\fill(3.85,-0.15)circle [radius=2pt];


\path[->](-3,5.9)edge(-3,5.1);
\path[->](-1.1,5)edge(-2.9,5);
\path[->](-1,4.9)edge(-1,4.1);
\path[->](-0.1,4)edge(-0.9,4);
\path[->](0,3.9)edge(0,3.1);
\path[->](0.9,3)edge(0.1,3);
\path[->](1,2.9)edge(1,2.1);
\path[->](1.9,2)edge(1.1,2);
\path[->](2,1.9)edge(2,1.1);
\path[->](2.9,1)edge(2.1,1);
\path[->](3,0.9)edge(3,0.1);
\path[->](3.9,0)edge(3.1,0);
\path[->](4,-0.1)edge(4,-0.9);
\path[->](4.9,-1)edge(4.1,-1);
\path[->](5.9,-3)edge(5.1,-3);
\path[->](5,-1.1)edge(5,-2.9);

\path[->](-0.25,3.85)edge(-0.75,3.85);
\path[->](-0.25,3.15)edge(-0.75,3.15);
\path[->](-0.15,3.75)edge(-0.15,3.25);
\path[->](-0.85,3.75)edge(-0.85,3.25);

\path[->](0.75,2.85)edge(0.25,2.85);
\path[->](0.75,2.15)edge(0.25,2.15);
\path[->](0.85,2.75)edge(0.85,2.25);
\path[->](0.15,2.75)edge(0.15,2.25);

\path[->](1.75,1.85)edge(1.25,1.85);
\path[->](1.75,1.15)edge(1.25,1.15);
\path[->](1.85,1.75)edge(1.85,1.25);
\path[->](1.15,1.75)edge(1.15,1.25);

\path[->](2.75,0.85)edge(2.25,0.85);
\path[->](2.75,0.15)edge(2.25,0.15);
\path[->](2.85,0.75)edge(2.85,0.25);
\path[->](2.15,0.75)edge(2.15,0.25);

\path[->](3.75,-0.15)edge(3.25,-0.15);
\path[->](3.75,-0.85)edge(3.25,-0.85);
\path[->](3.85,-0.25)edge(3.85,-0.75);
\path[->](3.15,-0.25)edge(3.15,-0.75);

\node(y1)at(0.5,2.5){\large{2}};
\node(y1)at(1.5,1.5){\large{3}};
\node(y1)at(2.5,0.5){\large{2}};


\fill(-5,4)circle [radius=2pt];
\fill(-5,3)circle [radius=2pt];
\fill(-3,3)circle [radius=2pt];
\fill(-3,2)circle [radius=2pt];
\fill(-2,2)circle [radius=2pt];
\fill(-2,1)circle [radius=2pt];
\fill(-1,1)circle [radius=2pt];
\fill(-1,0)circle [radius=2pt];
\fill(0,0)circle [radius=2pt];
\fill(0,-1)circle [radius=2pt];
\fill(1,-1)circle [radius=2pt];
\fill(1,-2)circle [radius=2pt];
\fill(2,-2)circle [radius=2pt];
\fill(2,-3)circle [radius=2pt];
\fill(3,-3)circle [radius=2pt];
\fill(3,-5)circle [radius=2pt];
\fill(4,-5)circle [radius=2pt];

\fill(-2.85,1.15)circle [radius=2pt];
\fill(-2.85,1.85)circle [radius=2pt];
\fill(-2.15,1.15)circle [radius=2pt];
\fill(-2.15,1.85)circle [radius=2pt];

\fill(-1.85,0.15)circle [radius=2pt];
\fill(-1.85,0.85)circle [radius=2pt];
\fill(-1.15,0.15)circle [radius=2pt];
\fill(-1.15,0.85)circle [radius=2pt];

\fill(-0.85,-0.85)circle [radius=2pt];
\fill(-0.85,-0.15)circle [radius=2pt];
\fill(-0.15,-0.85)circle [radius=2pt];
\fill(-0.15,-0.15)circle [radius=2pt];

\fill(0.15,-1.85)circle [radius=2pt];
\fill(0.15,-1.15)circle [radius=2pt];
\fill(0.85,-1.85)circle [radius=2pt];
\fill(0.85,-1.15)circle [radius=2pt];

\fill(1.15,-2.85)circle [radius=2pt];
\fill(1.15,-2.15)circle [radius=2pt];
\fill(1.85,-2.85)circle [radius=2pt];
\fill(1.85,-2.15)circle [radius=2pt];


\path[->](-5,3.9)edge(-5,3.1);
\path[->](-3.1,3)edge(-4.9,3);
\path[->](-3,2.9)edge(-3,2.1);
\path[->](-2.1,2)edge(-2.9,2);
\path[->](-2,1.9)edge(-2,1.1);
\path[->](-1.1,1)edge(-1.9,1);
\path[->](-1,0.9)edge(-1,0.1);
\path[->](-0.1,0)edge(-0.9,0);
\path[->](0,-0.1)edge(0,-0.9);
\path[->](0.9,-1)edge(0.1,-1);
\path[->](1,-1.1)edge(1,-1.9);
\path[->](1.9,-2)edge(1.1,-2);
\path[->](2,-2.1)edge(2,-2.9);
\path[->](2.9,-3)edge(2.1,-3);
\path[->](3.9,-5)edge(3.1,-5);
\path[->](3,-3.1)edge(3,-4.9);

\path[->](-2.25,1.85)edge(-2.75,1.85);
\path[->](-2.25,1.15)edge(-2.75,1.15);
\path[->](-2.15,1.75)edge(-2.15,1.25);
\path[->](-2.85,1.75)edge(-2.85,1.25);

\path[->](-1.25,0.85)edge(-1.75,0.85);
\path[->](-1.25,0.15)edge(-1.75,0.15);
\path[->](-1.15,0.75)edge(-1.15,0.25);
\path[->](-1.85,0.75)edge(-1.85,0.25);

\path[->](-0.25,-0.15)edge(-0.75,-0.15);
\path[->](-0.25,-0.85)edge(-0.75,-0.85);
\path[->](-0.15,-0.25)edge(-0.15,-0.75);
\path[->](-0.85,-0.25)edge(-0.85,-0.75);

\path[->](0.75,-1.15)edge(0.25,-1.15);
\path[->](0.75,-1.85)edge(0.25,-1.85);
\path[->](0.85,-1.25)edge(0.85,-1.75);
\path[->](0.15,-1.25)edge(0.15,-1.75);

\path[->](1.75,-2.15)edge(1.25,-2.15);
\path[->](1.75,-2.85)edge(1.25,-2.85);
\path[->](1.85,-2.25)edge(1.85,-2.75);
\path[->](1.15,-2.25)edge(1.15,-2.75);

\node(y1)at(-1.5,0.5){\large{2}};
\node(y1)at(-0.5,-0.5){\large{3}};
\node(y1)at(0.5,-1.5){\large{2}};

\end{tikzpicture}
}
\end{center}
\caption{The complex $\CFKi(P(-2,9,9))$ after a change of basis, with the generators on the staircase labeled using the staircase notation of Section \ref{sec:background}. Here an integer $k$ in the center of a box indicates the presence of $k$ boxes on the corresponding diagonal. In the notation of Lemma \ref{lemma:split1}, we have $y_0=U^4z_8^1$, $y_1=U^4z^1_7$, $x_{1,1}=U^4z^1_6$, and similarly for the bottom half of the complex.}\label{fig:Pn299}
\end{figure}
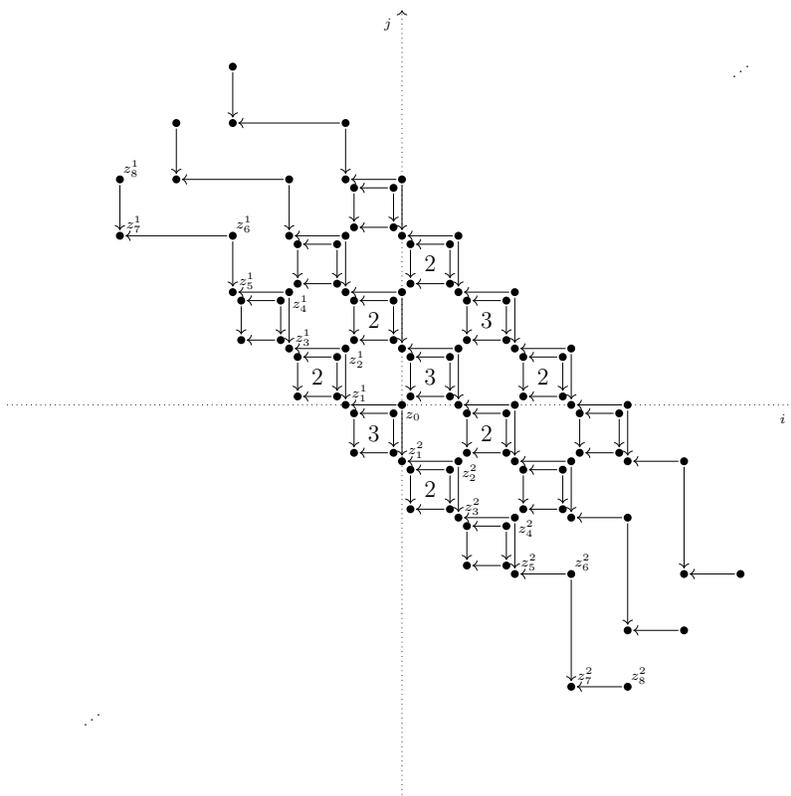

\subsection{Computations for model complexes} \label{subsec:model}

In this subsection we carry out computations for several model complexes. Our computations are modelled on \cite[Section 8]{HM:involutive}. We begin by using the description of the complexes $\CFKi(P(-2,m,n))$ given in the previous section to prove a simplifying lemma.

\begin{lemma} \label{lemma:simplify} Let $K$ be a pretzel knot $P(-2,m,n), m \geq n > 3$, and $C'\otimes \F[U,U^{-1}]$ be the representative for $\CFKi(K)$ of Lemma~\ref{lemma:split1}, consisting of the tensor product of a negative staircase and some number of square complexes with $\F[U,U^{-1}]$. Then
\begin{enumerate}
\item The skew-filtered chain map $\iota_K$ exchanges the planar gradings $(i,j)$ and $(j,i)$. \label{simplify1}
\item If $H$ is a filtered chain homotopy which raises the homological grading by one, $H=0$.  Hence ${\iota_K}^2=\sigma$. \label{simplify2} \end{enumerate} \end{lemma}

\begin{proof}
As previously in this section, let $g = \frac{m+n}{2}$. Let $[x;i,j]$ be a generator of $\CFK^{\infty}(K)$ with homological grading $s$. By the proof of Lemma \ref{lemma:split1}, we see that for any nonexceptional generator, there is some constant $k$ such that $s=i+j-k$. By recalling that the homological grading of $x_{1,1}=[x_{1,1};2-g, -1]$ is zero, we may solve for $k$, determining that for any  non-exceptional generator,
\begin{equation} \label{eq:grading}
s=i+j+g-1.\end{equation}
Furthermore, the non-exceptional generators of even homological grading lie between or on the lines $j-i = g-3$ and $j-i = 3-g$. In particular, we see that
\[ i + g-3 \geq j \geq i + 3-g.\]
Combining this with \eqref{eq:grading}, which we may rearrange to be $j=s-i-g+1$, we obtain
which may be rearranged to
\[ \frac{s}{2}-g+2 \leq i \leq \frac{s}{2}-1.\]
A symmetric computation shows the same is true of $j$, so when $s$ is even we have that
\[ \frac{s}{2}-g+2 \leq i,j \leq \frac{s}{2}-1.\]
On the other hand, the generators of odd homological gradings lie between or on the diagonal lines $j-i=g-4$ and $i-j=4-g$, so the analogous computation shows that
\[ \frac{s+1}{2}-g +2 \leq i, j \leq \frac{s-1}{2} -1.\]

\noindent If $[y;i,j]$ is an exceptional generator, then if $s$ is even either $(i,j)=\left(\frac{s}{2}-g,\frac{s}{2}\right)$ or $(i,j)=\left(\frac{s}{2}, \frac{s}{2}-g\right)$. These cases are $U$-translates of $y_1$ and $y_4$ respectively. If $s$ is odd then we have either $(i,j)=\left(\frac{s+1}{2}-g,\frac{s-1}{2}\right)$ or $(i,j)=\left(\frac{s-1}{2}, \frac{s+1}{2}-g\right)$. These cases are $U$-translates of $y_2$ and $y_3$ respectively. In either case, $s =i+j+g$.

We now prove part (\ref{simplify1}). Suppose we have an ordinary generator $[z; i,j]$ of homological grading $s$. Then $\iota_K([z;i,j])$ is a sum of generators $[x;i',j']$ such that $i'\leq j, j'\leq i$, and $\gr([x,i',j']) = s$. These restrictions imply that for any $[x;i',j']$ an ordinary generator which appears in this sum with nonzero coefficient, we must then have $i'=j$ and $j'=i$. Now suppose that $[y;i',j']$ an exceptional generator which appears in this sum with nonzero coefficient. Then if $s$ is even, one of $i'$ or $j'$ is $\frac{s}{2}$. However, $i,j\leq \frac{s}{2}-1$, so this contradicts $i'\leq j$ and $j'\leq i$. Similarly if $s$ is odd, then one of $i'$ or $j'$ is $\frac{s-1}{2}$, but $i,j \leq \frac{s-1}{2}-1$. Ergo the sum $\iota_K([z; i,j])$ cannot contain any exceptional generators with nonzero coefficient, and indeed $\iota_K([z;i,j])$ must be a sum of ordinary generators $[x;i',j']$ with $i'=j$ and $j'=i$. An analogous but simpler argument  shows that for the four exceptional generators, $\iota_K([y_{\ell};i,j])$ is either $[y_{5-\ell}; j,i]$ or zero. We conclude that $\iota_K$ interchanges the planar gradings $(i,j)$ and $(j,i)$. This proves (\ref{simplify1}). 

The reader is at this point invited to examine Figure \ref{fig:Pn299} as an example and compare it to the proof above, for clarity. The point is that for any generator $[x;i,j]$ in the basis shown, all elements in filtration level $\mathcal F_{j,i}$ with the same homological grading as $[x;i,j]$ have the same type (exceptional or ordinary) as $[x;i,j]$, and therefore grading considerations imply that $\iota_K$ must simply interchange the planar gradings.

We now turn our attention to part (\ref{simplify2}). Let $H$ be a filtered map which raises the homological grading by one. Suppose the map $H$ sends a generator $[z;i,j]$ in homological grading $s$ to a nonzero sum of generators $[x;i',j']$ in homological grading $t=s+1$ with $i'\leq i$ and $j'\leq j$. For any $[x;i',j']$ appearing with nonzero coefficient in this sum, if $[x;i',j']$ is an ordinary generator then  $t=s+1 =i'+j'+g-1$ and if $[x; i',j']$ is an exceptional generator then $t=s+1=i'+j'+g$. We deal with each of these two possibilities separately.

In the first case, if $s+1=i'+j'+g-1$, then since $i'\leq i$ and $j'\leq j$, we have $s+1\leq i+j+g-1$, which implies that $s\leq i+j+ g-2$. This is a contradiction since for any generator $[z;i,j]$, we either have that $s=i+j+g-1$ (for an ordinary generator) or $s=i+j+g$ (for an exceptional generator) is true.  

In the second case, if $s+1=i'+j'+g$, then we see that the generator $[x;i',j']$ is an exceptional generator. As before it follows that $s\leq i+j+g-1$, so we see that the original generator $[z;i,j]$ must be ordinary. Moreover, the inequality must actually be an equality, implying that $i'=i$ and $j'=j$. Therefore, $H$ must send the ordinary generator $[z; i,j]$ to a sum of exceptional generators $[x;i,j]$ in homological grading $s+1$ lying in the original planar grading $(i,j)$. However, no exceptional generators lie in the same planar gradings as ordinary generators anywhere in the chain complex, so this is impossible. This implies that $H=0$, proving (\ref{simplify2}).
\end{proof}

Lemma \ref{lemma:simplify} is the important step in proving that the strong equivalence class of the pair $(\CFK^{\infty}(P(-2,m,n)), \iota_K)$ admits a representative which decomposes into a direct sum of simple complexes preserved by $\iota_K$. Since it is no longer necessary to distinguish the ordinary and exceptional generators, from here on out we will use the notation for staircase complexes introduced in Section \ref{sec:background}.

\begin{lemma} \label{lemma:split2} Let $K$ be a pretzel knot $P(-2,m,n)$ with $m\geq n>3$.  Then the strong equivalence class of $(\CFK^\infty(K),\iota_K)$ admits a representative which decomposes equivariantly as the tensor product of $\F[U,U^{-1}]$ with a direct sum of pairs of square complexes interchanged by the standard square map, and one of four model complexes, according to the values of $m$ and $n$, as follows:
\begin{itemize}
\item When $m\equiv n\equiv 1\pmod 4$, we obtain the model complex $C_1$ consisting of a negative staircase with $n(K)=\frac{m+n-2}{4}$ and a single square complex on the main diagonal, as in Figure~\ref{fig:C1}.
 
\item When $m\equiv 3\pmod 4$ and $n\equiv 1\pmod 4$, we obtain the model complex $C_2$ consisting of a negative staircase with $n(K)=\frac{m+n}{4}$, as in Figure~\ref{fig:C2}.

\item When $m\equiv n\equiv 3\pmod 4$, we obtain the model complex $C_3$ consisting of a negative staircase with $n(K) = \frac{m+n-2}{4}$, as in Figure ~\ref{fig:C3}.

\item When $m\equiv 1\pmod 4$ and $n\equiv 3\pmod 4$, we obtain the model complex $C_4$, consisting of a negative staircase with $n(K)= \frac{m+n}{4}$, as in Figure~\ref{fig:C4}.
\end{itemize}
\end{lemma}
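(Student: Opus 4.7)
The plan is to combine the decomposition from Lemma \ref{lemma:split1} with the structural constraints on $\iota_K$ from Lemma \ref{lemma:simplify}. Lemma \ref{lemma:split1} gives an isomorphism $\CFKi(K) \cong (S \oplus \bigoplus_\alpha C_s^\alpha) \otimes \F[U,U^{-1}]$, where $S$ is a negative staircase with exactly two length-two steps (at the outermost positions, arising from the exceptional generators) and the $C_s^\alpha$ are square complexes. By Lemma \ref{lemma:simplify}, $\iota_K$ interchanges planar gradings $(i,j)$ and $(j,i)$ and satisfies $\iota_K^2 = \sigma$ exactly. These two inputs will let us pair off the squares away from the main diagonal by the standard square map and isolate the remaining model complex.

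First I would show that $\iota_K$ preserves the staircase $S$. The staircase is symmetric about the diagonal $i=j$, and its extremal generators (in particular $y_1$ and $y_4$, at the corners of the complex) occupy filtration levels not reached by any square summand. Since $\iota_K$ is a planar-grading swap, it must send $S$ to itself. Then, arguing as for $L$-space knots, one uses the fact that each non-diagonal filtration level of $S$ contains a unique generator whose mirror filtration level contains a single partner in $S$, to conclude that $\iota_K|_S$ is the standard staircase involution after a change of basis absorbing lower-order filtered corrections.

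Next I would handle the squares. For a square $C_s^\alpha$ with lower-left corner $(i,j)$ and $i \neq j$, its image under $\iota_K$ is a square at $(j,i)$; the symmetry of $\HFKhat$ under $(i,j) \leftrightarrow (j,i)$ guarantees the existence of such a partner in the decomposition. Grading considerations, together with $\iota_K^2 = \sigma$, then force the action of $\iota_K$ on the pair to be the standard square map, by an argument identical to the corresponding step in Proposition \ref{prop:thiniota}. A square with lower-left corner on the diagonal $(i,i)$ is preserved individually; on such a square the constraints uniquely determine $\iota_K$ up to change of basis.

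The hard part will be the final case analysis: verifying that exactly one square lies on the main diagonal when $m \equiv n \equiv 1 \pmod 4$ and none in the other three cases, while simultaneously reading off the correct $n(K)$ for the staircase. To carry this out, I would use the explicit generator list and planar gradings from Section~3.1, together with the change of basis from Lemma \ref{lemma:split1}. The shifts $\gamma(g)$ and $\delta(g)$ depend only on $g \bmod 2$, that is on $m + n \bmod 4$, while the ranges of $p, q$ depend on the parities of $m' = \tfrac{m-3}{2}$ and $n' = \tfrac{n-3}{2}$; together these determine whether the main diagonal passes through the lower-left corner of one of the square summands. A direct check in each residue class, best exhibited by drawing the analogue of Figure~\ref{fig:Pn299} in each case, gives the four model complexes $C_1, \ldots, C_4$ with the staircase lengths and $n(K)$ values claimed in the statement.
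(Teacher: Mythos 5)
Your overall strategy matches the paper's: Lemma~\ref{lemma:split1} gives the abstract splitting into a staircase plus boxes, Lemma~\ref{lemma:simplify} provides the filtration-swapping and $\iota_K^2=\sigma$ constraints, and the pairing-off of boxes away from the main diagonal proceeds as in \cite[Proposition~8.1]{HM:involutive}. All of that is fine and is what the paper does.

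The gap is in the last paragraph, which you correctly flag as the hard part. You propose to determine which diagonals carry boxes (and in particular whether a box remains on the main diagonal) by ``a direct check'' against the explicit generator list from Section~3.1 together with the change of basis from Lemma~\ref{lemma:split1}. This does not go through as stated: Lemma~\ref{lemma:split1} produces the staircase-plus-boxes decomposition via Petkova's change of basis, which is not explicit and does not keep track of where the generators $x_{k,\ell}$ land. So you cannot read box positions off the original filtration data; ``drawing the analogue of Figure~\ref{fig:Pn299}'' for each residue class is an illustrative example, not a proof for arbitrary $m,n$. What you need is a basis-invariant count of boxes per diagonal, and the paper supplies this by comparing the Alexander polynomial of $K$ (which is $\sum_w \chi(\HFKhat(K,w))t^w$ and hence independent of basis) with the staircase's contribution; the difference factors as (a polynomial in $t$) times $(-t+2-t^{-1})$, and the coefficient on each power of $t$ counts boxes on the corresponding diagonal. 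The constant coefficient comes out to $0$ when $m\not\equiv n\pmod4$ and $\frac{n-3}{2}$ when $m\equiv n\pmod4$, which is the key parity that decides whether a single box survives on the main diagonal. You would also want to spell out the value of $n(K)$: since the staircase has two length-two steps and the remaining steps have length one summing to $g=\frac{m+n}{2}$, one gets $u=v-2=\frac{m+n-6}{2}$ and then $n(K)$ by counting vertical steps according to the parity of $u$; your sketch gestures at this but doesn't derive it. Filling in these two pieces (the Alexander-polynomial count of boxes, and the $n(K)$ formula from the step-length sum) would complete the argument.
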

\begin{proof}
The argument that we can split off pairs of square complexes until we are left with a staircase complex and at most one square complex on the main diagonal proceeds as in \cite[Proposition 8.1]{HM:involutive}, using Lemma \ref{lemma:simplify}. It remains to analyze the staircases involved and determine whether we have a square complex remaining on the main diagonal.

We first note that in all cases by Lemma~\ref{lemma:split1} we have a negative staircase summand which begins at the top of the staircase with a vertical step of length one followed by a horizontal step of length two, subsequent to which all steps in the top half of the complex have length one. Let $v$ be the total number of steps in the top half of the complex and $u=v-2$ be the number of steps in the top half of the complex after the first two. Then since the total lengths of the steps in the complex sum to $g=\frac{m+n}{2}$ by projecting the elements of the staircase to their $U$-translates in the $\mathbb F$-subcomplex $C\{i=0\}$ of our representative for $\CFKi(K)$, we have that
\[
u = g-3.
\]
\noindent We see that if $m\equiv n \pmod 4$  then $m+n\equiv 2\pmod 4$, implying that $g=\frac{m+n}{2}$ is odd. Hence $u=g-3$ is an even number, and therefore so is $v$, so the number of steps above the main diagonal is even. However, if it is the case that $m\not\equiv n \pmod 4$, then $4\mid(m+n)$ so there is an odd number of steps above the main diagonal.

Before moving on, we also calculate the number $n(K)$ associated to the negative staircase summand of the complex. If $m \equiv n \parmod{4}$, then $g$ is odd and $u$ is even, and the total lengths of the vertical arrows in the top half of the complex is equal to $\frac{u}{2}+1$, so we have
\[
n(K) = \frac{g-3}{2} + 1 = \frac{g-1}{2} = \frac{m+n-2}{4}.
\]
Conversely if $m \not\equiv n \parmod{4}$, then $u$ is odd, and the total lengths of the vertical arrows in the top half of the complex is $\frac{u+1}{2}+1$, so we have
\[
n(K) = \frac{g-3+1}{2}+ 1 = \frac{g}{2} = \frac{m+n}{4}.
\]

We now turn to the matter of square complexes, which we analyze by considering the Alexander polynomial of the knot, which we recall is equal to the Euler characteristic of the knot Floer homology $\HFKhat(K)$, which is itself the homology of the associated graded of the $\mathbb F$-complex $C\{i=0\}$ inside any representative of $\CFKi(K)$. In general the tensor product of a square complex with its upper right corner on the diagonal $j=i+s$ with $\F[U,U^{-1}]$ contributes a factor of $\pm t^s(-t+2-t^{-1})$ to the Alexander polynomial of the knot. In our case, because the knot Floer complex is thin apart from the four exceptional generators, the sign is always $(-1)^s$. Taking the Euler characteristic of the knot Floer homology in Lemma~\ref{lemma:hfk}, we see the total Alexander polynomial of $K$ is
\begin{align*}
&t^g-t^{g-1}+\sum_{k=1}^{n-3}(-1)^{k-1}k t^{g-k-2}+\sum_{k=n-g}^{g-n}(-1)^{g-k-1}(n-2)t^{k} + \sum_{k=1}^{n-3}(-1)^{k-1}kt^{k+2-g}-t^{1-g}+t^{-g}.
\end{align*}

\noindent The staircase summand contributes $t^g-t^{g-1}+\left(\sum_{k=3-g}^{g-3}(-1)^{k-1}t^k\right)-t^{1-g}+t^{-g}$ to the polynomial.  Therefore, the boxes must contribute the remaining terms, which are:

\[
\sum_{k=1}^{n-4}(-1)^{k}k t^{g-k-3}+\sum_{k=n-g}^{g-n}(-1)^{g-k-1}(n-3)t^{k} + \sum_{k=1}^{n-4}(-1)^{k}kt^{k+3-g}.
\]

\noindent This polynomial factors as
\[\left(\sum_{k=1}^{\frac{n-5}2} kt^{g-2k-3} +\frac{n-3}{2}\sum_{0}^{g-n} t^{g-n-2k}+ \sum_{k=1}^{\frac{n-5}2}(-1)^kkt^{2k+3-g} \right)(-t+2-t^{-1}).
\]
In particular, note that the center sum $\frac{n-3}{2}\sum_{0}^{g-n} t^{g-n-2k}$ can be rewritten as $\frac{n-3}{2}(t^{g-n} + t^{g-n-2} + \dots + t^{n+2-g} + t^{n-g})$. If $m \not\equiv n \parmod{4}$, then the constant term in this polynomial is zero, and we conclude that there are no boxes on the main diagonal in our representative for $\CFKi(K)$. If $m \equiv n \parmod{4}$, then the constant term in this polynomial is $\frac{n-3}{2}$, and we conclude that there are $\frac{n-3}{2}$ boxes on the main diagonal. If $n \equiv 3 \parmod{4}$, this is an even number of boxes, which split off equivariantly in pairs. If $n \equiv 1 \parmod{4}$, this is an odd number of boxes, all but one of which split off equivariantly in pairs, leaving a single box on the main diagonal in the model complex. \end{proof}

\begin{example} In light of the high algebraic complexity of the previous proof, we include a few early cases. First we consider $K_1 = P(-2,5,5)$, an example of the case $m\equiv n \equiv 1 \parmod{4}$. The Alexander polynomial of $K_1$ is
\begin{align*}
\Delta_{K_1}(t) &= t^5-t^4+t^2-2t+3-2t^{-1}+t^{-2}-t^{-4}+ t^{-5} \\
				&= (t^5-t^4+t^2-t+1-t^{-1}+t^{-2}-t^{-4}+t^{-5}) + (-t+2-t^{-1})
\end{align*}
where the two parenthesized terms in the second row correspond in order to the contribution of the staircase and the contribution of a single box on the main diagonal. This gives us the complex of Figure \ref{fig:C1}; in this special case we do not need to split off any pairs of boxes to obtain the model complex shown. There are as promised $\frac{n-3}{2}=1$ boxes on the main diagonal. For a more complicated example of the case $m\equiv n \equiv 1 \parmod{4}$, consider $K_1' = P(-2,9,5)$, which has Alexander polynomial
\begin{align*}
\Delta_{K_1'}(t) &= t^7-t^6+t^4-2t^3+3t^2 - 3t + 3 -3t^{-1} + 3t^{-2} - 2t^{-3} + t^{-4} - t^{-6} + t^{-7}
\end{align*}
After subtracting the staircase contribution
\[t^7-t^6+t^4-t^3+t^2 - t + 1 -t^{-1} + t^{-2} - t^{-3} + t^{-4} - t^{-6} +t^{-7}\]
we are left with			
\begin{align*}			
-t^3 + 2t^2 - 2t + 2 -2t^{-1} + 2t^{-2} - t^{-3} = (-t+2-t^{-1})(t^2+1+t^{-2})
\end{align*}
We see we have a single box on the main diagonal, as promised by $\frac{n-3}{2}=1$, along with a pair of boxes on the diagonals $j-i=1$ and $j-i=-1$ which are interchanged by the standard square map; we split off this pair to obtain a version of the model complex $C_1$ of Figure~\ref{fig:C1}.

Next, let $K_2=P(-2,7,5)$ be an example of the case $m\equiv 3 \parmod{4}$ and $n\equiv 1 \parmod{4}$. Then we have
\[
\Delta_{K_2} = t^{6}-t^{5} + t^3 - 2t^2 + 3t - 3 + 3t^{-1} -2t^{-2} + t^{-3} -t^{-5} + t^{-6}\]
After subtracting the staircase contribution \[t^{6}-t^{5} + t^3 - t^2 + t - 1 + t^{-1} -t^{-2} + t^{-3} -t^{-5} + t^{-6}\] we are left with
\[-t^2+2t-2+2t^{-1}+t^2\]
which factors as
\[
(-t+2-t^{-1})(t+t^{-1})
\]
corresponding to the appearance of a pair of boxes on the diagonals $j-i=1$ and $j-i=-1$, interchanged by the standard square map. After splitting off this pair we obtain the staircase model complex $C_2$ of Figure \ref{fig:C2}.

Next, consider $K_3=P(-2,7,7)$ as an example of the case $n\equiv m \equiv 3 \parmod{4}$. The Alexander polynomial of $K_3$ is
\[ \Delta_{K_3}(t) = t^7 - t^{6}+t^4-2t^3+3t^2-4t+5-4t^{-1}+3t^{-2}-2t^{-3}+t^{-4}-t^{-6}+t^{-7}.\]
After subtracting the staircase contribution \[t^7 - t^{6}+t^4-t^3+t^2-t+1-t^{-1}+t^{-2}-t^{-3}+t^{-4}-t^{-6}+t^{-7}\] we are left with
\[ -t^3+2t^2-3t+4-3t^{-1}+2t^{-2}-t^{-3} = (-t+2-t^{-1})(t^2+2+t^{-2}).\]
We see there are as promised $\frac{n-3}{2}=2$ boxes on the main diagonal, which split off as a pair interchanged by the standard square map. Furthermore, there is a pair of boxes on the diagonals $j-i=1$ and $j-i=-1$ which are interchanged by the standard square map. After splitting off these pairs we are left with the staircase model complex $C_3$ of Figure~\ref{fig:C3}.

Finally, let $K_4 = P(-2, 9,7)$ be an example of the case $m \equiv 1 \parmod{4}$ and $n \equiv 3 \parmod{4}$. The Alexander polynomial of $K_4$ is
\[ \Delta_{K_4} = t^8 - t^7 +t^5 -2t^4 +3t^3 -4t^2 +5t-5+5t^{-1} -4t^{-2} + 3t^{-3} -2t^{-4} + t^5 - t^7 + t^8.\]
After subtracting the staircase contribution \[t^8 - t^7 +t^5 - t^4 + t^3 - t^2 + t - 1 + t^{-1} - t^{-2} +t^{-3} - t^{-4} +t^{-5} - t^{-7} +t^{-8}\] we are left with
\[-t^4 +2t^3 -3t^2 +4t-4+4t^{-1} -3t^{-2} + 2t^{-3} -t^{-4} = (-t + 2 - t^{-1})(t^3+2t+2t^{-1}+t^{-3}).\]
There is a pair of boxes on the diagonals $j-i=3$ and $j-i=-3$ interchanged by the standard square map; two boxes on each of the diagonals $j-i=1$ and $j-i=-1$ interchanged in pairs by standard square maps. After splitting off these summands we are left with the staircase model complex $C_4$ of Figure~\ref{fig:C4}.
\end{example}

\begin{figure}[htb] 
\begin{center}
\scalebox{.75}{
\begin{tikzpicture}\tikzstyle{every node}=[font=\tiny] 
\path[->][dotted](0,-5)edge(0,5);
\path[->][dotted](-5,0)edge(5,0);

\node at (-.25,4.75){$j$};
\node at (4.75,-.25){$i$};

\fill(3,-2)circle [radius=2pt];
\fill(2,-2)circle [radius=2pt];
\fill(0,2)circle [radius=2pt];
\fill(0,1)circle [radius=2pt];
\fill(1,1)circle [radius=2pt];
\fill(1,0)circle [radius=2pt];
\fill(2,0)circle [radius=2pt];
\fill(-2,2)circle [radius=2pt];
\fill(-2,3)circle [radius=2pt];

\fill(.15,.15)circle [radius=2pt];
\fill(.15,.85)circle [radius=2pt];
\fill(.85,.15)circle [radius=2pt];
\fill(.85,.85)circle [radius=2pt];

\node(y1)at (-2.8,2.2){$z_v^1$};
\node(y2)at (-3.2,.8){$z_{v-1}^1$};
\node(y3)at (.8,-3.2){$z_{v-1}^2$};
\node(y4)at (2.2,-2.8){$z_v^2$};
\node(x11)at (-0.8,1.2){$z_2^1$};
\node(x33)at (1.2,-0.8){$z_2^2$};
\node(x1331)at (-.2,0.1){$z_0$};
\node(x1223)at (-1.2,0.1){$z_1^1$};
\node(x3221)at (0,-1.3){$z_1^2$};
\node(x12)at (-.7,-.3){$b$};
\node(x13)at (-.3,-.3){$a$};
\node(x22)at (-.6,-.7){$Ue$};
\node(x22)at (.3,.3){$e$};
\node(x21)at (-.3,-.7){$c$};

\node(y1)at (-1.8,3.2){$U^{-1}z_v^1$};
\node(y4)at (3.2,-1.8){$U^{-1}z_v^2$};

\path[->](-2,2.9)edge(-2,2.1);
\path[->](2.9,-2)edge(2.1,-2);
\path[->](-.1,2)edge(-1.9,2);
\path[->](2,-.1)edge(2,-1.9);
\path[->](0,1.9)edge(0,1.1);
\path[->](1.9,0)edge(1.1,0);
\path[->](.9,1)edge(.1,1);
\path[->](1,.9)edge(1,.1);
\path[->](.75,.85)edge(.25,.85);
\path[->](.75,.15)edge(.25,.15);
\path[->](.85,.75)edge(.85,.25);
\path[->](.15,.75)edge(.15,.25);


\fill(2.0,-3.0)circle [radius=2pt];
\fill(1.0,-3.0)circle [radius=2pt];
\fill(-1.0,1.0)circle [radius=2pt];
\fill(-1.0,0.0)circle [radius=2pt];
\fill(0.0,0.0)circle [radius=2pt];
\fill(0.0,-1.0)circle [radius=2pt];
\fill(1.0,-1.0)circle [radius=2pt];
\fill(-3.0,1.0)circle [radius=2pt];
\fill(-3.0,2.0)circle [radius=2pt];

\fill(-0.85,-0.85)circle [radius=2pt];
\fill(-0.85,-0.15)circle [radius=2pt];
\fill(-0.15,-0.85)circle [radius=2pt];
\fill(-0.15,-0.15)circle [radius=2pt];

\path[->](-3.0,1.9)edge(-3.0,1.1);
\path[->](1.9,-3.0)edge(1.1,-3.0);
\path[->](-1.1,1.0)edge(-2.9,1.0);
\path[->](1.0,-1.1)edge(1.0,-2.9);
\path[->](-1.0,0.9)edge(-1.0,0.1);
\path[->](0.9,-1.0)edge(0.1,-1.0);
\path[->](-0.1,0.0)edge(-0.9,0.0);
\path[->](0.0,-0.1)edge(0.0,-0.9);
\path[->](-0.25,-0.15)edge(-0.75,-0.15);
\path[->](-0.25,-0.85)edge(-0.75,-0.85);
\path[->](-0.15,-0.25)edge(-0.15,-0.75);
\path[->](-0.85,-0.25)edge(-0.85,-0.75);


\fill(4.0,-1.0)circle [radius=2pt];
\fill(3.0,-1.0)circle [radius=2pt];
\fill(1.0,3.0)circle [radius=2pt];
\fill(1.0,2.0)circle [radius=2pt];
\fill(2.0,2.0)circle [radius=2pt];
\fill(2.0,1.0)circle [radius=2pt];
\fill(3.0,1.0)circle [radius=2pt];
\fill(-1.0,3.0)circle [radius=2pt];
\fill(-1.0,4.0)circle [radius=2pt];

\fill(1.15,1.15)circle [radius=2pt];
\fill(1.15,1.85)circle [radius=2pt];
\fill(1.85,1.15)circle [radius=2pt];
\fill(1.85,1.85)circle [radius=2pt];

\path[->](-1.0,3.9)edge(-1.0,3.1);
\path[->](3.9,-1.0)edge(3.1,-1.0);
\path[->](0.9,3.0)edge(-0.9,3.0);
\path[->](3.0,0.9)edge(3.0,-0.9);
\path[->](1.0,2.9)edge(1.0,2.1);
\path[->](2.9,1.0)edge(2.1,1.0);
\path[->](1.9,2.0)edge(1.1,2.0);
\path[->](2.0,1.9)edge(2.0,1.1);
\path[->](1.75,1.85)edge(1.25,1.85);
\path[->](1.75,1.15)edge(1.25,1.15);
\path[->](1.85,1.75)edge(1.85,1.25);
\path[->](1.15,1.75)edge(1.15,1.25);
\end{tikzpicture}
}
\end{center}
\caption{The model complex $C_1$ tensored with $\mathbb F[U,U^{-1}]$, corresponding to knots $P(-2,m,n)$ such that $m\equiv n\equiv 1\pmod 4$. The case of $m=n=5$ is shown.}
\label{fig:C1}
\end{figure}
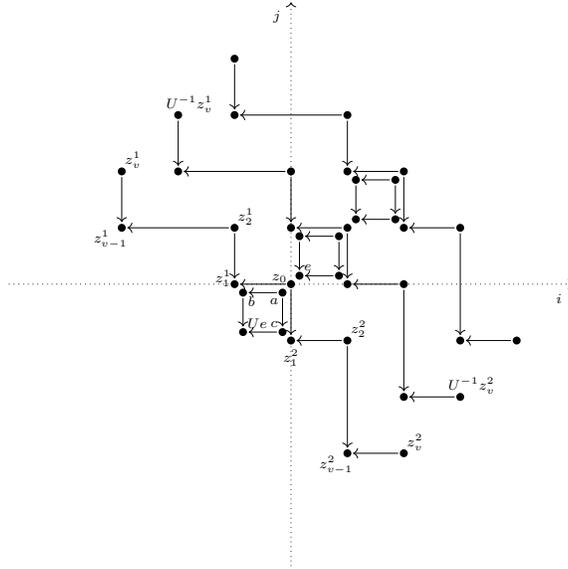

\begin{figure}[htb]
\begin{center}
\scalebox{.75}{
\begin{tikzpicture}\tikzstyle{every node}=[font=\tiny] 
\path[->][dotted](0,-5)edge(0,5);
\path[->][dotted](-5,0)edge(5,0);

\node at (-.25,4.75){$j$};
\node at (4.75,-.25){$i$};

\fill(-3,3)circle [radius=2pt];
\fill(-3,2)circle [radius=2pt];
\fill(-1,2)circle [radius=2pt];
\fill(-1,1)circle [radius=2pt];
\fill(0,1)circle [radius=2pt];
\fill(0,0)circle [radius=2pt];
\fill(1,0)circle [radius=2pt];
\fill(1,-1)circle [radius=2pt];
\fill(2,-1)circle [radius=2pt];
\fill(2,-3)circle [radius=2pt];
\fill(3,-3)circle [radius=2pt];

\node(y1)at (-2.8,3.2){$z_v^1$};
\node(y1)at (-2.6,2.2){$z_{v-1}^1$};
\node(y1)at (-0.8,2.2){$z_3^1$};
\node(y1)at (-0.8,1.2){$z_2^1$};
\node(y1)at (0.2,1.2){$z_1^1$};
\node(y1)at (-.2,.2){$z_0$};
\node(y1)at (1.2,0.2){$z_1^2$};
\node(y1)at (1.2,-0.8){$z_2^2$};
\node(y1)at (2.2,-0.8){$z_3^2$};
\node(y1)at (2.4,-2.8){$z_{v-1}^2$};
\node(y1)at (3.2,-2.8){$z_v^2$};

\path[->](-3,2.9)edge(-3,2.1);
\path[->](-1.1,2)edge(-2.9,2);
\path[->](-1,1.9)edge(-1,1.1);
\path[->](-.1,1)edge(-.9,1);
\path[->](0,.9)edge(0,.1);
\path[->](.9,0)edge(.1,0);
\path[->](1,-.1)edge(1,-.9);
\path[->](1.9,-1)edge(1.1,-1);
\path[->](2,-1.1)edge(2,-2.9);
\path[->](2.9,-3)edge(2.1,-3);


\fill(-2,4)circle [radius=2pt];
\fill(-2,3)circle [radius=2pt];
\fill(0,3)circle [radius=2pt];
\fill(0,2)circle [radius=2pt];
\fill(1,2)circle [radius=2pt];
\fill(1,1)circle [radius=2pt];
\fill(2,1)circle [radius=2pt];
\fill(2,0)circle [radius=2pt];
\fill(3,0)circle [radius=2pt];
\fill(3,-2)circle [radius=2pt];
\fill(4,-2)circle [radius=2pt];

\path[->](-2,3.9)edge(-2,3.1);
\path[->](-0.1,3)edge(-1.9,3);
\path[->](0,2.9)edge(0,2.1);
\path[->](0.9,2)edge(0.1,2);
\path[->](1,1.9)edge(1,1.1);
\path[->](1.9,1)edge(1.1,1);
\path[->](2,0.9)edge(2,0.1);
\path[->](2.9,0)edge(2.1,0);
\path[->](3,-0.1)edge(3,-1.9);
\path[->](3.9,-2)edge(3.1,-2);


\fill(-4,2)circle [radius=2pt];
\fill(-4,1)circle [radius=2pt];
\fill(-2,1)circle [radius=2pt];
\fill(-2,0)circle [radius=2pt];
\fill(-1,0)circle [radius=2pt];
\fill(-1,-1)circle [radius=2pt];
\fill(0,-1)circle [radius=2pt];
\fill(0,-2)circle [radius=2pt];
\fill(1,-2)circle [radius=2pt];
\fill(1,-4)circle [radius=2pt];
\fill(2,-4)circle [radius=2pt];

\node(y1)at (-3.8,2.2){$Uz_v^1$};
\node(y1)at (-1.8,1.2){$Uz_3^1$};
\node(y1)at (-0.8,0.2){$Uz_1^1$};
\node(y1)at (0.3,-0.8){$Uz_1^2$};
\node(y1)at (1.2,-1.8){$Uz_3^2$};
\node(y1)at (2.2,-3.8){$Uz_v^2$};

\path[->](-4,1.9)edge(-4,1.1);
\path[->](-2.1,1)edge(-3.9,1);
\path[->](-2,0.9)edge(-2,0.1);
\path[->](-1.1,0)edge(-1.9,0);
\path[->](-1,-0.1)edge(-1,-0.9);
\path[->](-0.1,-1)edge(-0.9,-1);
\path[->](0,-1.1)edge(0,-1.9);
\path[->](0.9,-2)edge(0.1,-2);
\path[->](1,-2.1)edge(1,-3.9);
\path[->](1.9,-4)edge(1.1,-4);

\end{tikzpicture}
}
\end{center}
\caption{The model complex $C_2$ tensored with $\mathbb F[U,U^{-1}]$, corresponding to knots $P(-2,m,n)$ such that $m\equiv 3\pmod 4$ and $n\equiv 1\pmod 4$. The case of $m=7$ and $n=5$ is shown.}
 \label{fig:C2}
\end{figure}

\begin{figure}[htb] 
\begin{center}
\scalebox{.75}{
\begin{tikzpicture}\tikzstyle{every node}=[font=\tiny] 
\path[->][dotted](0,-5)edge(0,5);
\path[->][dotted](-5,0)edge(5,0);

\node at (-.25,4.75){$j$};
\node at (4.75,-.25){$i$};

\fill(-3,4)circle [radius=2pt];
\fill(-3,3)circle [radius=2pt];
\fill(-1,3)circle [radius=2pt];
\fill(-1,2)circle [radius=2pt];
\fill(0,2)circle [radius=2pt];
\fill(0,1)circle [radius=2pt];
\fill(1,1)circle [radius=2pt];
\fill(1,0)circle [radius=2pt];
\fill(2,0)circle [radius=2pt];
\fill(2,-1)circle [radius=2pt];
\fill(3,-1)circle [radius=2pt];
\fill(3,-3)circle [radius=2pt];
\fill(4,-3)circle [radius=2pt];

\node(y1)at (-2.8,4.2){$U^{-1}z_v^1$};
\node(y1)at (-0.8,3.2){$U^{-1}z_4^1$};
\node(y1)at (1.2,1.2){$U^{-1}z_0$};
\node(y1)at (3.2,-0.8){$U^{-1}z_4^2$};
\node(y1)at (4.2,-2.8){$U^{-1}z_v^2$};

\path[->](-3,3.9)edge(-3,3.1);
\path[->](-1.1,3)edge(-2.9,3);
\path[->](-1,2.9)edge(-1,2.1);
\path[->](-.1,2)edge(-.9,2);
\path[->](0,1.9)edge(0,1.1);
\path[->](.9,1)edge(.1,1);
\path[->](1,.9)edge(1,0.1);
\path[->](1.9,0)edge(1.1,0);
\path[->](2,-.1)edge(2,-.9);
\path[->](2.9,-1)edge(2.1,-1);
\path[->](3,-1.1)edge(3,-2.9);
\path[->](3.9,-3)edge(3.1,-3);


\fill(-4,3)circle [radius=2pt];
\fill(-4,2)circle [radius=2pt];
\fill(-2,2)circle [radius=2pt];
\fill(-2,1)circle [radius=2pt];
\fill(-1,1)circle [radius=2pt];
\fill(-1,0)circle [radius=2pt];
\fill(0,0)circle [radius=2pt];
\fill(0,-1)circle [radius=2pt];
\fill(1,-1)circle [radius=2pt];
\fill(1,-2)circle [radius=2pt];
\fill(2,-2)circle [radius=2pt];
\fill(2,-4)circle [radius=2pt];
\fill(3,-4)circle [radius=2pt];

\node(y1)at (-3.8,3.2){$z_v^1$};
\node(y1)at (-3.6,2.2){$z_{v-1}^1$};
\node(y1)at (-1.8,2.2){$z_4^1$};
\node(y1)at (-1.8,1.2){$z_3^1$};
\node(y1)at (-0.8,1.2){$z_2^1$};
\node(y1)at (-0.8,0.2){$z_1^1$};
\node(y1)at (0.2,0.2){$z_0$};
\node(y1)at (0.2,-0.8){$z_1^2$};
\node(y1)at (1.2,-0.8){$z_2^2$};
\node(y1)at (1.2,-1.8){$z_3^2$};
\node(y1)at (2.2,-1.8){$z_4^2$};
\node(y1)at (2.4,-3.8){$z_{v-1}^2$};
\node(y1)at (3.2,-3.8){$z_v^2$};

\path[->](-4,2.9)edge(-4,2.1);
\path[->](-2.1,2)edge(-3.9,2);
\path[->](-2,1.9)edge(-2,1.1);
\path[->](-1.1,1)edge(-1.9,1);
\path[->](-1,0.9)edge(-1,0.1);
\path[->](-0.1,0)edge(-0.9,0);
\path[->](0,-0.1)edge(0,-0.9);
\path[->](0.9,-1)edge(0.1,-1);
\path[->](1,-1.1)edge(1,-1.9);
\path[->](1.9,-2)edge(1.1,-2);
\path[->](2,-2.1)edge(2,-3.9);
\path[->](2.9,-4)edge(2.1,-4);


\fill(-2,5)circle [radius=2pt];
\fill(-2,4)circle [radius=2pt];
\fill(0,4)circle [radius=2pt];
\fill(0,3)circle [radius=2pt];
\fill(1,3)circle [radius=2pt];
\fill(1,2)circle [radius=2pt];
\fill(2,2)circle [radius=2pt];
\fill(2,1)circle [radius=2pt];
\fill(3,1)circle [radius=2pt];
\fill(3,0)circle [radius=2pt];
\fill(4,0)circle [radius=2pt];
\fill(4,-2)circle [radius=2pt];
\fill(5,-2)circle [radius=2pt];

\path[->](-2,4.9)edge(-2,4.1);
\path[->](-0.1,4)edge(-1.9,4);
\path[->](0,3.9)edge(0,3.1);
\path[->](0.9,3)edge(0.1,3);
\path[->](1,2.9)edge(1,2.1);
\path[->](1.9,2)edge(1.1,2);
\path[->](2,1.9)edge(2,1.1);
\path[->](2.9,1)edge(2.1,1);
\path[->](3,0.9)edge(3,0.1);
\path[->](3.9,0)edge(3.1,0);
\path[->](4,-0.1)edge(4,-1.9);
\path[->](4.9,-2)edge(4.1,-2);
\end{tikzpicture}
}
\end{center}
\caption{The model complex $C_3$ tensored with $\mathbb F[U,U^{-1}]$, corresponding to knots $P(-2,m,n)$ such that $m\equiv n\equiv 3\pmod 4$. The case of $m=n=7$ is shown.}
\label{fig:C3}
\end{figure}

\begin{figure}[htb] 
\begin{center}
\scalebox{.75}{
\begin{tikzpicture}\tikzstyle{every node}=[font=\tiny] 
\path[->][dotted](0,-5)edge(0,5);
\path[->][dotted](-5,0)edge(5,0);

\node at (-.25,4.75){$j$};
\node at (4.75,-.25){$i$};

\fill(-4,4)circle [radius=2pt];
\fill(-4,3)circle [radius=2pt];
\fill(-2,3)circle [radius=2pt];
\fill(-2,2)circle [radius=2pt];
\fill(-1,2)circle [radius=2pt];
\fill(-1,1)circle [radius=2pt];
\fill(0,1)circle [radius=2pt];
\fill(0,0)circle [radius=2pt];
\fill(1,0)circle [radius=2pt];
\fill(1,-1)circle [radius=2pt];
\fill(2,-1)circle [radius=2pt];
\fill(2,-2)circle [radius=2pt];
\fill(3,-2)circle [radius=2pt];
\fill(3,-4)circle [radius=2pt];
\fill(4,-4)circle [radius=2pt];

\node(y1)at (-3.8,4.2){$z_v^1$};
\node(y1)at (-3.6,3.2){$z_{v-1}^1$};
\node(y1)at (-1.8,3.2){$z_5^1$};
\node(y1)at (-1.8,2.2){$z_4^1$};
\node(y1)at (-0.8,2.2){$z_3^1$};
\node(y1)at (-0.8,1.2){$z_2^1$};
\node(y1)at (0.2,1.2){$z_1^1$};
\node(y1)at (0.2,0.2){$z_0$};
\node(y1)at (1.2,0.2){$z_1^2$};
\node(y1)at (1.2,-0.8){$z_2^2$};
\node(y1)at (2.2,-0.8){$z_3^2$};
\node(y1)at (2.2,-1.8){$z_4^2$};
\node(y1)at (3.2,-1.8){$z_5^2$};
\node(y1)at (3.4,-3.8){$z_{v-1}^2$};
\node(y1)at (4.2,-3.8){$z_v^2$};

\path[->](-4,3.9)edge(-4,3.1);
\path[->](-2.1,3)edge(-3.9,3);
\path[->](-2,2.9)edge(-2,2.1);
\path[->](-1.1,2)edge(-1.9,2);
\path[->](-1,1.9)edge(-1,1.1);
\path[->](-.1,1)edge(-.9,1);
\path[->](0,.9)edge(0,.1);
\path[->](.9,0)edge(.1,0);
\path[->](1,-.1)edge(1,-.9);
\path[->](1.9,-1)edge(1.1,-1);
\path[->](2,-1.1)edge(2,-1.9);
\path[->](2.9,-2)edge(2.1,-2);
\path[->](3,-2.1)edge(3,-3.9);
\path[->](3.9,-4)edge(3.1,-4);


\fill(-3,5)circle [radius=2pt];
\fill(-3,4)circle [radius=2pt];
\fill(-1,4)circle [radius=2pt];
\fill(-1,3)circle [radius=2pt];
\fill(0,3)circle [radius=2pt];
\fill(0,2)circle [radius=2pt];
\fill(1,2)circle [radius=2pt];
\fill(1,1)circle [radius=2pt];
\fill(2,1)circle [radius=2pt];
\fill(2,0)circle [radius=2pt];
\fill(3,0)circle [radius=2pt];
\fill(3,-1)circle [radius=2pt];
\fill(4,-1)circle [radius=2pt];
\fill(4,-3)circle [radius=2pt];
\fill(5,-3)circle [radius=2pt];

\path[->](-3,4.9)edge(-3,4.1);
\path[->](-1.1,4)edge(-2.9,4);
\path[->](-1,3.9)edge(-1,3.1);
\path[->](-0.1,3)edge(-0.9,3);
\path[->](0,2.9)edge(0,2.1);
\path[->](0.9,2)edge(0.1,2);
\path[->](1,1.9)edge(1,1.1);
\path[->](1.9,1)edge(1.1,1);
\path[->](2,0.9)edge(2,0.1);
\path[->](2.9,0)edge(2.1,0);
\path[->](3,-0.1)edge(3,-0.9);
\path[->](3.9,-1)edge(3.1,-1);
\path[->](4,-1.1)edge(4,-2.9);
\path[->](4.9,-3)edge(4.1,-3);


\fill(-5,3)circle [radius=2pt];
\fill(-5,2)circle [radius=2pt];
\fill(-3,2)circle [radius=2pt];
\fill(-3,1)circle [radius=2pt];
\fill(-2,1)circle [radius=2pt];
\fill(-2,0)circle [radius=2pt];
\fill(-1,0)circle [radius=2pt];
\fill(-1,-1)circle [radius=2pt];
\fill(0,-1)circle [radius=2pt];
\fill(0,-2)circle [radius=2pt];
\fill(1,-2)circle [radius=2pt];
\fill(1,-3)circle [radius=2pt];
\fill(2,-3)circle [radius=2pt];
\fill(2,-5)circle [radius=2pt];
\fill(3,-5)circle [radius=2pt];

\node(y1)at (-4.8,3.2){$Uz_v^1$};
\node(y1)at (-2.8,2.2){$Uz_5^1$};
\node(y1)at (-1.8,1.2){$Uz_3^1$};
\node(y1)at (-0.8,0.2){$Uz_1^1$};
\node(y1)at (0.2,-0.8){$Uz_1^2$};
\node(y1)at (1.2,-1.8){$Uz_3^2$};
\node(y1)at (2.2,-2.8){$Uz_5^2$};
\node(y1)at (3.2,-4.8){$Uz_v^2$};

\path[->](-5,2.9)edge(-5,2.1);
\path[->](-3.1,2)edge(-4.9,2);
\path[->](-3,1.9)edge(-3,1.1);
\path[->](-2.1,1)edge(-2.9,1);
\path[->](-2,0.9)edge(-2,0.1);
\path[->](-1.1,0)edge(-1.9,0);
\path[->](-1,-0.1)edge(-1,-0.9);
\path[->](-0.1,-1)edge(-0.9,-1);
\path[->](0,-1.1)edge(0,-1.9);
\path[->](0.9,-2)edge(0.1,-2);
\path[->](1,-2.1)edge(1,-2.9);
\path[->](1.9,-3)edge(1.1,-3);
\path[->](2,-3.1)edge(2,-4.9);
\path[->](2.9,-5)edge(2.1,-5);

\end{tikzpicture}
}
\end{center}
\caption{The model complex $C_4$ tensored with $\mathbb F[U,U^{-1}]$, corresponding to knots $P(-2,m,n)$ such that $m\equiv 1\pmod 4$ and $n\equiv 3\pmod 4$. The case of $m=9$ and $n=7$ is shown.}
\label{fig:C4}
\end{figure}

We now conclude with the proof of our main theorem. 

\begin{proof}[Proof of Theorem \ref{thm:main}] We present the argument for the model complex $C_1$ and its dual; the other cases, being staircase complexes, are substantially easier. Our computations are similar to Cases 2(e) and 2(c) of \cite[Proposition 8.2]{HM:involutive} respectively. The expert reader may find it helpful to note that, because of the step of length two in our staircase complex, we in particular get parallel results to the case of thin knots in the case that the Oszv{\'a}th-Szab{\'o} concordance invariant $\tau$ is an even number, even though in our case $\tau=g = \frac{m+n}{2}$ is odd \cite[Section 3]{GMM}. The complex is pictured in Figure ~\ref{fig:C1}, and has generators $a, b, c, e, z_0, z_r^s$ for $s \in \{1,2\}$ and $1 \leq r \leq v$, where $v=\frac{m+n}{2}-1$. The homological gradings are determined by the requirement that $H_*(C\{i=0\})= \mathbb F_{(0)}$, which fixes $\gr(U^{n(K)}z_v^2)=0.$ The complex has nonzero differentials
  \begin{alignat*}{3}
        &\partial a = b+c  & \qquad\qquad & \qquad\qquad &      &\partial b = \partial c = Ue \\
        &\partial(z_0)=z_1^1+z_1^2  & & &      &\partial(z^s_v) = z^s_{v-1}\\
        &\partial(z_r^s) = z_{r-1}^s + z_{r+1}^s, \ 0<r<v, r \text{ even } &&&& \
\end{alignat*}  

\noindent Up to change of basis, the unique skew-filtered map on $C_1$ squaring to the Sarkar involution is
  \begin{alignat*}{3}
        &\iota_K(a)=a+z_0  & \qquad\qquad & \qquad\qquad &      &\iota_K(b)=c+z_1^2 \\
        &\iota_K(z_0)=z_0+e  & & &      &\iota_K(c)=b+z_1^1  \\
        & \iota_K(z_r^1)=z_r^2, 1<r\leq v & & & & \iota_K(z_r^2)=z_r^1, 1 \leq r \leq v \\
        &\iota_K(e)=e & & & & \ 
    \end{alignat*}

We begin by considering the $\mathbb F[U]$-complex of $C_1\otimes \F[U, U^{-1}]$ with $\mathbb F$-basis consisting of elements lying in the third quadrant; via a small abuse of notation as in \cite[Section 8]{HM:involutive}, we refer to this subcomplex as $A_0^-$. We see that this complex is generated over $\F[U]$ by the set 
\[\{e, a, b, c, z_0, z^s_1, Uz^s_2, Uz^s_3, U^2z^s_4, \dots, U^{n(K)}z^s_{v}\}\] 
\noindent and differentials inherited from the differentials on $C_1$. The homology $H_*(A_0^-)$ is isomorphic to
\[H_*(A_0^-)\cong\F_{(0)}[U]\oplus \F_{(2n(K)-1)}[U]/U^{n(K)}   \oplus \F_{(2n(K))}\]
\noindent where the summand $\F_{(0)}[U]$ is generated by $[U^{n(K)}(z^1_v + \dots + z_2^1 + z_0 + z_2^2 + \dots z^2_v)]$, the summand $\F_{(2n(K)-1)}[U]/U^{n(K)}$ is generated by $[z_1^1]$ and the summand $\F_{(2n(K))}$ is generated as an $\F$-vector space by $[e]$. Here, the non-$U$-torsion part of $H_*(A_0^-)$ is generated by an element of grading $0$, so $V_0(K)=0$.

\begin{figure}[htb] 
\begin{center}
\scalebox{.75}{
\begin{tikzpicture}\tikzstyle{every node}=[font=\tiny] 
\path[->][dotted](2,-5)edge(2,5);
\path[->][dotted](-5,2)edge(5,2);

\node at (1.75,4.75){$j$};
\node at (4.75,1.75){$i$};

\fill(2,-2)circle [radius=2pt];
\fill(0,2)circle [radius=2pt];
\fill(0,1)circle [radius=2pt];
\fill(1,1)circle [radius=2pt];
\fill(1,0)circle [radius=2pt];
\fill(2,0)circle [radius=2pt];
\fill(-2,2)circle [radius=2pt];

\fill(.15,.15)circle [radius=2pt];
\fill(.15,.85)circle [radius=2pt];
\fill(.85,.15)circle [radius=2pt];
\fill(.85,.85)circle [radius=2pt];

\node(x1331)at (1.8,2.1){$z_0$};
\node(x1223)at (.8,2.1){$z_1^1$};
\node(x3221)at (2,.7){$z_1^2$};
\node(x12)at (1.3,1.7){$b$};
\node(x13)at (1.7,1.7){$a$};
\node(x22)at (1.4,1.3){$Ue$};
\node(x22)at (2.3,2.3){$e$};
\node(x21)at (1.7,1.3){$c$};

\path[->](-.1,2)edge(-1.9,2);
\path[->](2,-.1)edge(2,-1.9);
\path[->](0,1.9)edge(0,1.1);
\path[->](1.9,0)edge(1.1,0);
\path[->](.9,1)edge(.1,1);
\path[->](1,.9)edge(1,.1);
\path[->](.75,.85)edge(.25,.85);
\path[->](.75,.15)edge(.25,.15);
\path[->](.85,.75)edge(.85,.25);
\path[->](.15,.75)edge(.15,.25);


\fill(2.0,-3.0)circle [radius=2pt];
\fill(1.0,-3.0)circle [radius=2pt];
\fill(-1.0,1.0)circle [radius=2pt];
\fill(-1.0,0.0)circle [radius=2pt];
\fill(0.0,0.0)circle [radius=2pt];
\fill(0.0,-1.0)circle [radius=2pt];
\fill(1.0,-1.0)circle [radius=2pt];
\fill(-3.0,1.0)circle [radius=2pt];
\fill(-3.0,2.0)circle [radius=2pt];

\fill(-0.85,-0.85)circle [radius=2pt];
\fill(-0.85,-0.15)circle [radius=2pt];
\fill(-0.15,-0.85)circle [radius=2pt];
\fill(-0.15,-0.15)circle [radius=2pt];

\node(uy2)at (-1.7,2.2){$Uz_{v-1}^1$};
\node(uy3)at (2.5,-1.8){$Uz_{v-1}^2$};

\node(uy2)at (-3.1,2.2){$U^{n(K)}z_v^1$};
\node(uy2)at (2.4,-2.8){$U^{n(K)}z_v^2$};

\path[->](-3.0,1.9)edge(-3.0,1.1);
\path[->](1.9,-3.0)edge(1.1,-3.0);
\path[->](-1.1,1.0)edge(-2.9,1.0);
\path[->](1.0,-1.1)edge(1.0,-2.9);
\path[->](-1.0,0.9)edge(-1.0,0.1);
\path[->](0.9,-1.0)edge(0.1,-1.0);
\path[->](-0.1,0.0)edge(-0.9,0.0);
\path[->](0.0,-0.1)edge(0.0,-0.9);
\path[->](-0.25,-0.15)edge(-0.75,-0.15);
\path[->](-0.25,-0.85)edge(-0.75,-0.85);
\path[->](-0.15,-0.25)edge(-0.15,-0.75);
\path[->](-0.85,-0.25)edge(-0.85,-0.75);


\fill(1.0,2.0)circle [radius=2pt];
\fill(2.0,2.0)circle [radius=2pt];
\fill(2.0,1.0)circle [radius=2pt];

\fill(1.15,1.15)circle [radius=2pt];
\fill(1.15,1.85)circle [radius=2pt];
\fill(1.85,1.15)circle [radius=2pt];
\fill(1.85,1.85)circle [radius=2pt];

\path[->](1.9,2.0)edge(1.1,2.0);
\path[->](2.0,1.9)edge(2.0,1.1);
\path[->](1.75,1.85)edge(1.25,1.85);
\path[->](1.75,1.15)edge(1.25,1.15);
\path[->](1.85,1.75)edge(1.85,1.25);
\path[->](1.15,1.75)edge(1.15,1.25);

\fill(2.15,2.15)circle [radius=2pt];

\node(ellipsis1)at(-3,-1){$\iddots$};
\node(ellipsis2)at(-2,-2){$\iddots$};
\node(ellipsis3)at(-1,-3){$\iddots$};
\end{tikzpicture}
}
\end{center}
\caption{The $\F[U]$-submodule $A_0^-$ of the complex $C_1\otimes \mathbb F[U,U^{-1}]$. The specific example shown remains $m=n=5$; in this case $n(K)=2$.}
\label{fig:Aminus}
\end{figure}
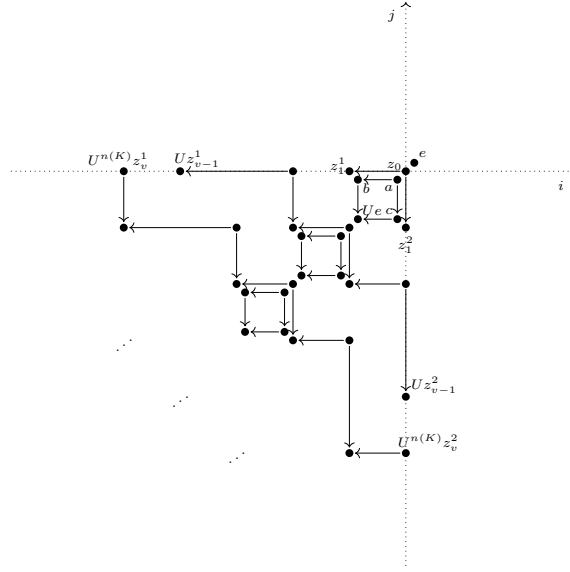

We now compute the involutive correction terms. We start by computing the homology of the grading-shifted mapping cone \[CI^{\infty} = \mathrm{Cone}(C_1\otimes \F[U,U^{-1}] \xrightarrow{Q(1+\iota_K)} Q(C_1\otimes \F[U,U^{-1}])[-1]).\] 
This mapping cone comes to us with a set of generators \[\{e, a, b, c, z_0, z^s_1,\dots, z^s_{v}, Qe, Qa, Qb, Qc, Qz_0, Qz^s_1, \dots, Qz^s_{v}\}\] for $s=1,2$ as usual. We do a change of basis to replace this by the set of generators 
\begin{align*}&\{e, a, b, b+c, z_0, z^1_1, z_1^1+z_1^2 ,\dots, z_1^v, z^1_{v}+z^2_v, \\ & \qquad Qe, Qa, Qb, Q(b+c), Qz_0, Qz^1_1, Qz_1^1+Qz_1^2, \dots, Qz^1_{v}, Q(z^1_v+z_v^2)\}.\end{align*}

\noindent After this change of basis the complex breaks up as a direct sum of the tensor products of three model $\mathbb F$-complexes $D_1$, $D_2$, and $D_3$ shown in Figure~\ref{fig:ConeComplexes} with $\mathbb F[U,U^{-1}]$. The top model complex $D_1$ has generators
\[ \{QU^{-1}c, Qe, z_0, z^1_1+z^2_1, \dots, z_v^1+z_v^2\}\]
and nonzero differentials $\partial^{\iota} = \partial + Q(1+\iota_K)$ given by
    \begin{alignat*}{3}
        &\partial^{\iota}(QU^{-1}c)= Qe \\
        &\partial^{\iota}(z_0)= (z_1^1+z_1^2)+Qe \\
        &\partial^{\iota}(z_v^1+z_v^2) = z_{v-1}^1 + z_{v-2}^2 \\
        &\partial^{\iota}(z_r^1+z_r^2)=(z_{r-1}^1+z_{r-1}^2)+(z_{r+1}^1+z_{r+1}^2), \ \ 0<r<v, \ r\text{ even.} 
    \end{alignat*}
We observe that tensor product of $D_1$ with $\mathbb F[U,U^{-1}]$ contributes a summand $\mathbb F[U,U^{-1}]$ to $H_*(CI^{\infty})$ generated by $[\theta] = [z_0+(z_2^1+z_2^2)+\cdots+(z_v^1+z_v^2)+QU^{-1}c]$. Our next model complex $D_2$, shown on the middle row of Figure~\ref{fig:ConeComplexes}, has generators
\[\{a, b+c, Qz_0, z_1^1, \dots, z_v^1, Q(z_1^1+z_1^2), \dots, Q(z_v^1+z_v^2)\}\]
and nonzero differentials
    \begin{alignat*}{3}
        &\partial^{\iota}(a) = (b+c)+Qz_0   \\
        &\partial^{\iota}(b+c)= Q(z_1^1+z_1^2)  \\
        &\partial^{\iota}(Qz_0)= Q(z_1^1+z_1^2) \\
        &\partial^{\iota}(z_v^1) = z_{v-1}^1+Q(z_v^1+z_v^2) \\
	    & \partial^{\iota}(Q(z_v^1+z_v^2)) = Q(z_{v-1}^1 + z_{v-1}^2)\\
        &\partial^{\iota}(z_r^1)=z_{r-1}^1+z_{r+1}^1+Q(z_r^1+z_r^2), \ \ 0<r< v,\ r\text{ even}\\
        &\partial^{\iota}(z_r^1)=Q(z_r^1+z_r^2), \ \ 0<r< v,\ r\text{ odd}\\
         &\partial^{\iota}(Q(z_r^1+z_r^2))=Q(z_{r-1}^1+z_{r-1}^2)+Q(z_{r+1}^1+z_{r+1}^2), \ \ 0<r<v, \ r\text{ even}
    \end{alignat*}
We observe that the tensor product of $D_2$ with $\mathbb F[U,U^{-1}]$ contributes an $\mathbb F[U,U^{-1}]$ summand to the homology $H_*(CI^{\infty})$ generated by $[Qz_0+z_1^1]=[Q(z_0+z_2^1+z_2^2+\cdots+z_v^1+z_v^2)]=Q[\theta]$. The final model complex $D_3$ has generators
\[\{Qa, Q(b+c), c, e, Qz_1^1, \dots, Qz_v^1\}\]
and nonzero differentials
    \begin{alignat*}{3}
        &\partial^{\iota}(Qa)= Q(b+c)  \\
        &\partial^{\iota}(c) = Ue+Q(b+c)+Qz_1^1 \\
        &\partial^{\iota}(Qz_{v}^1)=Qz_{v-1}^1 \\
         &\partial^{\iota}(Qz_r^1)=Qz_{r-1}^1+Qz_{r+1}^1, \ \ 0<r< v, \ r\text{ even}
    \end{alignat*}
\begin{figure}[htb] 
\begin{center}
\begin{tikzpicture}\tikzstyle{every node}=[font=\tiny]

\node(Qb)at(-6.0,0.0){$QU^{-1}c$};
\node(QUe)at(-4.5,-1.5){$Qe$};
\node(z0)at(-3.0,0.0){$z_0$};
\node(zs)at(-1.5,-1.5){$z_1^1+z_1^2$};
\node(zs)at(0.0,0.0){$z_2^1+z_2^2$};
\node(zs)at(1.5,-1.5){$z_3^1+z_3^2$};
\node(dots)at(3.0,0.0){$\dots$};
\node(zs)at(4.5,-1.5){$z_{v-1}^1+z_{v-1}^2$};
\node(zs)at(6.0,0.0){$z_v^1+z_v^2$};

\path[->](-5.7,-0.3)edge(-4.8,-1.2);
\path[->](-3.3,-0.3)edge(-4.2,-1.2);
\path[->](-2.7,-0.3)edge(-1.8,-1.2);
\path[->](-0.3,-0.3)edge(-1.2,-1.2);
\path[->](0.3,-0.3)edge(1.2,-1.2);
\path[->](2.55,-0.45)edge(1.8,-1.2);
\path[->](3.45,-0.45)edge(4.2,-1.2);
\path[->](5.7,-0.3)edge(4.8,-1.2);
\end{tikzpicture}
\end{center}
\vspace{5mm}
\begin{center}
\begin{tikzpicture}\tikzstyle{every node}=[font=\tiny]

\node(nd)at(-4.5,1.5){$a$};
\node(nd)at(-5,0.375){$b+c$};
\node(nd)at(-3.75,0.375){$Qz_0$};
\node(nd)at(-3.0,0.375){$z_1^1$};
\node(nd)at(-3.0,-0.8){$Q(z_1^1+z_1^2)$};
\node(nd)at(-1.5,1.5){$z_2^1$};
\node(nd)at(-1.5,0.375){$Q(z_2^1+z_2^2)$};
\node(nd)at(0.0,0.375){$z_3^1$};
\node(nd)at(0.0,-0.8){$Q(z_3^1+z_3^2)$};
\node(nd)at(1.5,1.5){$z_4^1$};
\node(nd)at(1.5,0.375){$Q(z_4^1+z_4^2)$};
\node(nd)at(3.0,0.375){$\dots$};
\node(nd)at(4.5,0.375){$z_{v-1}^1$};
\node(nd)at(3.0,-0.8){$\dots$};
\node(nd)at(4.2,-0.8){$Q(z_{v-1}^1+z_{v-1}^2)$};
\node(nd)at(6.0,1.5){$z_v^1$};
\node(nd)at(5.7,0.375){$Q(z_v^1+z_v^2)$};

\path[->](-4.6,1.3)edge(-4.9,0.575);
\path[->](-3.0,0.2)edge(-3.0,-0.45);
\path[->](-1.5,1.2)edge(-1.5,0.55);
\path[->](0.0,0.2)edge(0.0,-0.45);
\path[->](1.5,1.2)edge(1.5,0.55);
\path[->](4.5,0.2)edge(4.5,-0.45);
\path[->](6.0,1.2)edge(6.0,0.55);
\path[->](-4.8,.2)edge(-3.3,-0.6);
\path[->](-1.65,0.05)edge(-2.7,-0.6);
\path[->](-1.35,0.05)edge(-0.3,-0.6);
\path[->](1.35,0.05)edge(0.3,-0.6);
\path[->](1.65,0.05)edge(2.7,-0.6);
\path[->](5.85,0.05)edge(4.8,-0.6);
\path[->](-1.65,1.35)edge(-2.7,0.7);
\path[->](-1.35,1.35)edge(-0.3,0.7);
\path[->](1.35,1.35)edge(0.3,0.7);
\path[->](1.65,1.35)edge(2.7,0.7);
\path[->](5.85,1.35)edge(4.8,0.7);
\path[->](-4.35,1.3)edge(-3.9,0.575);
\path[->](-3.6,0.225)edge(-3.15,-0.6);
\end{tikzpicture}
\begin{center}
\vspace{5mm}
\end{center}
\begin{tikzpicture}\tikzstyle{every node}=[font=\tiny]

\node(Qa)at(-4.5,0.0){$Qa$};
\node(Qbc)at(-3.0,-1.5){$Q(b+c)$};
\node(b)at(-1.5,0.0){$c$};
\node(Ue)at(-1.5,-1.5){$Ue$};
\node(Qz11)at(0.0,-1.5){$Qz_1^1$};
\node(Qz21)at(1.5,0.0){$Qz_2^1$};
\node(dots)at(3.0,-1.5){$\dots$};
\node(Qzv1)at(4.5,0.0){$Qz_v^1$};

\path[->](-4.2,-0.3)edge(-3.3,-1.2);
\path[->](-1.8,-0.3)edge(-2.7,-1.2);
\path[->](-1.5,-0.3)edge(-1.5,-1.2);
\path[->](-1.2,-0.3)edge(-0.3,-1.2);
\path[->](1.2,-0.3)edge(0.3,-1.2);
\path[->](1.8,-0.3)edge(2.7,-1.2);
\path[->](4.2,-0.3)edge(3.3,-1.2);
\end{tikzpicture}
\end{center}
\caption{Model complexes for direct summands appearing in the mapping cone $\Cone(C_1\otimes \F[U,U^{-1}] \xrightarrow{Q(1+\iota_K)} Q(C_1 \otimes \F[U,U^{-1}])[-1])$ . These complexes are labeled $D_1$, $D_1$, $D_3$ in order down the page.} \label{fig:summands}
\label{fig:ConeComplexes}
\end{figure}
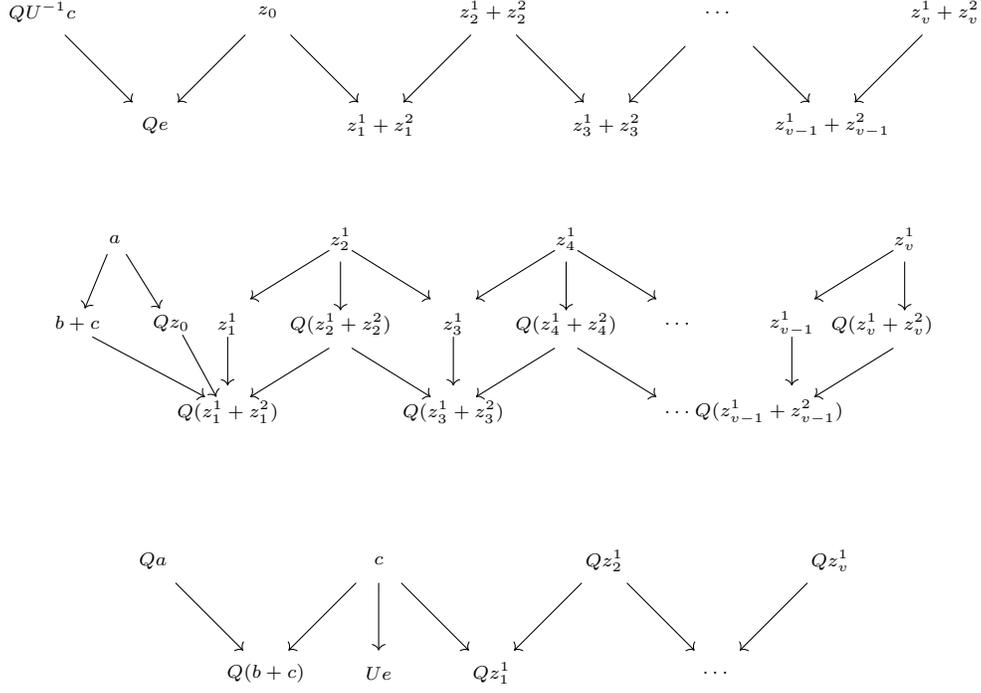

This final model complex is acyclic and its tensor product with $\mathbb F[U,U^{-1}]$ gives no contribution to the homology of the mapping cone. Hence, the homology of the mapping cone is generated as a module over $\F[U,U^{-1}]$ by $[\theta]$ and $[Qz_0+z_1^1]=Q[\theta]$.

Now we compute the homology of the mapping cone $AI_0^-$. Recall that $A_0^-$ is generated by the elements \[\{e, a, b, c, z_0, z^s_1, Uz^s_2, Uz_3^s, U^2z^s_4, \dots, U^{n(K)}z^s_{v}\}\] for $s=1,2$ as usual. Therefore $AI_0^-$ contains the product of $U^{m}$ for $m\geq n(K)$ with all three of the model complexes $D_1$, $D_2$, and $D_3$ appearing in Figure \ref{fig:ConeComplexes}; for lower powers we must consider appropriate truncations of these complexes. 

We start with $D_1$. We see that $\left(U^mD_1\right) \cap AI_0^- = U^mD_1$ for $m \geq n(K)$, and has homology $U^m[\theta]$. To obtain $(U^{n(K)-1}D_1)\cap AI_0^-$, we delete the term $z^1_v+z^2_v$ and multiply all other elements by $U^{n(K)-1}$ after which we have an acyclic complex. To obtain $(U^{n(K)-2}D_2) \cap AI_0^-$, we additionally delete the terms $z^1_{v-1}+z^2_{v-1}$ and $z^1_{v-2}+z^2_{v-2}$, and multiply all other elements by $U^{n(K)-2}$, after which we still have an acyclic complex. This pattern continues until we reach $(UD_1)\cap AI_0^1$, which has generators \[\{Qc, QUe, Uz_0, Uz_1^1+Uz_1^2, Uz_2^1+Uz_2^2, Uz^1_3+Uz_3^2\}\] as an $\mathbb F$-vector space and is acyclic. If we then consider $D_1\cap AI_0^-$, which is generated by $\{Qe, z_0, z_1^1+z_1^2\}$, we see this truncation has homology $[Qe]$. So the model complex $D_1$ contributes a summand $\mathbb F_{(1)}[U]$ generated over $\F[U]$ by $[U^{n(K)}\theta]$ and a summand $\mathbb F_{(2n(K))}$ with $\F$-basis $[Qe]$ to the homology $H_*(AI_0^-)$ considered as an $\F[U]$-module.

We now consider $D_2$. As previously, $(U^mD_2) \cap AI_0^- = U^mD_2$ for $m \geq n(K)$, and has homology $U^m[Qz_0+z_1^1]=Q[\theta]$. One may easily check that this is unchanged by successive truncations; that is, that the homology of $(U^{\ell}D_2) \cap AI_0^-$ for $0\leq \ell \leq n(K)-1$ remains $U^{\ell}[Qz_0+z_1^1]$. The model complex $D_2$ therefore contributes a summand $\mathbb F_{(2n(K))}[U]$ generated over $\F[U]$ by $[Qz_0+z_1^1]$ to the homology $H_*(AI_0^-)$ considered as an $\F[U]$-module. 

Finally, we consider $D_3$. We see that $(U^{m}D_3) \cap AI_0^- = U^mD_3$ is acyclic for all $m \geq n(K)$. However, consider $(U^{n(K)-1}D_3) \cap AI_0^-$, which we may obtain by deleting $Qz_v^1$ and multiplying all the remaining elements by $U^{n(K)-1}$. We see this complex now has homology generated by \[[U^{n(K)}e]=[U^{n(K)-1}Qz_1^1] = \dots = [U^{n(K)-1}Qz_{v-1}^1].\] We see the same behavior under successive truncations; that is, the homology of $(U^{\ell}D_3) \cap AI_0^-$ for $-1\leq \ell \leq n(K)-1$ is generated by $U^{\ell+1}[e]$. Special mention is due of the case $(U^{-1}D_3) \cap AI_0^{-1}$, since this is the only case in the computation in which the product a negative power of $U$ with any of the three model complexes has nontrivial intersection with $AI_0^{-}$, namely $\{e\}$. In total the model complex $D_3$ contributes a summand $\mathbb F[U]_{(2n(K))}/(U^{n(K)+1})$ to the homology $H_*(AI_0^-)$ considered as an $\F[U]$-module, generated by the element $[e]$.

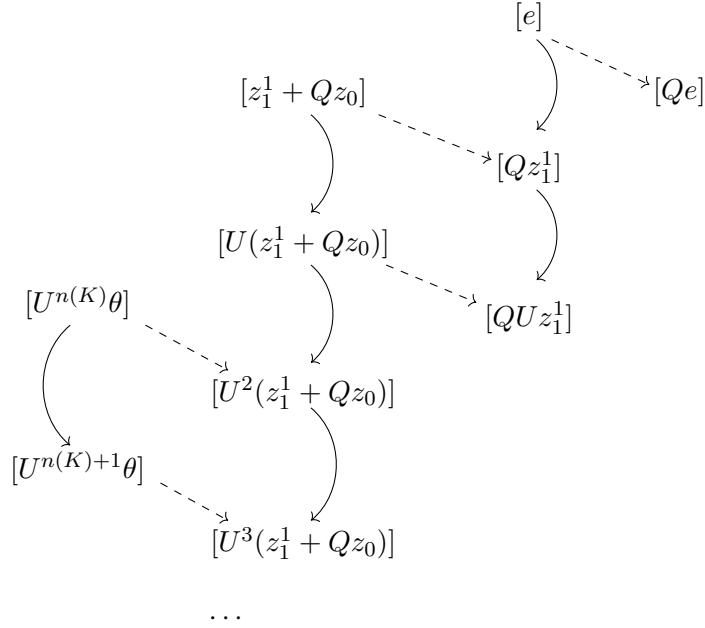
\begin{figure}
\centering
    \begin{tikzpicture}
    \node(0)at(7,5){$[Qe]$};
    \node(1)at(-1,2.2){$[U^{n(K)}\theta]$};
    \path[->][bend right = 50](-1.1,1.9)edge(-1.1,.3);
    \path[->][dashed](-.1,1.9)edge(1,1.3);
    \node(2)at(-1,0){$[U^{n(K)+1}\theta]$};
    \path[->][dashed](.1,-0.2)edge(1,-.7);
    \node(5)at(2,1){$[U^2(z_1^1+Qz_0)]$};
    \path[->][bend left = 50](2.1,.8)edge(2.1,-.7);
    \node(6)at(2,-1){$[U^3(z_1^1+Qz_0)]$};
    \node(8)at(1,-2){$\bf \cdots$};
    \node (9) at (2,3){$[U(z_1^1+Qz_0)]$ };
    \path[->][bend left = 50](2.1,2.7)edge(2.1,1.4);
    \node(10) at (2,5){$[z_1^1+Qz_0]$};
     \path[->][dashed](3,4.7)edge(4.5,4.1);
     \path[->][dashed](3.1,2.7)edge(4.3,2.2);
    \path[->][bend left = 50](2.1,4.7)edge(2.1,3.4);
    \node(4)at(5,6){$[e]$};
    \path[->][bend left = 50](5.1,5.7)edge(5.1,4.5);
    \path[->][dashed](5.3,5.7)edge(6.6,5.1);
    \node(5)at(5,4){$[Qz_1^1]$};
    \path[->][bend left = 50](5.1,3.7)edge(5.1,2.5);
    \node(6)at(5,2){$[QUz_1^1]$};
    \end{tikzpicture}
\caption{The homology of the complex $AI_0^-$ associated to $C_1\otimes F[U,U^{-1}]$. Curved lines denote the action of the variable $U$ and dashed lines denote the action of the variable $Q$. The example of $n(K)=2$, as in the case of $P(-2,5,5)$ is shown. The element $[U^{n(K)}\theta]$ lies in homological grading $1$, the elements $[z_1^1+Qz_0]$ and $Qe$ lie in homological grading $2n(K)$, and the element $[e]$ lies in homological grading $2n(K)+1$.}
\label{fig:C1final}
\end{figure}

To sum up, we conclude that as an $\F[U]$ module, the homology of the mapping cone is
\[
H_*(AI_0^-) \simeq \F_{(1)}[U] \oplus \F_{(2n(K))}[U] \oplus \F_{(2n(K)+1)}[U]/(U^{n(K)+1}) \oplus \F_{(2n(K))}.
\]
\noindent The summand $\F_{(1)}[U]$ is generated by $[U^{n(K)} \theta]$ and the summand $\F_{(2n(K))}[U]$ is generated by $[z_1^1+Qz_0]$, which notably has the property that $U^{n(K)}[z_1^1+Qz_0] = [QU^{n(K)}\theta]$. As for the torsion summands, the summand $\F_{(2n(K)+1)}[U]/(U^{n(K)+1})$ is generated by $[e]$, which notably has the property that $U[e]=Q[z_1^1]$, and finally the summand $\F_{(2n(K))}$ is generated by $[Qe]$. This module appears in Figure~\ref{fig:C1final}. This implies that
\begin{align*}
\Vl_0(K) &= -\frac{1}{2}(\gr([\theta])-1)\\
		&= -\frac{1}{2}(1-1) \\
		&=0
\end{align*}
\noindent and
\begin{align*}
\Vu_0(K) = -\frac{1}{2}(\gr([z_1^1+Qz_0])) = -\frac{1}{2}(2n(K)) =-n(K) =-\frac{m+n-2}{4}.
\end{align*}

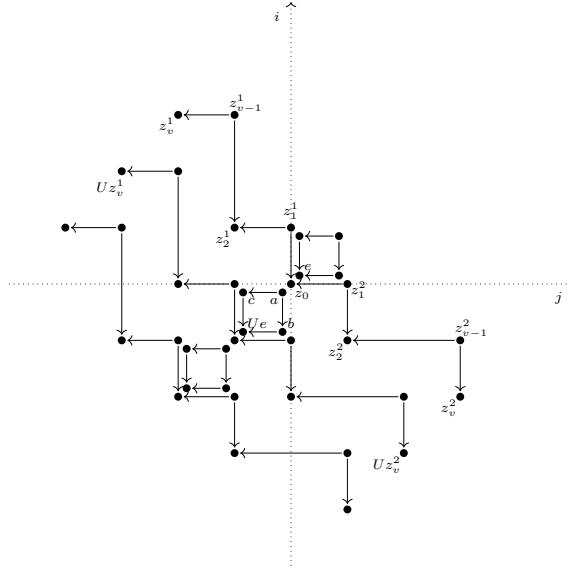
\begin{figure}[htb] 
\begin{center}
\scalebox{.75}{
\begin{tikzpicture}\tikzstyle{every node}=[font=\tiny] 
\path[->][dotted](0,-5)edge(0,5);
\path[->][dotted](-5,0)edge(5,0);

\node (1) at (-.25,4.75){$i$};
\node(2) at (4.75,-.25){$j$};

\fill(-3,2)circle [radius=2pt];
\fill(-2,2)circle [radius=2pt];
\fill(0,-2)circle [radius=2pt];
\fill(0,-1)circle [radius=2pt];
\fill(-1,-1)circle [radius=2pt];
\fill(-1,0)circle [radius=2pt];
\fill(-2,0)circle [radius=2pt];
\fill(2,-2)circle [radius=2pt];
\fill(2,-3)circle [radius=2pt];

\fill(-0.15,-0.15)circle [radius=2pt];
\fill(-0.15,-0.85)circle [radius=2pt];
\fill(-0.85,-0.15)circle [radius=2pt];
\fill(-0.85,-0.85)circle [radius=2pt];

\node(y1)at (2.8,-2.2){$z_v^2$};
\node(y2)at (3.2,-0.8){$z_{v-1}^2$};
\node(y3)at (-0.8,3.2){$z_{v-1}^1$};
\node(y4)at (-2.2,2.8){$z_v^1$};
\node(x11)at (0.8,-1.2){$z_2^2$};
\node(x33)at (-1.2,0.8){$z_2^1$};
\node(x1331)at (0.2,-0.2){$z_0$};
\node(x1223)at (1.2,-0.1){$z_1^2$};
\node(x3221)at (0,1.3){$z_1^1$};
\node(x13)at (0.3,0.3){$e$};
\node(x22)at (-0.3,-0.3){$a$};

\node(y1)at (1.7,-3.2){$Uz_v^2$};
\node(y4)at (-3.2,1.7){$Uz_v^1$};

\path[->](2,-2.1)edge(2,-2.9);
\path[->](-2.1,2)edge(-2.9,2);
\path[->](1.9,-2)edge(0.1,-2);
\path[->](-2,1.9)edge(-2,0.1);
\path[->](0,-1.1)edge(0,-1.9);
\path[->](-1.1,0)edge(-1.9,0);
\path[->](-0.1,-1)edge(-0.9,-1);
\path[->](-1,-0.1)edge(-1,-0.9);
\path[->](-0.25,-0.85)edge(-0.75,-0.85);
\path[->](-0.25,-0.15)edge(-0.75,-0.15);
\path[->](-0.85,-0.25)edge(-0.85,-0.75);
\path[->](-0.15,-0.25)edge(-0.15,-0.75);


\fill(-2.0,3.0)circle [radius=2pt];
\fill(-1.0,3.0)circle [radius=2pt];
\fill(1.0,-1.0)circle [radius=2pt];
\fill(1.0,-0.0)circle [radius=2pt];
\fill(-0.0,-0.0)circle [radius=2pt];
\fill(-0.0,1.0)circle [radius=2pt];
\fill(-1.0,1.0)circle [radius=2pt];
\fill(3.0,-1.0)circle [radius=2pt];
\fill(3.0,-2.0)circle [radius=2pt];

\fill(0.85,0.85)circle [radius=2pt];
\fill(0.85,0.15)circle [radius=2pt];
\fill(0.15,0.85)circle [radius=2pt];
\fill(0.15,0.15)circle [radius=2pt];

\node(x12)at (0,-0.7){$b$};
\node(x13)at (-.6,-0.7){$Ue$};
\node(x21)at (-.7,-0.3){$c$};

\path[->](3.0,-1.1)edge(3.0,-1.9);
\path[->](-1.1,3.0)edge(-1.9,3.0);
\path[->](2.9,-1.0)edge(1.1,-1.0);
\path[->](-1.0,2.9)edge(-1.0,1.1);
\path[->](1.0,-0.1)edge(1.0,-0.9);
\path[->](-0.1,1.0)edge(-0.9,1.0);
\path[->](0.9,-0.0)edge(0.1,-0.0);
\path[->](-0.0,0.9)edge(-0.0,0.1);
\path[->](0.75,0.15)edge(0.25,0.15);
\path[->](0.75,0.85)edge(0.25,0.85);
\path[->](0.15,0.75)edge(0.15,0.25);
\path[->](0.85,0.75)edge(0.85,0.25);


\fill(-4.0,1.0)circle [radius=2pt];
\fill(-3.0,1.0)circle [radius=2pt];
\fill(-1.0,-3.0)circle [radius=2pt];
\fill(-1.0,-2.0)circle [radius=2pt];
\fill(-2.0,-2.0)circle [radius=2pt];
\fill(-2.0,-1.0)circle [radius=2pt];
\fill(-3.0,-1.0)circle [radius=2pt];
\fill(1.0,-3.0)circle [radius=2pt];
\fill(1.0,-4.0)circle [radius=2pt];

\fill(-1.15,-1.15)circle [radius=2pt];
\fill(-1.15,-1.85)circle [radius=2pt];
\fill(-1.85,-1.15)circle [radius=2pt];
\fill(-1.85,-1.85)circle [radius=2pt];

\path[->](1.0,-3.1)edge(1.0,-3.9);
\path[->](-3.1,1.0)edge(-3.9,1.0);
\path[->](0.9,-3.0)edge(-0.9,-3.0);
\path[->](-3.0,0.9)edge(-3.0,-0.9);
\path[->](-1.0,-2.1)edge(-1.0,-2.9);
\path[->](-2.1,-1.0)edge(-2.9,-1.0);
\path[->](-1.1,-2.0)edge(-1.9,-2.0);
\path[->](-2.0,-1.1)edge(-2.0,-1.9);
\path[->](-1.25,-1.85)edge(-1.75,-1.85);
\path[->](-1.25,-1.15)edge(-1.75,-1.15);
\path[->](-1.85,-1.25)edge(-1.85,-1.75);
\path[->](-1.15,-1.25)edge(-1.15,-1.75);
\end{tikzpicture}
}
\end{center}
\caption{The dual $\overline C_1$ of the model complex $C_1$.}
\label{fig:C1dual}
\end{figure}

We now consider the dual $\overline{C}_1$ of the model complex $C_1$. This complex, which appears in Figure \ref{fig:C1dual}, has generators $a,b,c,e,z_0$ and $z_r^s$ for $1\leq r\leq v$ and $s \in \{1,2\}$. The homological gradings are determined by the requirement that $H_*(C\{i=0\})\simeq \mathbb F_{(0)}$, which specifies that $\gr(U^{-n(K)}z_v^1)=0$, which implies that for example $\gr(z_0)=-2n(K)$. The nonzero differentials are as follows:
 \begin{alignat*}{3}
        &\partial(a)=b+c  & \qquad\qquad & \qquad\qquad &      &\partial(b)=\partial(c) =Ue \\
        &\partial(z_r^s)=z_{r-1}^s+z_{r+1}^s, \ r\text{ is odd}  & & &      &\
\end{alignat*} 
Up to change of basis, the unique involution $\iota_K$ squaring to the Sarkar map is given by:
  \begin{alignat*}{3}
        &\iota_K(a)=a+z_0  & \qquad\qquad & \qquad\qquad &      &\iota_K(b)=c \\
        &\iota_K(z_0)=z_0+e  & & &      &\iota_K(c)=b  \\
        &\iota_K(z_1^1)=z_1^2+U^{-1}b & & & & \iota_K(z_1^2)=z_1^1+U^{-1}c \\
        & \iota_K(z_r^1)=z_r^2, \ 1<r\leq v & & & & \iota_K(z_r^2)=z_r^1, \ 1\leq r \leq v \\
        &\iota_K(e)=e & & & & \ 
    \end{alignat*}
We begin by considering the $\mathbb F[U]$-complex of $\overline{C}_1\otimes \mathbb F[U,U^{-1}]$ lying in the third quadrant, again called by convention $A_0^-$, and shown in Figure~\ref{fig:DualAMinus}. We see this complex is generated over $\mathbb F[U]$ by the set
\[\{a, b, c, e, z_0, Uz_1^s, Uz_2^s, U^2z_3^s, \dots, U^{n(K)-1}z_{v-3}^s, U^{n(K)-1}z_{v-2}^s, U^{n(K)+1}z_{v-1}^s, U^{n(K)+1}z_{v}^s\}\]
where $s=1,2$ as usual, and has differentials inherited from the differentials on $\overline{C}_1$. The homology of $A_0^-$ is isomorphic to
$$\F_{(-2n(K))}\oplus\F_{(-2n(K))}[U]$$
where the $\F_{(-2n(K))}$ summand is has an $\F$-basis $[e]$ and the $\F_{(-2n(K))}[U]$ summand is generated over $\F[U]$ by $[z_0]$. We therefore have $V_0(\overline{K})=n(K) = \frac{m+n-2}{4}$.

\begin{figure}[htb] 
\begin{center}
\scalebox{.75}{
\begin{tikzpicture}\tikzstyle{every node}=[font=\tiny] 
\path[->][dotted](0,-6)edge(0,2);
\path[->][dotted](-6,0)edge(2,0);

\fill(0,-2)circle [radius=2pt];
\fill(0,-1)circle [radius=2pt];
\fill(-1,-1)circle [radius=2pt];
\fill(-1,0)circle [radius=2pt];
\fill(-2,0)circle [radius=2pt];

\fill(-0.15,-0.15)circle [radius=2pt];
\fill(-0.15,-0.85)circle [radius=2pt];
\fill(-0.85,-0.15)circle [radius=2pt];
\fill(-0.85,-0.85)circle [radius=2pt];

\node(x1331)at (0.2,-0.2){$z_0$};
\node(x13)at (0.3,0.3){$e$};

\path[->](0,-1.1)edge(0,-1.9);
\path[->](-1.1,0)edge(-1.9,0);
\path[->](-0.1,-1)edge(-0.9,-1);
\path[->](-1,-0.1)edge(-1,-0.9);
\path[->](-0.25,-0.85)edge(-0.75,-0.85);
\path[->](-0.25,-0.15)edge(-0.75,-0.15);
\path[->](-0.85,-0.25)edge(-0.85,-0.75);
\path[->](-0.15,-0.25)edge(-0.15,-0.75);


\fill(-0.0,-0.0)circle [radius=2pt];

\fill(0.15,0.15)circle [radius=2pt];

\node(x22)at (-0.3,-0.3){$a$};
\node(x12)at (0,-0.7){$b$};
\node(x13)at (-.6,-0.7){$Ue$};
\node(x21)at (-.7,-0.3){$c$};
\node(x11)at (-0.3,-2.2){$Uz_2^2$};
\node(x33)at (-2.3,-0.2){$Uz_2^1$};
\node(x1223)at (0.3,-1.1){$Uz_1^2$};
\node(x3221)at (-1,0.2){$Uz_1^1$};


\fill(-1.0,-3.0)circle [radius=2pt];
\fill(-1.0,-2.0)circle [radius=2pt];
\fill(-2.0,-2.0)circle [radius=2pt];
\fill(-2.0,-1.0)circle [radius=2pt];
\fill(-3.0,-1.0)circle [radius=2pt];

\fill(-1.15,-1.15)circle [radius=2pt];
\fill(-1.15,-1.85)circle [radius=2pt];
\fill(-1.85,-1.15)circle [radius=2pt];
\fill(-1.85,-1.85)circle [radius=2pt];

\path[->](-1.0,-2.1)edge(-1.0,-2.9);
\path[->](-2.1,-1.0)edge(-2.9,-1.0);
\path[->](-1.1,-2.0)edge(-1.9,-2.0);
\path[->](-2.0,-1.1)edge(-2.0,-1.9);
\path[->](-1.25,-1.85)edge(-1.75,-1.85);
\path[->](-1.25,-1.15)edge(-1.75,-1.15);
\path[->](-1.85,-1.25)edge(-1.85,-1.75);
\path[->](-1.15,-1.25)edge(-1.15,-1.75);


\fill(-5.0,0.0)circle [radius=2pt];
\fill(-4.0,0.0)circle [radius=2pt];
\fill(-2.0,-4.0)circle [radius=2pt];
\fill(-2.0,-3.0)circle [radius=2pt];
\fill(-3.0,-3.0)circle [radius=2pt];
\fill(-3.0,-2.0)circle [radius=2pt];
\fill(-4.0,-2.0)circle [radius=2pt];
\fill(0.0,-4.0)circle [radius=2pt];
\fill(0.0,-5.0)circle [radius=2pt];

\fill(-2.15,-2.15)circle [radius=2pt];
\fill(-2.15,-2.85)circle [radius=2pt];
\fill(-2.85,-2.15)circle [radius=2pt];
\fill(-2.85,-2.85)circle [radius=2pt];

\node(y2)at (0.2,-3.8){$U^{n(K)+1}z_{v-1}^2$};
\node(y3)at (-3.8,0.2){$U^{n(K)+1}z_{v-1}^1$};
\node(y1)at (-0.6,-5.3){$U^{n(K)+1}z_v^2$};
\node(y4)at (-5.2,-0.3){$U^{n(K)+1}z_v^1$};

\path[->](0.0,-4.1)edge(0.0,-4.9);
\path[->](-4.1,0.0)edge(-4.9,0.0);
\path[->](-0.1,-4.0)edge(-1.9,-4.0);
\path[->](-4.0,-0.1)edge(-4.0,-1.9);
\path[->](-2.0,-3.1)edge(-2.0,-3.9);
\path[->](-3.1,-2.0)edge(-3.9,-2.0);
\path[->](-2.1,-3.0)edge(-2.9,-3.0);
\path[->](-3.0,-2.1)edge(-3.0,-2.9);
\path[->](-2.25,-2.85)edge(-2.75,-2.85);
\path[->](-2.25,-2.15)edge(-2.75,-2.15);
\path[->](-2.85,-2.25)edge(-2.85,-2.75);
\path[->](-2.15,-2.25)edge(-2.15,-2.75);

\node(ellipsis1)at(-5,-3){$\iddots$};
\node(ellipsis2)at(-4,-4){$\iddots$};
\node(ellipsis3)at(-3,-5){$\iddots$};
\end{tikzpicture}
}
\end{center}
\caption{The $\F[U]$-submodule $A_0^-$ of $\overline C_1 \otimes \mathbb F[U,U^{-1}]$.}
\label{fig:DualAMinus}
\end{figure}

\noindent Now consider the mapping cone $\Cone((\overline{C}_1\otimes \F[U,U^{-1}]) \xrightarrow{Q(1+\iota_K)} Q(\overline{C}_1\otimes \F[U,U^{-1}])[-1])$. This mapping cone comes to us with a set of generators \[\{e, a, b, c, z_0, z^s_1,\dots, z^s_{v}, Qe, Qa, Qb, Qc, Qz_0, Qz^s_1, \dots, Qz^s_{v}\}\] for $s=1,2$ as usual. We do a change of basis to replace this by the set of generators 
\begin{align*}&\{e, a, b, b+c, z_0, z^1_1, z_1^1+z_1^2 ,\dots, z_1^v, z^1_{v}+z^2_v, \\ &\qquad Qe, Qa, Qb, Q(b+c), Qz_0, Qz^1_1, Q(z_1^1+z_1^2), \dots, Qz^1_{v}, Q(z^1_v+z_v^2)\}.\end{align*}
As in the previous case, this breaks up into direct summands over $\F[U,U^{-1}]$ each of which is the tensor product of a model $\mathbb F$-complex over $\F[U,U^{-1}]$. These model complexes are shown in Figure~\ref{fig:DualConeComplexes}. The first, $E_1$, has generators
\[\{a,b+c,Qz_0, Qz_1^1, \dots, Qz_v^1\}\]
and nonzero differentials $\partial^{\iota} = \partial + Q(1+\iota_K)$
\begin{alignat*}{3}
        &\partial^{\iota}(a) = (b+c)+Qz_0   \\
         & \partial^{\iota}(Qz_r^1)=Qz_{r-1}^1+Qz_{r+1}^1, \ \ 1 \leq r < v, \ r\text{ odd.}
  \end{alignat*}
The homology of the complex $E_1$ is one-dimensional with basis $[b+c]=[Qz_0]$. The tensor product $E_1 \otimes \F[U,U^{-1}]$ therefore contributes an $\mathbb F[U,U^{-1}]$ summand to the homology of the mapping cone generated by $[b+c]=[Qz_0]$.

Next we consider the model complex $E_2$, which has generators
\[ \{ Qe, QU^{-1}c, z_0, z_1^1, \dots, z_v^1, Qz_1^1+Qz_1^2, \dots, Qz_v^1, Qz_v^2\}\]
and nonzero differentials
\begin{alignat*}{3}
        & \partial^{\iota}(QU^{-1}c)= Qe \\
        & \partial^{\iota}(z_0)= Qe \\
        & \partial^{\iota}(z_1^1)= z_0 + z^1_2 + Q(z_1^1 + z^2_1) + QU^{-1}c \\
         &\partial^{\iota}(z_r^1)=\begin{cases}(Q(z_r^1+z_r^2), \ \ 1<r \leq v, \ r\text{ even}\\ z^1_{r-1} + z^1_{r+1} + Q(z_r^1+z_r^2), \ \ 1<r < v, \ r\text{ odd}\end{cases} \\
         &\partial^{\iota}(Q(z_r^1+z_r^2))= Q(z_{r-1}^1+z_{r-1}^2)+Q(z_{r+1}^1+z_{r+1}^2), \ \ 1<r<v,  \ r\text{ odd}
  \end{alignat*}
The homology of this complex is one-dimensional with basis $[z_0+QU^{-1}c]$. So, the tensor product $E_1\otimes \F[U,U^{-1}]$ contributes an $\mathbb F[U,U^{-1}]$ summand to the homology of the mapping cone.

Finally we consider the third model complex $E_3$, which has generators
\[\{e, U^{-1}c, QU^{-1}a, QU^{-1}(b+c), z_1^1+z_1^2, \dots, z_v^1+z_v^2\}\]
and nonzero differentials 
\begin{alignat*}{3}
        &\partial^{\iota}(QU^{-1}a)= QU^{-1}(b+c)  \\
        & \partial^{\iota}(U^{-1}c) = e+QU^{-1}(b+c) \\
                 & \partial^{\iota}(z_1^1+z_1^2)=z_2^1 + z_2^2 + QU^{-1}(b+c)\\
          & \partial^{\iota}(z_r^1+z_r^2)= (z^1_{r-1} + z^2_{r-1}) + (z^1_{r+1} + z^2_{r+1}), \ \ 1<r <v, \ r\text{ odd}
  \end{alignat*}
\noindent This final model complex is acyclic and therefore $E_3 \otimes \F[U,U^{-1}]$ does not contribute anything to the total mapping cone.

\begin{figure}[htb] 
\begin{center}
\begin{tikzpicture}\tikzstyle{every node}=[font=\tiny]

\node(Qb)at(6.0,-0.0){$b+c$};
\node(QUe)at(4.5,1.5){$a$};
\node(z0)at(3.0,-0.0){$Qz_0$};
\node(zs)at(1.5,1.5){$Qz_1^1$};
\node(zs)at(-0.0,-0.0){$Qz_2^1$};
\node(zs)at(-1.5,1.5){$Qz_3^1$};
\node(dots)at(-3.0,-0.0){$\dots$};
\node(zs)at(-4.5,1.5){$Qz_{v-1}^1$};
\node(zs)at(-6.0,-0.0){$Qz_v^1$};

\path[->](4.8,1.2)edge(5.7,0.3);
\path[->](4.2,1.2)edge(3.3,0.3);
\path[->](1.8,1.2)edge(2.7,0.3);
\path[->](1.2,1.2)edge(0.3,0.3);
\path[->](-1.2,1.2)edge(-0.3,0.3);
\path[->](-1.8,1.2)edge(-2.55,0.45);
\path[->](-4.2,1.2)edge(-3.45,0.45);
\path[->](-4.8,1.2)edge(-5.7,0.3);
\end{tikzpicture}
\end{center}
\vspace{5mm}
\begin{center}
\begin{tikzpicture}\tikzstyle{every node}=[font=\tiny]

\node(nd)at(4.5,-1.5){$Qe$};
\node(nd)at(5,-0.375){$QU^{-1}c$};
\node(nd)at(3.75,-0.375){$z_0$};
\node(nd)at(2.8,-0.375){$Q(z_1^1+z_1^2)$};
\node(nd)at(3.0,0.75){$z_1^1$};
\node(nd)at(1.5,-1.6){$Q(z_2^1+z_2^2)$};
\node(nd)at(1.5,-0.375){$z_2^1$};
\node(nd)at(-0.0,-0.375){$Q(z_3^1+z_3^2)$};
\node(nd)at(-0.0,0.75){$z_3^1$};
\node(nd)at(-1.5,-1.6){$Q(z_4^1+z_4^2)$};
\node(nd)at(-1.5,-0.375){$z_4^1$};
\node(nd)at(-3.0,-0.375){$\dots$};
\node(nd)at(-4.2,-0.375){$Q(z_{v-1}^1+z_{v-1}^2)$};
\node(nd)at(-3.0,0.75){$\dots$};
\node(nd)at(-4.5,0.75){$z_{v-1}^1$};
\node(nd)at(-6.0,-1.5){$Q(z_v^1+z_v^2)$};
\node(nd)at(-6.0,-0.375){$z_v^1$};

\path[->](4.9,-0.575)edge(4.6,-1.35);
\path[->](3.0,0.45)edge(3.0,-0.2);
\path[->](1.5,-0.55)edge(1.5,-1.2);
\path[->](-0.0,0.45)edge(-0.0,-0.2);
\path[->](-1.5,-0.55)edge(-1.5,-1.2);
\path[->](-4.5,0.45)edge(-4.5,-0.2);
\path[->](-6.0,-0.55)edge(-6.0,-1.2);
\path[->](3.3,0.6)edge(4.6,-.1);
\path[->](2.7,0.6)edge(1.65,-0.05);
\path[->](0.3,0.6)edge(1.35,-0.05);
\path[->](-0.3,0.6)edge(-1.35,-0.05);
\path[->](-2.7,0.6)edge(-1.65,-0.05);
\path[->](-4.8,0.6)edge(-5.85,-0.05);
\path[->](2.7,-0.6)edge(1.65,-1.35);
\path[->](0.3,-0.6)edge(1.35,-1.35);
\path[->](-0.3,-0.6)edge(-1.35,-1.35);
\path[->](-2.7,-0.6)edge(-1.65,-1.35);
\path[->](-4.8,-0.6)edge(-5.85,-1.35);
\path[->](3.9,-0.575)edge(4.35,-1.35);
\path[->](3.15,0.6)edge(3.6,-0.15);
\end{tikzpicture}
\begin{center}
\vspace{5mm}
\end{center}
\begin{tikzpicture}\tikzstyle{every node}=[font=\tiny]

\node(Qa)at(4.5,-0.0){$e$};
\node(Qbc)at(3.0,1.5){$U^{-1}c$};
\node(b)at(1.5,-0.0){$QU^{-1}(b+c)$};
\node(Ue)at(1.5,1.5){$QU^{-1}a$};
\node(Qz11)at(-0.0,1.5){$z_1^1+z_1^2$};
\node(Qz21)at(-1.5,-0.0){$z_2^1+z_2^2$};
\node(dots)at(-3.0,1.5){$\dots$};
\node(Qzv1)at(-4.5,-0.0){$z_v^1+z_v^2$};

\path[->](3.3,1.2)edge(4.2,0.3);
\path[->](2.7,1.2)edge(1.8,0.3);
\path[->](1.5,1.2)edge(1.5,0.3);
\path[->](0.3,1.2)edge(1.2,0.3);
\path[->](-0.3,1.2)edge(-1.2,0.3);
\path[->](-2.7,1.2)edge(-1.8,0.3);
\path[->](-3.3,1.2)edge(-4.2,0.3);
\end{tikzpicture}
\end{center}
\caption{Model complexes for direct summands appearing in the mapping cone $\Cone(\overline C_1\otimes \F[U,U^{-1}] \xrightarrow{Q(1+\iota_K)} Q(\overline C_1 \otimes \F[U,U^{-1}])[-1])$. From top to bottom, the model complexes are denoted $E_1$, $E_2$, and $E_3$.}
\label{fig:DualConeComplexes}
\end{figure}
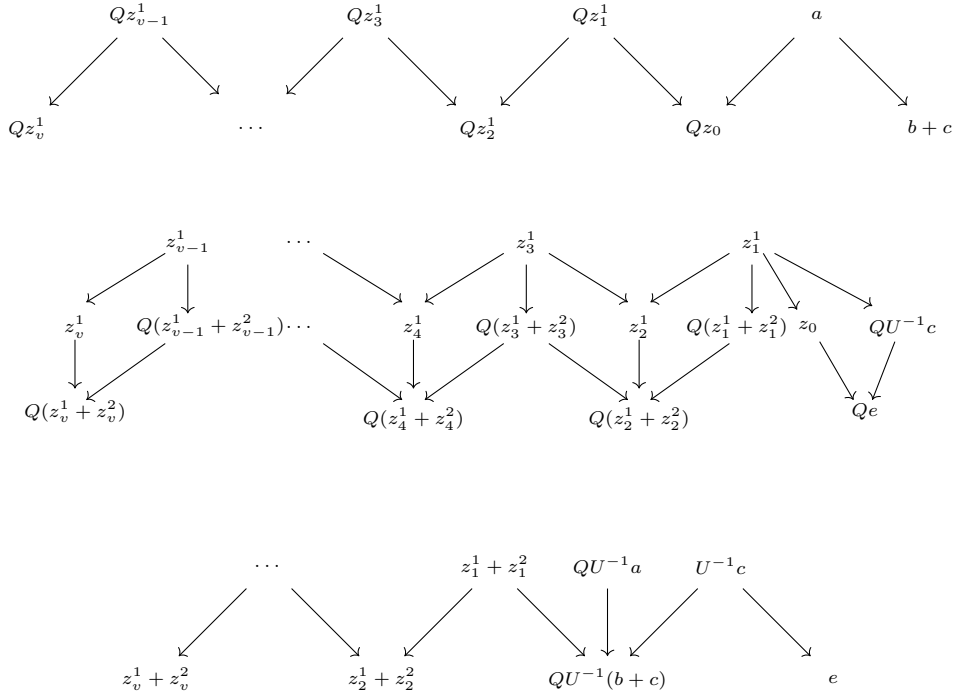

We now consider the homology of the mapping cone $AI_0^-$. We start by considering the intersection $(E_1 \otimes \F[U,U^{-1}])\cap AI_0^-$. We note that $(U^{m}E_1) \cap AI_0^- = U^mE_1$ for all $m\geq n(K)+1$, and has one-dimensional homology with basis $[U^m(b+c)]=[U^mQz_0]$. Subsequently lowering the power of $U$ by one has at each step the effect of either preserving the complex or of removing pairs of generators $z^1_k$ and $z^1_{k-1}$ for $k$ even (and changing the exponent on $U$); in particular, none of these things alters the fact that the homology of $(U^{\ell}D_1) \cap AI_0^-$ is one-dimensional with basis $[U^{\ell}Qz_0]=[U^{\ell}(b+c)]$ for $\ell \geq 0$. So, $(E_1 \otimes \F[U,U^{-1}])\cap AI_0^-)$ contributes a summand $\mathbb F_{-2n(K)}[U]$ generated by $[Qz_0]$ to $H_*(AI_0^-)$ considered as an $\mathbb F[U]$-module.

We now consider the intersection $(E_2 \otimes \F[U,U^{-1}])\cap AI_0^-$. We observe that the intersection $(U^mE_2)\cap AI_0^- = U^mE_2$ for all $m\geq n(K)+1$, and has one-dimensional homology with basis $U^m[z_0+QU^{-1}c]$. The intersection $(U^{n(K)}E_2) \cap AI_0^-$ is obtained by truncating the four elements $z_v^1,Q(z_v^1+z_v^2), z^1_{v-1}, Q(z_{v-1}^1+z_{v-1}^2)$ and multiplying everything by $U^{n(K)+1}$; this does not change the homology of the complex except for the power of $U$; it is still generated over $\F$ by $U^{n(K)}[z_0+QU^{-1}c]$. Successive truncations either preserve the complex from the previous step or delete four elements $z_k^1,Q(z_k^1+z_k^2), z_k^1, Q(z_k^1+z_k^2)$ for $k$ even (and change the power of $U$); none of this changes the homology of $(U^{\ell}E_2)\cap AI_0^-$ apart from the power of $U$, and it continues to be generated by $U^{\ell}[z_0+QU^{-1}c]$ for $\ell \geq 1$. For the final nontrivial intersection $E_2 \cap AI_0^-$, we are left with a complex generated by $z_0$ and $e$, which is acyclic. So the intersection $(E_2 \otimes \F[U,U^{-1}])\cap AI_0^-$ contributes a summand $\mathbb F_{-2n(K)-1}[U]$ generated by $[Uz_0+Qc]$ to the homology $H_*(AI_0^-)$ considered as an $\mathbb F[U]$-module.

Finally we consider the intersection $(E_3 \otimes \F[U,U^{-1}])\cap AI_0^-$. For $m \geq n(K)+1$, the intersection $(U^{m}E_3) \cap AI_0^- = U^mE_3$ and is acyclic. To obtain $(U^{n(K)}E_3) \cap AI_0^-$, we truncate $z_v^1+z^2_v$ and $z_{v-1}^1+z^2_{v-1}$ and multiply the remaining basis elements by $U^{n(K)}$, obtaining a complex which remains acyclic. Successive truncations either preserve the complex up to changing the power of $U$ or remove pairs of elements $z_k^1+z_k^2$ and $z_{k-1}^1+z^2_{k-1}$ for $k$ even, leaving the complex acyclic. This persists until we reach $(U^0E_3) \cap AI_0^- = E_3 \cap AI_0^-$, which consists solely of the element $e$, and has one dimensional homology over $\F$ generated by the element $[e]$. So $(E_3 \otimes \F[U,U^{-1}])\cap AI_0^-$ contributes a summand $\F_{(-2n(K)+1}$ with basis $[e]$ to the homology of $AI_0^-$ considered as an $\F[U]$-module.

\begin{figure}
\centering
   \begin{tikzpicture}
    \node(0)at(4,2){$[e]$};
    \node(1)at(0,0.2){$[Uz_0 + Qc]$};
    \path[->][bend right = 50](-.1,-.1)edge(-.1,-1.7);
    \path[->][dashed](.1,-.1)edge(1.4,-0.9);
    \node(2)at(0,-2){$[U^2z_0 + QUc]$};
    \path[->][bend right = 50](-.1,-2.3)edge(-.1,-3.8);
    \path[->][dashed](.1,-2.2)edge(1.4,-3);
    \node(3)at(0,-4){$[U^3z_0 + QU^2c]$};
    \path[->][dashed](.1,-4.2)edge(1.4,-5);
    \node(4)at(2,1){$[Qz_0]$};
    \path[->][bend left = 50](2.1,0.7)edge(2.1,-0.7);
    \node(5)at(2,-1){$[QUz_0]$};
    \path[->][bend left = 50](2.1,-1.2)edge(2.1,-2.7);
    \node(6)at(2,-3){$[QU^2x]$};
    \path[->][bend left = 50](2.1,-3.2)edge(2.1,-4.7);
    \node(7)at(2,-5){$[QU^3x]$};
    \node(8)at(1,-6){$\bf \cdots$};
    \end{tikzpicture}
\caption{The homology of the complex $AI_0^-$ associated to $\overline{C}_1\otimes\F[U,U^{-1}]$. Curved lines denote the action of the variable $U$ and dashed lines denote the action of the variable $Q$. The element $[Uz_0+Qc]$ lies in homological grading $-2n(K)-1$, the element $[Qz_0]$ lies in homological grading $-2n(K)$, and the element $[e]$ lies in homological grading $-2n(K)+1$. The case of $n(K)=2$ as for the mirror of $P(-2,5,5)$ is shown.}
\label{fig:C1dualfinal}
\end{figure}
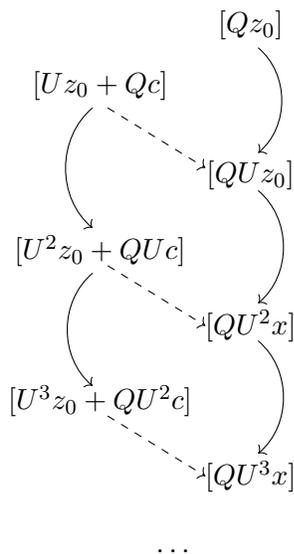

Summing up, we see that
\[H_*(AI_0^-) \simeq\F_{(-2n(K)-1)}[U]\oplus\F_{(-2n(K))}[U] \oplus \F_{(-2n(K)+1)}\]
where the $\F_{(-2n(K)-1)}[U]$ summand is generated by $[U(z_2^1+Q(z_1^1+z_1^2))]=U[z_0+QU^{-1}c]$, the $\F_{(-2n(K))}[U]$ summand is generated by $[b+c]=[Qz_0]$, and the $\F_{(-2n(K)+1)}$ summand has basis $[e]$. This module is shown in Figure~\ref{fig:C1dualfinal}. We therefore see that 
\begin{align*}
\Vl_0(\overline{K}) &= -\frac{1}{2}\left(\gr(U[z_0+QU^{-1}c]) - 1\right)\\
					&= -\frac{1}{2}(-2n(K)-1-1)\\
					&=n(K)+1\\
					&=\frac{m+n+2}{4}
\end{align*}
and
\[
\Vu_0(\overline{K}) = -\frac{1}{2}\left(\gr([Qz_0])\right)= -\frac{1}{2}(-2n(K))=n(K)=\frac{m+n-2}{4}
\]
\noindent as promised.\end{proof}

\bibliographystyle{amsalpha}
\bibliography{bib}

\end{document}